\documentclass[10pt,reqno]{amsart}
\usepackage{geometry}
\usepackage[colorlinks=true, linkcolor=blue, citecolor=magenta, urlcolor=cyan]{hyperref}

\usepackage{amsmath,amssymb,amsthm}
\usepackage{graphicx}
\usepackage{microtype}
\usepackage{xcolor}
\numberwithin{equation}{section}

\newtheorem{theorem}{Theorem}[section]
\newtheorem{lemma}[theorem]{Lemma}
\newtheorem{proposition}[theorem]{Proposition}

\theoremstyle{definition}

\theoremstyle{remark}

\newcommand{\od}{\mathsf{D}}

\newcommand{\dx}{\,\mathrm{d}x}
\newcommand{\dy}{\,\mathrm{d}y}

\usepackage[misc]{ifsym}
\allowdisplaybreaks

\title[Multiplicity of linear elliptic problem with nonlocal
nonlinearities]{Multiplicity and solution construction in linear elliptic problems
with nonlocal nonlinearities}

\author{Chiun-Chang Lee}
\address{ Institute for Computational and Modeling Science, National Tsing Hua University, Hsinchu 300, Taiwan \texttt{lee2@mx.nthu.edu.tw}}

\begin{document}

\begin{abstract} 
This work investigates linear elliptic equations with multiple nonlocal nonlinearities in a bounded domain, which takes the form
\begin{equation*}
-\nabla\cdot(\od\nabla u) +\lambda fu= \eta \sum_{j=1}^kh_j\boldsymbol{\mathsf{N}}_j[u]+h_0.
\end{equation*}
Here $\lambda$ and $\eta$ are positive parameters, and $\boldsymbol{\mathsf{N}}_j[u]$ represents a nonlocal term dependent on the unknown solution~$u$. All coefficients are defined in the context of a wide range of applications. As such equations generally lack a variational structure, a new approach is developed that combines fixed-point arguments with asymptotic techniques. This method establishes the existence and multiplicity of solutions under specific conditions. Of particular interest is the role of the nonlocal nonlinearities, which lead to diverse structures of solutions. To the best of our knowledge, this property is a new finding not typically observed in related nonlocal elliptic problems. 
\end{abstract}

\keywords{Non-variational elliptic problem, Nonlocal nonlinearity, Multiplicity,   Fixed point argument, Asymptotic analysis}

\subjclass[2020]{35A01, 35J15, 35C20}

\maketitle

\section{\bf  Introduction} 
Early work on nonlocal equations can be traced back to a study by Liouville in 1837 (cf. \cite{L1837}), who investigated a parabolic equation with a nonlocal term depending on an integral of the unknown solution. The corresponding stationary solution also satisfies a nonlocal elliptic equation. Since then, the study of nonlocal elliptic equations has evolved significantly. These models are fundamental in many fields of science, {\color{black} as they often arise as stationary solutions of evolution equations with long-range interactions or spatial averaging, and appear in applications ranging from physics and electrochemistry to population dynamics and material systems with memory}~\cite{B1987,BTV1967,BDS1993,Lee2016,Lee2019,MF2013,N2006,PS2018}. The diverse nature of these applications has motivated a rich body of research on the  uniqueness, multiplicity and qualitative properties of solutions to such equations. Nevertheless, to the best of our knowledge, these topics in recent related works are not yet fully settled, even for linear elliptic equations with nonlocal~terms.

This paper aims to contribute to the understanding of these topics by investigating a class of linear elliptic equations involving nonlocal nonlinearities in a bounded smooth  domain~$\Omega \subset \mathbb{R}^m$ with $m \geq 1$. We begin by introducing the following equation with a {\bf single nonlocal term}~$\boldsymbol{\mathsf{N}}[\,\cdot\,]$ and positive parameters $\lambda$ and $\eta$:
\begin{equation}\label{eq-u}
    -\nabla\cdot(\od(x)\nabla u) +\lambda f(x)u=   \eta h(x)\boldsymbol{\mathsf{N}}[u] \quad\text{in}\,\,\Omega,
\end{equation}
 subject to the Robin boundary condition
\begin{equation}\label{rbd}
 u+b(x) \frac{\partial u}{\partial{\vec{\nu}}}  = g(x)\quad\text{on}\,\,\partial\Omega.
\end{equation}
Equation \eqref{eq-u} can be viewed as a linear inhomogeneous equation with a nonlocal coefficient. The solution $u=u_{\lambda,\eta}$ depends on $\lambda$ and $\eta$, but we omit the subscript for simplicity. Here, $\nabla$ and $\nabla\cdot$ denote the gradient and divergence operators in $\mathbb{R}^m$, respectively. 
$\vec{\nu}$ is the unit outward normal vector to $\partial\Omega$. The matrix-valued function~$\od = \left(D_{ij}\right)_{m\times m}$ defined on $\overline{\Omega}$ is symmetric and satisfies a uniform ellipticity condition, with each entry $D_{ij} \in\mathrm{C}^1( \overline{\Omega}; \mathbb{R})$. In Sections~\ref{sub2-1} and \ref{sub2-2}, we first provide a thorough argument to establish the existence of multiple solutions to a special form of \eqref{eq-u}--\eqref{rbd}.

We are particularly interested in elliptic equations with {\bf multiple nonlocal nonlinearities}, as they are crucial for modeling systems where the steady-state behavior is not determined by local information alone, and the influencing factors are related through complex nonlinear relationships. In plasma physics, a single particle's behavior is influenced not only by its immediate surroundings but also by the overall charge distribution throughout the plasma. This is a classic example of a {\bf nonlocal effect}. When simultaneously considering the complex {\bf nonlinear interactions} between multiple species, such as electrons and ions, the resulting equations contain {\bf multiple nonlocal nonlinear terms}. For a more detailed look, the reader can refer to the papers mentioned in the first paragraph and the references therein.

Based on the insights from Sections~\ref{sub2-1}--\ref{sub2-2}, we extend our study to the more general equation $-\nabla\cdot(\od(x)\nabla u) +\lambda f(x)u= \eta \sum_{j=1}^kh_j(x)\boldsymbol{\mathsf{N}}_j[u]+h_0(x)$ (cf. \eqref{eq-u2}) with $k$ multiple nonlocal terms ($k\geq1$). This investigation and its main results are detailed in Section~\ref{sec2.3}.

%Precise assumptions on these variable coefficients will be specified in \eqref{dhb}, following a discussion of the background for \eqref{eq-u}--\eqref{rbd}.
\subsection{Setting of the problem}
Nonlocal elliptic equations inherently present challenges for rigorous mathematical analysis. A key difficulty is that the maximum principle generally fails in the presence of a nonlocal term in~\eqref{eq-u}, in contrast to some local-type elliptic equations where the maximum principle holds under appropriate assumptions~\cite{L2024}. To gain insight into the nonlocal effect, we assume that the {\it local part} of \eqref{eq-u} satisfies the maximum principle. This allows us to proceed with the following assumptions on the coefficients $\od=\left(D_{ij}\right)_{\color{black}{m\times m}}$, $f$, $h$, $b$ and $g$: 
\begin{equation}\label{dhb}
\begin{cases}
   \text{\bf(a).}\,\,(\text{uniform\,\, ellipticity})\,\, D_{ij}\in\text{C}^{1}(\overline{\Omega};\mathbb{R}),\,\,D_{ij}(x)=D_{ji}(x),    \\
 \hspace*{25pt}\sum\limits_{i,j=1}^mD_{ij}(x)z_iz_j\geq\mathfrak{D}|z|^2,\,\,{\forall}x\in\overline{\Omega}\,\,\text{and}\,\,z=(z_1,...,z_m)\in\mathbb{R}^m,\\
    \hspace*{25pt}\text{where}\,\, \mathfrak{D}\,\, \text{is\, a\, positive\, constant};\\
    \text{\bf(b).}\,\,f\in\text{C}^{0}(\overline{\Omega};(0,\infty)),\,\,\,h\in\text{C}^{0}(\overline{\Omega};\mathbb{R})\,\,\text{with}\,\,h\not\equiv0,\\
 \hspace*{28pt}b\in\text{C}^{0}(\partial\Omega;[0,\infty))\,\,\text{and}\,\,g\in\text{C}^{0}(\partial\Omega;\mathbb{R}).
\end{cases}
\end{equation}

Equation~\eqref{eq-u} is a type of elliptic functional differential equations~(cf. \cite{Sk1997}). In particular, equation~\eqref{eq-u} becomes an integro-differential equation when the nonlocal term~$\boldsymbol{\mathsf{N}}[u]$ is represented by an integral involving the unknown solution~$u$ \cite{FS1998,FY2015}. This type of coupling often leads to qualitative behavior that differs significantly from the purely local case. Inspired by the study of integro-differential equations, particularly those of Fredholm type~\cite{CLW2025,W2011}, we specifically focus on two distinct types for the nonlocal term~$\boldsymbol{\mathsf{N}}[u]$:
\begin{equation}\label{N-2t}
 \boldsymbol{\mathsf{N}}[u]=   \begin{cases}
 \displaystyle\int_\Omega w(y)\mathcal{N}(u(y))\dy&(\text{type~I});\vspace{3pt} \\
 \displaystyle\mathcal{N}(\int_\Omega w(y)u(y)\dy)\qquad&(\text{type~II}),
    \end{cases}
\end{equation} 
where $w$ satisfies $\int_\Omega w(y)\dy\neq0$. These two types of nonlocal terms are of particular interest as they introduce global dependencies into equation~\eqref{eq-u}. Their fundamental structural differences lead to a wide range of applications. For instance, a case related to the Type I nonlocal term in \eqref{N-2t} was studied by Allegretto and Barabanova \cite{AB1996}, specifically for nonlocal equations with separable kernels of the form $h(x)\int_{\Omega}w(y)u(y)\,\text{d}y$. In particular, the assumption that $\int_\Omega w(y)\dy\neq0$ is crucial to guarantee a non-trivial nonlocal effect. The setting in \eqref{N-2t} also includes special cases of the average integral, such as
\begin{equation*}
\frac1{|\Omega|}\int_\Omega\mathcal{N}(u(y))\dy\quad\text{and}\quad\mathcal{N}\left(\frac1{|\Omega|}\int_{\Omega}u(y)\dy\right),
\end{equation*}
where $|\Omega|$ is the usual Lebesgue measure of $\Omega$ in $\mathbb{R}^m$. For the subsequent analysis, we require the following assumptions on the functions defined in \eqref{N-2t}:
\begin{itemize}
    \item[\bf(A1).] $\mathcal{N}:\mathbb{R}\to\mathbb{R}$  is locally Lipschitz continuous.
        \item[\bf(A2).] The weight function $w\in\mathrm{L}^p(\Omega)$ for some $p>1$ and its values on $\Omega$ are not restricted to a single sign.  Without loss of generality, we normalize $w$ by setting $\int_\Omega w(y)\dy=1$.
\end{itemize}

It is worth noting that for some special cases, e.g., when $f\equiv h$ and $\lambda\neq\eta$, equation~\eqref{eq-u}--\eqref{rbd} can be transformed into a linear elliptic equation with an integral type Robin boundary condition~\cite{AR1981}. Specifically, if we further assume $\mathcal{N}(s)=s$, one can define an auxiliary function $U=u-\frac\eta\lambda\boldsymbol{\mathsf{N}}[u]$. We then obtain $\boldsymbol{\mathsf{N}}[U]=(1-\frac\eta\lambda)\boldsymbol{\mathsf{N}}[u]$ such that $U$ satisfies the equation~$-\nabla\cdot(\od(x)\nabla U) +\lambda f(x)U=0$, in $\Omega$, with the  nonlocal boundary condition $U+b(x) \frac{\partial U}{\partial{\vec{\nu}}}  = g(x)+\frac\eta{\eta-\lambda}\boldsymbol{\mathsf{N}}[U]$ on $\partial\Omega$. The similar argument can also be found in \cite{CLM2024,L2023,L2025,LM2023}. However, our focus in this paper is on a more general problem, and we will not pursue this specific transformation here.

\subsection{Equations without variational structure}
When the positive parameter $\eta$ vanishes, \eqref{eq-u} is reduced to a linear elliptic equation, which serves as a classical reference case. However, the nonlocal nature of the terms present in \eqref{N-2t} generally means that equation~\eqref{eq-u}--\eqref{rbd} does not have a variational structure. This poses a significant challenge for the use of standard classical tools from the calculus of variations, such as the direct method. Consequently, a different approach is necessary to establish the existence and multiplicity of solutions. In this paper, we overcome this challenge by developing a novel approach that combines fixed-point arguments with asymptotic techniques, which allows us to directly study the non-variational problem.

While the existence of multiple solutions for semilinear elliptic equations has been extensively studied using variational or topological methods, relatively fewer results are available when nonlocal terms are present. In particular, the influence of nonlocality on the multiplicity structure of solutions remains an open and interesting question in many settings. The aim of this paper is to understand under what conditions nonlocal effects may lead to the existence of multiple steady states, and how the structure of the nonlocal coupling influences this phenomenon.

 Note that the nonlocal term $\boldsymbol{\mathsf{N}}[u]$ in \eqref{eq-u} can be viewed as a parameter-like coefficient that depends on the unknown solution $u$. If we replace $\boldsymbol{\mathsf{N}}[u]$ with any given constant, the resulting linear elliptic problem with the boundary condition~\eqref{rbd} admits a unique solution due to the positivity of $f$ and nonnegativity of $b$. A fundamental problem is to determine the conditions under which equation~\eqref{eq-u}--\eqref{rbd} admits a unique solution or multiple solutions. 
 The main goal of this work can be summarized into the following two questions~(Q1) and (Q2):
\begin{itemize}
\item[\bf (Q1).] {\it Under what conditions on $\lambda$ and $\eta$ does the equation admit a unique solution?}
\item[\bf (Q2).] {\it Under what conditions on $\lambda$ and $\eta$ does the equation admit multiple solutions? In this case, does the nature of the nonlocal term $\boldsymbol{\mathsf{N}}[u]$ play a significant role in determining the number of solutions?}
\end{itemize}

We assume that $\mathcal{N}:\mathbb{R}\to\mathbb{R}$ is globally Lipschitz continuous to address question~(Q1) from a theoretical standpoint. Specifically, we show that the solution to \eqref{eq-u}--\eqref{rbd} is unique provided that $\frac\eta\lambda$ is sufficiently small.
This elementary result is stated as follows.
\begin{proposition}[Uniqueness]\label{prop1}
Assume \eqref{dhb}--\eqref{N-2t}, (A2), and that $\mathcal{N}:\mathbb{R}\to\mathbb{R}$ is globally Lipschitz continuous with Lipschitz constant $\mathcal{L}$. Let $|\Omega|$ denote the Lebesgue measure of $\Omega$ in $\mathbb{R}^m$. Then, for positive parameters $\lambda$ and $\eta$ satisfying
\begin{equation}\label{le=}    \frac{\eta}{\lambda}<\frac{|\Omega|^{\frac1p-1}\min_{\overline{\Omega}}f}{\mathcal{L}||w||_{\mathrm{L}^{p}(\Omega)}\max_{\overline{\Omega}}|h|},
\end{equation}
equation~\eqref{eq-u}--\eqref{rbd} has a unique solution $u\in\mathrm{C}^1(\overline{\Omega})\cap\mathrm{C}^2(\Omega)$.

In particular, if $b(x)\equiv0$ on $\partial\Omega$, then  \eqref{le=} can be sharpened to
\begin{equation}\label{=le}
    \frac{\eta}{\lambda}\left(1+\frac{\mathfrak{D}\Lambda_{\Omega}}{\lambda\min_{\overline{\Omega}}f}\right)^{\max\{\frac12,\frac1p\}-1}< \frac{|\Omega|^{\frac1p-1}\min_{\overline{\Omega}}f}{\mathcal{L}||w||_{\mathrm{L}^{p}(\Omega)}\max_{\overline{\Omega}}|h|},
\end{equation}
where $\Lambda_{\Omega}$ denotes the principal eigenvalue of the Dirichlet problem for $-\Delta$ on $\Omega$. 
\end{proposition}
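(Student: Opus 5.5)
The plan is a contraction-mapping argument for the scalar (or function) fixed point. For a constant $c\in\mathbb{R}$, let $v_c$ be the unique solution of the linear problem $-\nabla\cdot(\od\nabla v)+\lambda f v=\eta c\,h$ in $\Omega$, with $v+b\,\partial_{\vec\nu}v=g$ on $\partial\Omega$. Existence, uniqueness and $\mathrm{C}^1(\overline\Omega)\cap\mathrm{C}^2(\Omega)$ regularity follow from $f>0$, $b\ge0$ and standard elliptic theory. Then $u$ solves \eqref{eq-u}--\eqref{rbd} if and only if $u=v_c$ with $c=\boldsymbol{\mathsf{N}}[v_c]$. By linearity, $v_c=v_0+\eta c\,\phi$, where $\phi$ solves $-\nabla\cdot(\od\nabla\phi)+\lambda f\phi=h$ with the homogeneous Robin condition $\phi+b\,\partial_{\vec\nu}\phi=0$. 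Define $T(c)=\boldsymbol{\mathsf{N}}[v_0+\eta c\phi]$. Uniqueness and existence of solutions of the original problem are then equivalent to uniqueness and existence of fixed points of $T:\mathbb{R}\to\mathbb{R}$, and it suffices to show that $T$ is a contraction.

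The key estimate is $\max_{\overline\Omega}|\phi|\le \max|h|/(\lambda\min f)$. This follows from the maximum principle for the local operator. At an interior positive maximum of $\phi$, we have $-\nabla\cdot(\od\nabla\phi)\ge0$, so $\lambda f\phi\le h$. At a boundary maximum with $b>0$, we have $\partial_{\vec\nu}\phi\ge0$, hence $\phi=-b\partial_{\vec\nu}\phi\le0$. If $b=0$ there, then $\phi=0$. The usual comparison with constants $\pm M$, $M=\max|h|/(\lambda\min f)$, handles the non-smooth-maximum subtleties. Next, by the Lipschitz bound and Hölder's inequality:

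Type I:
\[
|T(c_1)-T(c_2)|\le \mathcal{L}\int_\Omega|w|\,\eta|c_1-c_2||\phi|\le \mathcal{L}\eta\|w\|_{\mathrm{L}^p}\|\phi\|_{\mathrm{L}^{p'}}|c_1-c_2|.
\]

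Type II: the same bound, with $|\int w(\cdot)|\le\|w\|_{\mathrm{L}^p}\|\cdot\|_{\mathrm{L}^{p'}}$.

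Here $\|\phi\|_{\mathrm{L}^{p'}}\le |\Omega|^{1-1/p}\max|\phi|$. The contraction constant is therefore at most $\frac{\eta}{\lambda}\mathcal{L}\|w\|_{\mathrm{L}^p}|\Omega|^{1-\frac1p}\frac{\max|h|}{\min f}$, which is $<1$ precisely under \eqref{le=}. Banach's fixed point theorem on $\mathbb{R}$ yields existence and uniqueness.

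For the sharpened bound when $b\equiv0$, I would bound $\|\phi\|_{\mathrm{L}^{p'}}$ more cleverly by interpolating between the $\mathrm{L}^\infty$ bound and an $\mathrm{L}^2$ energy bound. Testing the equation with $\phi\in H^1_0$ and using ellipticity together with the Poincaré inequality $\int|\nabla\phi|^2\ge\Lambda_\Omega\int\phi^2$ gives
\[
(\mathfrak{D}\Lambda_\Omega+\lambda\min f)\|\phi\|_{\mathrm{L}^2}^2\le \|h\|_{\mathrm{L}^2}\|\phi\|_{\mathrm{L}^2},
\]
so that $\|\phi\|_{\mathrm{L}^2}\le |\Omega|^{1/2}\max|h|/(\lambda\min f)\cdot(1+\frac{\mathfrak{D}\Lambda_\Omega}{\lambda\min f})^{-1}$. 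If $p'\ge2$ (that is, $p\le2$), interpolation gives $\|\phi\|_{\mathrm{L}^{p'}}\le\|\phi\|_\infty^{1-2/p'}\|\phi\|_{\mathrm{L}^2}^{2/p'}$. This produces the factor $(1+\cdots)^{-2/p'}=(1+\cdots)^{\frac2p-2}$; I would need to reconcile this with the stated exponent $\max\{\frac12,\frac1p\}-1$, possibly by a cruder choice. If $p'<2$ (that is, $p>2$), Hölder gives $\|\phi\|_{\mathrm{L}^{p'}}\le|\Omega|^{\frac1{p'}-\frac12}\|\phi\|_{\mathrm{L}^2}$ and the factor $(1+\cdots)^{-1}$. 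Getting exactly the exponent $\max\{\frac12,\frac1p\}-1$ means choosing the interpolation so that the gain power is $1-\max\{\frac12,\frac1p\}$. For $p\le2$ this power is $1-\frac1p$, which is weaker than the $2-\frac2p$ above and hence also valid, since the base is $\ge1$. For $p>2$ it is $\frac12$, again weaker than $1$. So the stated bound follows.

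The main obstacle is the rigorous maximum-principle bound for $\phi$ under the mixed Robin condition with merely continuous $b\ge0$ and $f$. I would handle it by the weak maximum principle applied to $(\phi-M)^+$ in the weak formulation: the boundary term $\int_{\partial\Omega}b^{-1}\phi(\phi-M)^+$ is nonnegative where $b>0$, and $(\phi-M)^+=0$ where $b=0$.
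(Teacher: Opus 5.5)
Your proposal is correct and follows the paper's proof. It uses the same ingredients: the scalar map $\Theta\mapsto\boldsymbol{\mathsf{N}}[v_\Theta]$, a maximum-principle $\mathrm{L}^\infty$ bound on $v_{\Theta_1}-v_{\Theta_2}=\eta(\Theta_1-\Theta_2)\phi$, H\"older's inequality and Banach's theorem, and, when $b\equiv0$, an energy/Poincar\'e $\mathrm{L}^2$ bound combined with H\"older ($p>2$) or $\mathrm{L}^2$--$\mathrm{L}^\infty$ interpolation ($p\le2$).

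The only deviation is in the energy estimate. You bound $\int_\Omega h\phi$ by Cauchy--Schwarz, whereas the paper bounds it by $\max|h|\,|\Omega|\max|\phi|$. Your version gives $\|\phi\|_{\mathrm{L}^2}$ the full factor $(1+\frac{\mathfrak{D}\Lambda_\Omega}{\lambda\min_{\overline{\Omega}}f})^{-1}$ instead of the paper's square root of it. Hence your contraction factor carries the exponent $2(\max\{\frac12,\frac1p\}-1)$. This is a slightly stronger condition, and it implies the stated sharpened condition because the base is at least $1$.
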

Although the proof of Proposition~\ref{prop1} is based on the standard contraction mapping theorem, to the best of our knowledge, a similar result is not available in the literature. Therefore, a detailed proof is provided in Section~\ref{APX} for the reader's convenience.

The existence and uniqueness of a solution to equation~\eqref{eq-u}--\eqref{rbd} is guaranteed by Proposition~\ref{prop1} when $\mathcal{N}:\mathbb{R}\to\mathbb{R}$ is globally Lipschitz continuous and the parameter condition \eqref{le=} is satisfied. {\color{black} Although} this approach provides a solid foundation, the global Lipschitz assumption is a restrictive condition for many applications, and we observe that a unique solution may still exist even when conditions \eqref{le=} or \eqref{=le} are not met. In our previous work~\cite{CLW2025}, we investigate the uniqueness and asymptotic behavior of solutions for a class of singularly perturbed linear Fredholm integro-differential equations. However, to date, to the best of our knowledge, we have only been able to obtain sufficient conditions for the uniqueness of solutions.

A more challenging and interesting problem is considered when $\mathcal{N}:\mathbb{R}\to\mathbb{R}$ fails to satisfy global Lipschitz continuity. Such a situation often gives rise to various types of solution, including the possibility of multiple solutions. Therefore, we turn our attention to the assumption~(A1) for $\mathcal{N}:\mathbb{R}\to\mathbb{R}$. (Note that (A1) still includes the global case.) We will focus on the question (Q2), and develop a new approach to investigate the existence of multiple solutions and explore the rich solution structures that can emerge. 
\subsection*{The plan of this paper} 
The remainder of this paper is organized as follows. Section~\ref{sec1-1} includes three subsections that present our proposed methodology and main {\color{black} results}. More precisely, in Sections~\ref{sub2-1}--\ref{sub2-2}, we investigate equation~\eqref{eq-u}--\eqref{rbd} with $f=h$. We clearly demonstrate how multiple solutions can arise for this type of nonlocal equation by providing a specific framework that connects the nonlocal nonlinear term to the existence of multiple solutions. Based on this, we proceed in Section~\ref{sec2.3} to consider the general case with multiple nonlocal nonlinear terms, concluding with a statement of our main results: Proposition~\ref{prop-t}, Theorem~\ref{thm0}, and Theorem~\ref{thm2}. Section~\ref{APX} contains the proof of Proposition~\ref{prop1}. The proofs of Proposition~\ref{prop-t} and Theorem~\ref{thm0} are given in Section~\ref{sec-thm0}, while the proof of Theorem~\ref{thm2} is provided in Section~\ref{sec-thm2}. Section~\ref{sec:conclusion} provides the concluding remarks of this work and discusses an outlook for our future research. Finally, the Appendix provides illustrative examples that demonstrate the validity of assumptions (A3) and (A6).

\section{\bf  Methodology and statement of the main result}\label{sec1-1} 
\subsection{The role of $\mathcal{N}$  on the multiplicity: A framework}\label{sub2-1}
We will present our approach systematically, starting with the special case where~$\lambda=\eta$ and $f=h$ in \eqref{eq-u}, which simplifies the equation to
\begin{equation}\label{eq2u}
-\nabla\cdot(\od(x)\nabla u) + \lambda f(x)(u-\boldsymbol{\mathsf{N}}[u] )=0\quad\text{in}\,\,\Omega.
\end{equation}
To analyze the possibility of multiple solutions to equation~\eqref{eq2u} with the boundary condition~\eqref{rbd}, we introduce two auxiliary equations:
\begin{equation}\label{eq-Phi}
    \begin{cases}
-\nabla\cdot(\od(x)\nabla \Phi_{\lambda}) + \lambda f(x)(\Phi_{\lambda}-1)=0 \quad&\text{in}\,\,\Omega,\vspace{3pt}\\ 
\displaystyle\Phi_{\lambda}+b(x) \frac{\partial\Phi_{\lambda}}{\partial{\vec{\nu}}}  = 0\quad&\text{on}\,\,\partial\Omega,
    \end{cases}
\end{equation}
and
\begin{equation}\label{eq-Psi}
    \begin{cases}
-\nabla\cdot(\od(x)\nabla \Psi_{\lambda}) + \lambda f(x)\Psi_{\lambda}=0 \quad&\text{in}\,\,\Omega,\vspace{3pt}\\ 
\displaystyle\Psi_{\lambda}+b(x) \frac{\partial\Psi_{\lambda}}{\partial{\vec{\nu}}}  = g(x)\quad&\text{on}\,\,\partial\Omega.
    \end{cases}
\end{equation}
Under the assumptions in \eqref{dhb}, for $\lambda>0$, both \eqref{eq-Phi} and \eqref{eq-Psi} admit unique solutions which, by the comparison theorem, satisfy
\begin{equation}\label{Phs}    0\leq\Phi_{\lambda}(x)\leq1\quad\text{and}\quad|\Psi_{\lambda}(x)|\leq\max_{\partial\Omega}|g|,\quad\forall x\in\overline{\Omega}.
\end{equation}
Note also that $\Phi_{\lambda}$ and $\Psi_{\lambda}$ are linearly independent. By linearity, a solution to \eqref{eq2u} and the boundary condition \eqref{rbd} can be expressed in the form of 
\begin{equation}\label{Nu}
    u=\boldsymbol{\mathsf{N}}[u]\Phi_{\lambda}+\Psi_{\lambda}.
\end{equation}

 By \eqref{N-2t}, \eqref{Nu} gives an {\bf implicit integral equation} for $u$. Even if $\mathcal{N}:\mathbb{R}\to\mathbb{R}$ is globally Lipschitz continuous, we are interested in the case where its Lipschitz constant fails to satisfy \eqref{le=} with $\lambda=\eta$ and $f\equiv h$. Although $\Phi_{\lambda}$ and $\Psi_{\lambda}$ have good property,  it is challenging to determine even the existence of the nonlocal term $\boldsymbol{\mathsf{N}}[u]$, let alone its uniqueness or multiplicity, as {\bf the properties of the function $\mathcal{N}$ are expected to play a crucial role.}

Our approach for proving the existence of multiple solutions provides a new perspective. We first assume that the algebraic equation $r=\mathcal{N}(r)$ has at least two distinct roots. By subsequently imposing specific conditions on each root, we are able to establish at least two distinct mappings from \eqref{Nu}.

Furthermore, applying asymptotic techniques, each of these mappings is shown to have a unique fixed point in an appropriate complete metric space as $\lambda>0$ is sufficiently large. These distinct fixed points then correspond to multiple solutions of equation~\eqref{eq2u} with the boundary condition~\eqref{rbd}. This approach highlights that the property of $\mathcal{N}:\mathbb{R}\to\mathbb{R}$ may affect the number of solutions to equation~\eqref{eq2u} with the boundary condition~\eqref{rbd}. 
 
 The detailed argument proceeds as follows. Firstly, we define a set $\boldsymbol{\mathrm{S}}_{\star}$ whose elements are the real roots of $r=\mathcal{N}(r)$ that satisfy specific conditions:
\begin{equation}\label{s0n}
\boldsymbol{\mathrm{S}}_{\star}= \left\{r\in\mathbb{R}\left|\,
    \begin{aligned}
        &r=\mathcal{N}(r)\,\,\text{and~there~exists}\,\,\delta_r>0~\text{depending on}~r\,\,\text{such\,\,that}\\
&
        |\mathcal{N}(s_1)-\mathcal{N}(s_2)|\leq\mathcal{L}_r|s_1-s_2|,\,\,
        \forall s_1,s_2\in[r-\delta_r,r+\delta_r].
    \end{aligned}\right.\right\},
\end{equation}
where the constant $\mathcal{L}_r$ satisfies
\begin{equation}\label{rrL}
0<\mathcal{L}_r<\frac{|\Omega|^{\frac1p-1}}{||w||_{\mathrm{L}^{p}(\Omega)}}.
\end{equation}
Here, by convention, we set $\frac1p=0$ if $p=\infty$. The following assumption is required for our subsequent analysis:
\begin{itemize}
    \item[\bf(A3).] The set  $\boldsymbol{\mathrm{S}}_{\star}$ has $n$ distinct elements $r_1<r_2<\cdots<r_n$, where $n\geq2$ and $n$ can be infinite. 
\end{itemize}
We provide some examples in Appendix (Section~\ref{BPX}) to demonstrate the validity of Assumption (A3). This section also includes examples for the special case where the set  $\boldsymbol{\mathrm{S}}_{\star}$ has infinitely many elements, as detailed in (B2)--(B4).

It is crucial to emphasize that not all roots satisfying $r=\mathcal{N}(r)$ belong to the set $\boldsymbol{\mathrm{S}}_{\star}$. Instead, each $r_i\in\boldsymbol{\mathrm{S}}_{\star}$ serves as a {\bf valid candidate} for constructing a solution to equation~\eqref{eq2u} with the boundary condition~\eqref{rbd} via fixed-point theory (cf. Figure~\ref{fig1}). It should be stressed that our fixed-point argument is not applicable to any root $r$ of the equation $r=\mathcal{N}(r)$ that does not belong to the set $\boldsymbol{\mathrm{S}}_{\star}$. The multiplicity of these solutions is then guaranteed by assumption~(A3), which provides the necessary conditions.

\begin{figure}[t] 
    \centering 
\includegraphics[width=0.66\textwidth]{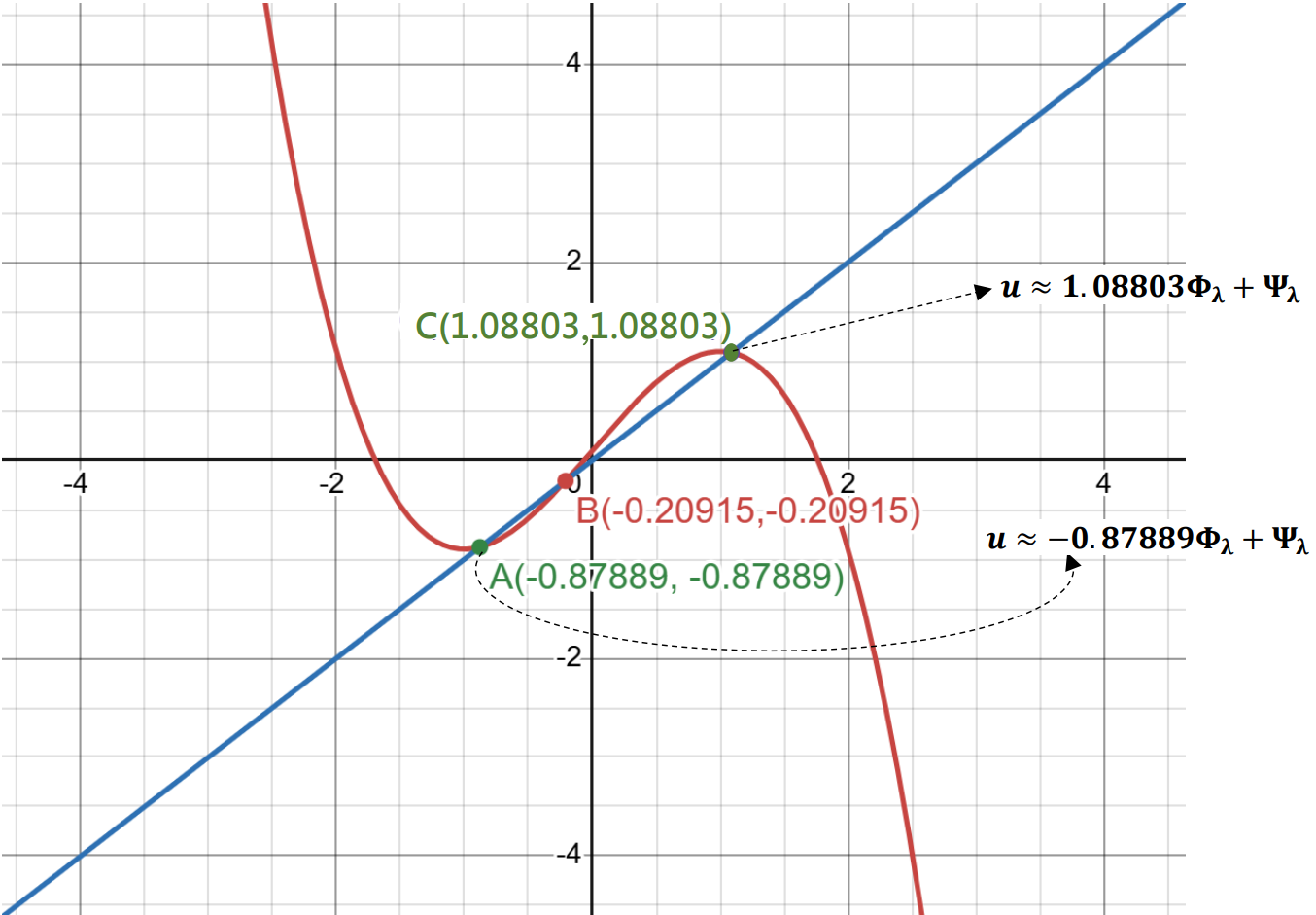} 
\caption{\footnotesize   The figure serves to illustrate that each point in the set $\boldsymbol{\mathrm{S}}_{\star}$ (cf. \eqref{s0n}) corresponds to a solution of the equation~\eqref{eq2u} with the boundary condition \eqref{rbd}. The figure shows the roots of the equation $r=\mathcal{N}(r)$ at three points, A, B, and C, in the coordinate plane, where $\mathcal{N}(r)=\frac{-5r^3+15r+1}{10}$. These roots have approximate values of $r_1\approx-0.87889$, $r_2\approx-0.20915$, and $r_3\approx1.0883$, and we have $|\mathcal{N}'(r_1)|, |\mathcal{N}'(r_3)|<1$ while $|\mathcal{N}'(r_2)|>1$ (also visible in the graph). For simplicity, we consider a case where $w(y)=\frac1{|\Omega|}$. This implies that \eqref{s0n} with  $\mathcal{L}_r\in(0,1)$ defines the set $\boldsymbol{\mathrm{S}}_{\star}=\{r_1,r_3\}$ (corresponding to the green intersection points A and C). By combining fixed-point arguments with asymptotic techniques, our new approach leverages the elements of $\boldsymbol{\mathrm{S}}_{\star}$ to establish a mapping corresponding to \eqref{Nu} and thereby prove the existence of a solution. This enables us to prove the existence of multiple solutions for equation \eqref{eq2u} with the boundary condition \eqref{rbd} when $\lambda$ is sufficiently large. It is important to note, however, that the fixed-point argument is only applicable to roots of the equation $r=\mathcal{N}(r)$ that belong to the set~$\boldsymbol{\mathrm{S}}_{\star}$.} 
    \label{fig1} 
\end{figure}

\subsection{Multiplicity result based on a fixed point argument}\label{sub2-2}
For each $r_i\in \boldsymbol{\mathrm{S}}_{\star}$, $i=1,...,n$, where $\mathcal{N}:\mathbb{R}\to\mathbb{R}$ satisfies (A3), we define a mapping $\boldsymbol{\mathsf{T}}_{i,\lambda}:\mathcal{I}_{i,\lambda}\to\mathcal{I}_{i,\lambda}$ as follows:

\begin{equation}\label{mapT} 
 \boldsymbol{\mathsf{T}}_{i,\lambda}(\theta)= \boldsymbol{\mathsf{N}}[(r_i+\theta)\Phi_{\lambda}+\Psi_{\lambda}]-r_i,\quad\theta\in\mathcal{I}_{i,\lambda},    
 \end{equation}
where $\mathcal{I}_{i,\lambda}\subset\mathbb{R}$ depending on $\lambda$ is a closed set to be determined. The rationale behind this construction is that a fixed point of the mapping $\boldsymbol{\mathsf{T}}_{i,\lambda}:\mathcal{I}_{i,\lambda}\to\mathcal{I}_{i,\lambda}$ (if it exists) corresponds to a solution of equation~\eqref{eq2u} with the boundary condition~\eqref{rbd}. Specifically, if \eqref{mapT} admits a fixed point $\theta_{i,\lambda}$, i.e.,
\begin{equation*}
    \boldsymbol{\mathsf{N}}[(r_i+\theta_{i,\lambda})\Phi_{\lambda}+\Psi_{\lambda}]=r_i+\theta_{i,\lambda}
\end{equation*}
  with $\theta_{i,\lambda}\in\mathcal{I}_{i,\lambda}$, then from \eqref{Nu} we can see that each $u_{i,\lambda}:=(r_i+\theta_{i,\lambda})\Phi_{\lambda}+\Psi_{\lambda}$ is a solution of equation~\eqref{eq2u} with the boundary condition~\eqref{rbd}. 

The main difficulty, therefore, lies in constructing appropriate closed sets $\mathcal{I}_{i,\lambda}\subset\mathbb{R}$ for each $i=1,...,n$. These sets must be chosen such that the mapping \eqref{mapT} admits a fixed point $\theta_{i,\lambda}\in\mathcal{I}_{i,\lambda}$, and all $r_i+\theta_{i,\lambda}$ remain distinct. Based on the asymptotic estimates of $\Phi_{\lambda}$ and $\Psi_{\lambda}$ as $\lambda\to\infty$ (established in Section~\ref{sec-hs}), we shall set $\mathcal{I}_{i,\lambda}=:[-\lambda^{\frac{1-p}{4p}}, \lambda^{\frac{1-p}{4p}}]$ for each $i=1,...,n$. We will then show that, under assumptions (A1)--(A3), the mapping~$\boldsymbol{\mathsf{T}}_{i,\lambda}$ defined by \eqref{mapT}  is a well-defined contraction from $[-\lambda^{\frac{1-p}{4p}}, \lambda^{\frac{1-p}{4p}}]$ into itself for all sufficiently large $\lambda>0$.

\begin{proposition}\label{prop-t}
Assume \eqref{dhb}--\eqref{N-2t} and (A1)--(A2), and that the set $\boldsymbol{\mathrm{S}}_{\star}$ defined in \eqref{s0n} satisfies (A3). Then for each $r_i\in\boldsymbol{\mathrm{S}}_{\star}$, there exists $\lambda_{i,*}>0$ such that for $\lambda>\lambda_{i,*}$, $\boldsymbol{\mathsf{T}}_{i,\lambda}$ defined in \eqref{mapT} with $\mathcal{I}_{i,\lambda}=:[-\lambda^{\frac{1-p}{4p}}, \lambda^{\frac{1-p}{4p}}]$ admits a unique fixed point $\theta_{i,\lambda}$. 
\end{proposition}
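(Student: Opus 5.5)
The plan is to apply the Banach contraction mapping theorem to $\boldsymbol{\mathsf{T}}_{i,\lambda}$ on the closed interval $\mathcal{I}_{i,\lambda}=[-\epsilon_\lambda,\epsilon_\lambda]$, where $\epsilon_\lambda:=\lambda^{\frac{1-p}{4p}}\to0$ as $\lambda\to\infty$. The key inputs are asymptotic estimates as $\lambda\to\infty$ for the auxiliary solutions of \eqref{eq-Phi} and \eqref{eq-Psi}. Since $f>0$, the function $\Phi_\lambda$ should tend to $1$ and $\Psi_\lambda$ to $0$ away from a boundary layer of width $O(\lambda^{-1/2})$, while \eqref{Phs} keeps both uniformly bounded. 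Quantitatively, I would first establish the $\mathrm{L}^{q}$ bounds
\begin{equation*}
\|1-\Phi_\lambda\|_{\mathrm{L}^{q}(\Omega)}+\|\Psi_\lambda\|_{\mathrm{L}^{q}(\Omega)}\le C\lambda^{-\frac{1}{2q}},\qquad q=\tfrac{p}{p-1}.
\end{equation*}
These would follow from interior exponential decay away from $\partial\Omega$, obtained with the barrier $e^{-c\sqrt{\lambda}\,\mathrm{dist}(x,\partial\Omega)}$ and the comparison principle, combined with the uniform bound \eqref{Phs} on the thin layer $\{\mathrm{dist}(x,\partial\Omega)<\lambda^{-1/2}\log\lambda\}$, whose measure is $O(\lambda^{-1/2}\log\lambda)$. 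Presumably this is the content of the estimates deferred to Section~\ref{sec-hs}, and any rate that beats $\epsilon_\lambda$ suffices.

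Next, fix $r_i\in\boldsymbol{\mathrm{S}}_\star$ with its $\delta_{r_i}$ and $\mathcal{L}_{r_i}$, and set $v_\theta:=(r_i+\theta)\Phi_\lambda+\Psi_\lambda$ for $\theta\in\mathcal{I}_{i,\lambda}$. Then $v_\theta-r_i=\theta\Phi_\lambda-r_i(1-\Phi_\lambda)+\Psi_\lambda$.

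\textbf{Type II.} Since $\int_\Omega w=1$, we have $\int_\Omega w v_\theta-r_i=\theta\int_\Omega w\Phi_\lambda+\int_\Omega w\bigl(\Psi_\lambda-r_i(1-\Phi_\lambda)\bigr)$. H\"older's inequality and the estimates above give
\begin{equation*}
\Bigl|\int_\Omega w v_\theta-r_i\Bigr|\le|\theta|\,|\Omega|^{1-\frac1p}\|w\|_{\mathrm{L}^{p}(\Omega)}+o(\epsilon_\lambda).
\end{equation*}
Note that $\int_\Omega w\Phi_\lambda\to1$, and the bound $|\Omega|^{1-\frac1p}\|w\|_{\mathrm{L}^{p}(\Omega)}\ge1$ is the one the hypotheses use. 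For large $\lambda$ the argument lies in $[r_i-\delta_{r_i},r_i+\delta_{r_i}]$. Using $r_i=\mathcal{N}(r_i)$, set $\kappa:=\mathcal{L}_{r_i}|\Omega|^{1-\frac1p}\|w\|_{\mathrm{L}^{p}(\Omega)}<1$ by \eqref{rrL}. Then
\begin{equation*}
|\boldsymbol{\mathsf{T}}_{i,\lambda}(\theta)|\le\kappa\epsilon_\lambda+o(\epsilon_\lambda)\le\epsilon_\lambda
\end{equation*}
for $\lambda$ large, so $\boldsymbol{\mathsf{T}}_{i,\lambda}$ maps $\mathcal{I}_{i,\lambda}$ into itself. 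Similarly,
\begin{equation*}
|\boldsymbol{\mathsf{T}}_{i,\lambda}(\theta_1)-\boldsymbol{\mathsf{T}}_{i,\lambda}(\theta_2)|\le\mathcal{L}_{r_i}\Bigl|\int_\Omega w\Phi_\lambda\Bigr|\,|\theta_1-\theta_2|\le\kappa|\theta_1-\theta_2|,
\end{equation*}
so $\boldsymbol{\mathsf{T}}_{i,\lambda}$ is a contraction.

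\textbf{Type I.} Here the difficulty is that $v_\theta(y)$ is not pointwise close to $r_i$ in the boundary layer, so the local Lipschitz bound cannot be applied everywhere. I would split $\Omega=\Omega_\lambda\cup(\Omega\setminus\Omega_\lambda)$, where $\Omega_\lambda$ is the interior region on which $|1-\Phi_\lambda|+|\Psi_\lambda|$ is exponentially small. On $\Omega_\lambda$ we have $|v_\theta-r_i|\le|\theta|+o(\epsilon_\lambda)<\delta_{r_i}$, so the $\mathcal{L}_{r_i}$ bound applies there. On the layer, $v_\theta$ is uniformly bounded, so by (A1) $\mathcal{N}$ is Lipschitz there with some constant $M$. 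The layer therefore contributes at most
\begin{equation*}
M\,\|w\|_{\mathrm{L}^{p}(\Omega)}\,|\Omega\setminus\Omega_\lambda|^{1-\frac1p},
\end{equation*}
both to $|\boldsymbol{\mathsf{T}}_{i,\lambda}(\theta)|$ (using $\mathcal{N}(r_i)=r_i$) and, with an additional factor $|\theta_1-\theta_2|$, to the difference quotient. The factor $|\Omega\setminus\Omega_\lambda|^{1-\frac1p}=O\bigl((\lambda^{-1/2}\log\lambda)^{\frac{p-1}{p}}\bigr)$ is $o(\epsilon_\lambda)$, because the exponent $\frac{1-p}{4p}$ is exactly half of $\frac{1-p}{2p}$. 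This is precisely why the radius $\lambda^{\frac{1-p}{4p}}$ is chosen.

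The interior contribution is bounded by $\kappa|\theta|+o(\epsilon_\lambda)$ in the self-map estimate and by $\kappa|\theta_1-\theta_2|$ in the contraction estimate. Hence for $\lambda>\lambda_{i,*}$ the map is a self-map with contraction constant $\kappa+o(1)<1$. The Banach fixed point theorem on the complete space $\mathcal{I}_{i,\lambda}$ then gives the unique fixed point $\theta_{i,\lambda}$. The main obstacle I expect is the quantitative boundary-layer estimate, especially for Robin data with $b$ allowed to vanish on parts of $\partial\Omega$, where the barrier construction needs care.
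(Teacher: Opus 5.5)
Your proposal is correct and follows essentially the same route as the paper. The shared steps are: barrier decay estimates for $\Phi_\lambda$ and $\Psi_\lambda$ (the paper's \eqref{ch-9}--\eqref{ch-10}); H\"older's inequality plus integrated exponential decay for Type II (\eqref{ch-19}--\eqref{ch-20}); a boundary-layer/interior split for Type I, using only the local Lipschitz constant from (A1) inside the layer; and the Banach fixed point theorem on $[-\lambda^{\frac{1-p}{4p}},\lambda^{\frac{1-p}{4p}}]$.

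The differences are cosmetic. The paper uses the layer $\Omega_{\lambda^{-1/3}}$, outside of which the decay really is $\exp(-\mathfrak{M}_0\lambda^{1/6}/2)$. Outside your layer $\{\mathsf{d}<\lambda^{-1/2}\log\lambda\}$ one only gets $\lambda^{-\mathfrak{M}_0/2}$, which is polynomial rather than exponential. This is harmless, but to reach your $\mathrm{L}^q$ rate you need either a large constant in front of the logarithm or the coarea bound \eqref{ch-20}. Your concern about $b$ vanishing is unnecessary: the comparison only uses that the barrier equals $1$ on $\partial\Omega$, together with the bounds \eqref{Phs}, which hold for every $b\geq0$.
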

The proof of Proposition~\ref{prop-t} is {\color{black} given} in Section~\ref{sec-propt}.

Proposition~\ref{prop-t} guarantees the existence of a unique fixed point~$\theta_{i,\lambda}\in[-\lambda^{\frac{1-p}{4p}}, \lambda^{\frac{1-p}{4p}}]$. This, in turn, demonstrate that for sufficiently large  $\lambda>0$, equation~\eqref{eq2u} with the boundary condition \eqref{rbd} has multiple solutions. Our first main result on the multiplicity and qualitative properties of solutions is stated as follows.

\begin{theorem}\label{thm0}
Under the assumptions \eqref{dhb}--\eqref{N-2t} and (A1)--(A3), {\color{black} equation~\eqref{eq2u} with the boundary condition~\eqref{rbd} admits multiple solutions associated with $\lambda_{i,*}$ and the fixed points $\theta_{i,\lambda}$ obtained in Proposition~\ref{prop-t}. More precisely, the following properties (i) and (ii) hold:}
\begin{itemize}
    \item[(i)]  If $\boldsymbol{\mathrm{S}}_{\star}=\{r_1,...,r_n\}$ is a finite set {\color{black} with the ordering~$r_1<r_2<\cdots<r_n$}, then there exists $\lambda_{*}\geq\max\{\lambda_{i,*}|\,i=1,...,n\}$ such that for $\lambda\geq\lambda_{*}$,  the equation has at least $n$ distinct solutions given explicitly by  
\begin{equation}\label{2.3u}  
 u_{i,\lambda}=(r_i+\theta_{i,\lambda})\Phi_{\lambda}+\Psi_{\lambda}\in\mathrm{C}^1(\overline{\Omega})\cap\mathrm{C}^2(\Omega),\,\,\text{for}\quad\.i=1,...,n,  
\end{equation}  
where $\Phi_{\lambda}$ and $\Psi_{\lambda}$ are the unique classical solutions of \eqref{eq-Phi} and \eqref{eq-Psi}, respectively. 
For each solution~$u_{i,\lambda}$, we have the following asymptotic behavior:
\begin{equation}\label{u-1as}
 |\boldsymbol{\mathsf{N}}[u_{i,\lambda}]-r_i| +\max_{\overline{\Omega}}  |u_{i,\lambda}-(r_i\Phi_{\lambda}+\Psi_{\lambda})|\xrightarrow{\lambda\to\infty}0.
\end{equation}
Moreover, if $b(x)>0$ on $\partial\Omega$, there holds $u_{1,\lambda}(x)<\cdots<u_{n,\lambda}(x)$ for $x\in\overline{\Omega}$.
\item[(ii)] If $\boldsymbol{\mathrm{S}}_{\star}$ has infinitely many distinct elements, then for any given $\widetilde{n}\in\mathbb{N}$, there exists $\lambda_{\widetilde{n}}>0$ such that for $\lambda\geq\lambda_{\widetilde{n}}$, the equation possesses at least $\widetilde{n}$ distinct solutions.
\end{itemize}
\end{theorem}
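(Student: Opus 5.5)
The plan is to take Proposition~\ref{prop-t} as given. For each $r_i\in\boldsymbol{\mathrm{S}}_{\star}$ and $\lambda>\lambda_{i,*}$ it supplies a fixed point $\theta_{i,\lambda}\in[-\lambda^{\frac{1-p}{4p}},\lambda^{\frac{1-p}{4p}}]$. We define $u_{i,\lambda}$ by \eqref{2.3u}. The fixed-point identity gives $\boldsymbol{\mathsf{N}}[u_{i,\lambda}]=r_i+\theta_{i,\lambda}$. Hence $u_{i,\lambda}=\boldsymbol{\mathsf{N}}[u_{i,\lambda}]\Phi_\lambda+\Psi_\lambda$, and by linearity of \eqref{eq-Phi}--\eqref{eq-Psi} this $u_{i,\lambda}$ solves \eqref{eq2u} with \eqref{rbd}. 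Its regularity $\mathrm{C}^1(\overline{\Omega})\cap\mathrm{C}^2(\Omega)$ is inherited from $\Phi_\lambda$ and $\Psi_\lambda$.

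For the asymptotics \eqref{u-1as}, we use $p>1$, so $\lambda^{\frac{1-p}{4p}}\to0$. Then $|\boldsymbol{\mathsf{N}}[u_{i,\lambda}]-r_i|=|\theta_{i,\lambda}|\to0$. Using $0\le\Phi_\lambda\le1$ from \eqref{Phs}, we get $\max_{\overline{\Omega}}|u_{i,\lambda}-(r_i\Phi_\lambda+\Psi_\lambda)|=|\theta_{i,\lambda}|\max\Phi_\lambda\le|\theta_{i,\lambda}|\to0$.

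For distinctness in case (i), let $d=\min_{i}(r_{i+1}-r_i)>0$ and choose $\lambda_*\ge\max_i\lambda_{i,*}$ with $2\lambda_*^{\frac{1-p}{4p}}<d$. Then for $\lambda\ge\lambda_*$ the numbers $\rho_i:=r_i+\theta_{i,\lambda}$ are strictly increasing in $i$. Suppose $u_{i,\lambda}=u_{j,\lambda}$. Then $(\rho_i-\rho_j)\Phi_\lambda\equiv0$. But $\Phi_\lambda\not\equiv0$, since $\Phi_\lambda\equiv0$ would force $\lambda f\equiv0$ in \eqref{eq-Phi}, contradicting $f>0$. So $\rho_i=\rho_j$, i.e.\ $i=j$.

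For the ordering when $b>0$, note $u_{i+1,\lambda}-u_{i,\lambda}=(\rho_{i+1}-\rho_i)\Phi_\lambda$ with $\rho_{i+1}-\rho_i>0$. It therefore suffices to show $\Phi_\lambda>0$ on $\overline{\Omega}$. This is the main point needing care. From \eqref{Phs}, $\Phi_\lambda\ge0$. Moreover $-\nabla\cdot(\od\nabla\Phi_\lambda)+\lambda f\Phi_\lambda=\lambda f>0$, so the strong maximum principle excludes an interior zero. Now suppose $\Phi_\lambda(x_0)=0$ at some $x_0\in\partial\Omega$. By the Hopf lemma (valid since $\partial\Omega$ is smooth and $\Phi_\lambda\not\equiv0$), $\partial\Phi_\lambda/\partial\vec\nu(x_0)<0$. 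Then the boundary condition gives $0=\Phi_\lambda(x_0)+b(x_0)\partial_{\vec\nu}\Phi_\lambda(x_0)<0$ because $b(x_0)>0$, a contradiction. Hence $\Phi_\lambda>0$ on $\overline{\Omega}$ and $u_{1,\lambda}<\cdots<u_{n,\lambda}$. (When $b$ vanishes somewhere, $\Phi_\lambda=0$ there, which is why the strict ordering is only claimed for $b>0$.)

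For case (ii), given $\widetilde n$, pick any $\widetilde n$ elements of $\boldsymbol{\mathrm{S}}_{\star}$ and order them. Applying the argument of (i) to this finite subfamily, with $\lambda_{\widetilde n}$ chosen as there from their $\lambda_{i,*}$ and minimal gap, yields $\widetilde n$ distinct solutions. Case (ii) only uses finitely many roots at a time, so no uniformity of $\lambda_{i,*}$ in $i$ is needed.
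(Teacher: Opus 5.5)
Your proposal is correct and follows essentially the paper's proof: you take the fixed points from Proposition~\ref{prop-t}, use the bound $2|\theta_{i,\lambda}|\le 2\lambda^{\frac{1-p}{4p}}$ for \eqref{u-1as}, impose the gap condition $2\lambda_*^{\frac{1-p}{4p}}<\min_i(r_{i+1}-r_i)$ to separate the $r_i+\theta_{i,\lambda}$, combine Hopf's lemma with the Robin condition and $b>0$ to get $\min_{\overline{\Omega}}\Phi_\lambda>0$, and pass to a finite subfamily for (ii). The only cosmetic difference is that the paper applies Hopf to $1-\Phi_\lambda$ at its boundary maximum and reads off $\min_{\overline{\Omega}}\Phi_\lambda=b\,\partial_{\vec\nu}(1-\Phi_\lambda)>0$ directly, whereas you argue by contradiction at a hypothetical boundary zero of $\Phi_\lambda$ (and use only $\Phi_\lambda\not\equiv0$ for distinctness).
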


Theorem~\ref{thm0}(ii) states that  equation~\eqref{eq2u} with the boundary condition~\eqref{rbd} can admit an arbitrary finite number of distinct solutions. In this context, both the function~$\mathcal{N}$ and the parameter $\lambda$ play a crucial role in determining the number of solutions. This result represents a distinctive characteristic of this class of linear equations with nonlocal nonlinearity. The proof of Theorem~\ref{thm0} is {\color{black} given} in Section~\ref{thmc}.
\subsection{On the general case of multiple nonlocal nonlinearities}\label{sec2.3} Based on the approach established in Sections~\ref{sub2-1}--\ref{sub2-2}, the main purpose of this study is to generalize \eqref{eq-u} to a linear elliptic equation with multiple nonlocal terms, which is given by
\begin{equation}\label{eq-u2}
    -\nabla\cdot(\od(x)\nabla u) +\lambda f(x)u=   \eta \sum_{j=1}^kh_j(x)\boldsymbol{\mathsf{N}}_j[u]+h_0(x) \quad\text{in}\,\,\Omega,
\end{equation}
where $k\geq1$ and we make the following assumptions:
\begin{itemize}
    \item[\bf (A4).] For $j=0,1,...,k$, the functions $h_j\in\text{C}^{0}(\overline{\Omega};\mathbb{R})$ and $\frac{h_j}f\in\text{C}^{2}(\overline{\Omega};\mathbb{R})$. In addition, there exists an interior point $\boldsymbol{\mathrm{x}}_0\in\Omega$ where the set $\{h_1(\boldsymbol{\mathrm{x}}_0),...,h_k(\boldsymbol{\mathrm{x}}_0)\}$ is linearly independent.
  \item[\bf (A5).] Let $w_j\in\mathrm{L}^p(\Omega)$ for some $p>1$ be such that $\int_\Omega w_j(y)\dy=1$. Let $\mathcal{N}_j:\mathbb{R}\to\mathbb{R}$ be locally Lipschitz continuous. We define the nonlocal term $\boldsymbol{\mathsf{N}}_j[u]$ analogously to \eqref{N-2t} as follows:
\begin{equation}\label{N-2j}
\boldsymbol{\mathsf{N}}_j[u]= \begin{cases}
\displaystyle\int_\Omega w_j(y)\mathcal{N}_j(u(y))\dy & (\text{type~I});\vspace{3pt} \\
\displaystyle\mathcal{N}_j(\int_\Omega w_j(y)u(y)\dy) & (\text{type~II}).
\end{cases}
\end{equation}
Furthermore, we assume that the set of functions $\{w_1,...,w_k\}$ is linearly independent on $\overline{\Omega}$. Similarly, the set of functions $\{\mathcal{N}_1,...,\mathcal{N}_k\}$ is also linearly independent on $\mathbb{R}$. These assumptions guarantee that the nonlocal terms $\boldsymbol{\mathsf{N}}_j[u]$ cannot be combined or simplified.
\end{itemize}

To investigate the multiplicity of solutions to equation~\eqref{eq-u2} with the boundary condition~\eqref{rbd}, we will extend the methodology and argument established in Sections~\ref{sub2-1}--\ref{sub2-2}. Consider the following auxiliary equations:
\begin{equation}\label{eq-Phj}
    \begin{cases}
-\nabla\cdot(\od(x)\nabla \Phi_{j,\lambda,\eta}) + \lambda f(x)\Phi_{j,\lambda,\eta}=\eta h_j(x) \quad&\text{in}\,\,\Omega,\vspace{3pt}\\ 
\displaystyle\Phi_{j,\lambda,\eta}+b(x) \frac{\partial\Phi_{j,\lambda,\eta}}{\partial{\vec{\nu}}}  = 0\quad&\text{on}\,\,\partial\Omega,
    \end{cases}
\end{equation}
for $j=1,...,k$, and
\begin{equation}\label{eq-Ps0}
    \begin{cases}
-\nabla\cdot(\od(x)\nabla \Psi_{0,\lambda}) + \lambda f(x)\Psi_{0,\lambda}=h_0(x) \quad&\text{in}\,\,\Omega,\vspace{3pt}\\ 
\displaystyle\Psi_{0,\lambda}+b(x) \frac{\partial\Psi_{0,\lambda}}{\partial{\vec{\nu}}}  = g(x)\quad&\text{on}\,\,\partial\Omega.
    \end{cases}
\end{equation}
Since $f>0$, based on (A4) and \eqref{eq-Phj}, we can easily deduce that the unique solutions~$\Phi_{1,\lambda,\eta}$,...,$\Phi_{k,\lambda,\eta}$ are linearly independent. This, along with \eqref{eq-Ps0}, allows us to conclude that the representation 
\begin{equation}\label{rp-u}
u_{\lambda,\eta}(x)=\sum\limits_{j=1}^k\boldsymbol{\mathsf{N}}_j[u_{\lambda,\eta}]\Phi_{j,\lambda,\eta}(x)+\Psi_{0,\lambda}(x)  
\end{equation}
 solves equation~\eqref{eq-u2} with the boundary condition~\eqref{rbd}. Hence, the original multiplicity problem reduces to finding multiple solutions~$u$ to the implicit integral equation~\eqref{rp-u} with multiple nonlocal terms~$\boldsymbol{\mathsf{N}}_j[u_{\lambda,\eta}]$ for $j=1,...,k$.

In what follows, we shall investigate the existence of multiple solutions to \eqref{rp-u}. We define the vector-valued function
\begin{equation}\label{HN}
\overrightarrow{\boldsymbol{\mathrm{H}}}(x):=\big\langle \frac{h_1(x)}{f(x)},...,\frac{h_k(x)}{f(x)} \big\rangle,\quad~x\in\overline{\Omega},
\end{equation}
and denote the inner product of vectors $\vec{\boldsymbol{a}},\,\vec{\boldsymbol{b}}\in\mathbb{R}^k$ as $\vec{\boldsymbol{a}}\boldsymbol{\cdot}\vec{\boldsymbol{b}}$, with the associated norm given by $|\vec{\boldsymbol{a}}|=\sqrt{\vec{\boldsymbol{a}}\boldsymbol{\cdot}\vec{\boldsymbol{a}}}$. Under an asymptotic viewpoint and using fixed point arguments, we will show that the original multiplicity problem is now linked to the number of distinct solutions~~$\vec{\boldsymbol{\mathrm{r}}}\in\mathbb{R}^k$ of the following  equation:
\begin{equation}\label{rNH}
\vec{\boldsymbol{\mathrm{r}}}=\big\langle\boldsymbol{\mathsf{N}}_1\big[\vec{\boldsymbol{\mathrm{r}}}\boldsymbol{\cdot}\overrightarrow{\boldsymbol{\mathrm{H}}}\big],...,\boldsymbol{\mathsf{N}}_k\big[\vec{\boldsymbol{\mathrm{r}}}\boldsymbol{\cdot}\overrightarrow{\boldsymbol{\mathrm{H}}}\big] \big\rangle.
\end{equation}
By \eqref{N-2j}, each $\boldsymbol{\mathsf{N}}_j\big[\vec{\boldsymbol{\mathrm{r}}}\boldsymbol{\cdot}\overrightarrow{\boldsymbol{\mathrm{H}}}\big]$  is a real number independent of $x$. As a result, \eqref{rNH} constitutes a system of algebraic equations for the unknown vector $\vec{\boldsymbol{\mathrm{r}}}$.

Firstly, we consider the set $ \boldsymbol{\mathrm{S}}_k$ consisting of solutions $\vec{\boldsymbol{\mathrm{r}}}$ to \eqref{rNH} with the following properties:

\begin{equation}\label{sk}
\boldsymbol{\mathrm{S}}_k= \left\{\vec{\boldsymbol{\mathrm{r}}}\in\mathbb{R}^k\left|\,
\begin{aligned}
&\vec{\boldsymbol{\mathrm{r}}}\,\,\text{satisfies}~(\ref{rNH}),\,\,\text{and~there~exists}~\delta_{\vec{\boldsymbol{\mathrm{r}}}}>0\,\,\text{such~that}\\
&|\mathcal{N}_j({s}_1)-\mathcal{N}_j({s}_2)|\leq\mathcal{L}_{\vec{\boldsymbol{\mathrm{r}}}}|{s}_1-{s}_2|,\,\,\text{for\,all}\, j=1,...,k,\\
&\text{whenever}\,\,||{s}_i-\vec{\boldsymbol{\mathrm{r}}}\boldsymbol{\cdot}\overrightarrow{\boldsymbol{\mathrm{H}}}||_{\mathrm{L}^{\infty}(\Omega)}<\delta_{\vec{\boldsymbol{\mathrm{r}}}},\text{ for }i=1,2.
\end{aligned}\right.\right\},
\end{equation}
where $\delta_{\vec{\boldsymbol{\mathrm{r}}}}$ depends on $\vec{\boldsymbol{\mathrm{r}}}$, and each $\mathcal{L}_{\vec{\boldsymbol{\mathrm{r}}}}$ satisfies
\begin{equation}\label{Lr}
0<\mathcal{L}_{\vec{\boldsymbol{\mathrm{r}}}}|\Omega|^{1-\frac1p}\max_{1\leq j\leq red}{k}||w_j||_{\mathrm{L}^{p}(\Omega)} <\frac1{k||\overrightarrow{\boldsymbol{\mathrm{H}}}||_{\mathrm{L}^{\infty}(\Omega)}}.
\end{equation}
Here, the $\text{L}^\infty$-norm of a $k$-dimensional vector-valued function is defined as the maximum of the $\text{L}^\infty$-norms of its $k$ components.

In our subsequent analysis of the existence of multiple solutions to \eqref{rp-u}, we assume the following regarding \eqref{sk}:
\begin{itemize}
    \item[\bf(A6).] The set  $\boldsymbol{\mathrm{S}}_k$ has $n$ distinct elements, where $n\geq2$ and $n$ can be infinite. 
\end{itemize}
In Appendix (Section~\ref{BPX}), we provide two examples to verify the validity of \eqref{Lr} and assumption~(A6).

In Section~\ref{sub2-2}, we have established the existence of multiple solutions for equation~\eqref{eq2u} with boundary condition~\eqref{rbd} for sufficiently large $\lambda$. This was achieved by combining fixed-point arguments and asymptotic analysis under assumption~(A3) for the set $\boldsymbol{\mathrm{S}}_{\star}$ defined in \eqref{s0n}. Based on these findings, we now proceed directly to define the following mapping on a suitable complete metric space (to be specified in the proof of Theorem~\ref{thm2}): 
\begin{equation}\label{nT}
\boldsymbol{\Gamma}_{\vec{\boldsymbol{\mathrm{r}}},\lambda,\eta}(\vec{\boldsymbol{\vartheta}}) =\big\langle\boldsymbol{\mathsf{N}}_1\big[(\vec{\boldsymbol{\mathrm{r}}}+\vec{\boldsymbol{\vartheta}})\boldsymbol{\cdot}\overrightarrow{\boldsymbol{\Phi}}_{\lambda,\eta}+\Psi_{0,\lambda}\big],...,\boldsymbol{\mathsf{N}}_k\big[(\vec{\boldsymbol{\mathrm{r}}}+\vec{\boldsymbol{\vartheta}})\boldsymbol{\cdot}\overrightarrow{\boldsymbol{\Phi}}_{\lambda,\eta}+\Psi_{0,\lambda}\big] \big\rangle-\vec{\boldsymbol{\mathrm{r}}}
\end{equation}
where 
\begin{equation}\label{vP}
\overrightarrow{\boldsymbol{\Phi}}_{\lambda,\eta}:=\big\langle\Phi_{1,\lambda,\eta},...,\Phi_{k,\lambda,\eta}\big\rangle.   
\end{equation}
We will prove that the mapping $\boldsymbol{\Gamma}_{\vec{\boldsymbol{\mathrm{r}}},\lambda,\eta}$ is a contraction in a suitable complete metric space for sufficiently large $\lambda$ and $\eta$, thus admitting a fixed point~$\vec{\boldsymbol{\vartheta}}_{\vec{\boldsymbol{\mathrm{r}}},\lambda,\eta}$ (cf. Proposition~\ref{prop-3} in Section~\ref{sec-thm2}). It follows from \eqref{nT} that
\begin{equation}\label{nPh}
\vec{\boldsymbol{\mathrm{r}}}+\vec{\boldsymbol{\vartheta}}_{\vec{\boldsymbol{\mathrm{r}}},\lambda,\eta}=\big\langle\boldsymbol{\mathsf{N}}_1\big[(\vec{\boldsymbol{\mathrm{r}}}+\vec{\boldsymbol{\vartheta}}_{\vec{\boldsymbol{\mathrm{r}}},\lambda,\eta})\boldsymbol{\cdot}\overrightarrow{\boldsymbol{\Phi}}_{\lambda,\eta}+\Psi_{0,\lambda}\big],...,\boldsymbol{\mathsf{N}}_k\big[(\vec{\boldsymbol{\mathrm{r}}}+\vec{\boldsymbol{\vartheta}}_{\vec{\boldsymbol{\mathrm{r}}},\lambda,\eta})\boldsymbol{\cdot}\overrightarrow{\boldsymbol{\Phi}}_{\lambda,\eta}+\Psi_{0,\lambda}\big] \big\rangle.   
\end{equation}
 Now we define
\begin{equation}\label{rtu}
u_{\vec{\boldsymbol{\mathrm{r}}},\lambda,\eta}=(\vec{\boldsymbol{\mathrm{r}}}+\vec{\boldsymbol{\vartheta}}_{\vec{\boldsymbol{\mathrm{r}}},\lambda,\eta})\boldsymbol{\cdot}\overrightarrow{\boldsymbol{\Phi}}_{\lambda,\eta}+\Psi_{0,\lambda}.  
\end{equation}
Combining \eqref{vP}--\eqref{nPh} with \eqref{rtu} yields
\begin{equation*}
u_{\vec{\boldsymbol{\mathrm{r}}},\lambda,\eta}=\sum\limits_{j=1}^k\boldsymbol{\mathsf{N}}_j\big[(\vec{\boldsymbol{\mathrm{r}}}+\vec{\boldsymbol{\vartheta}}_{\vec{\boldsymbol{\mathrm{r}}},\lambda,\eta})\boldsymbol{\cdot}\overrightarrow{\boldsymbol{\Phi}}_{\lambda,\eta}+\Psi_{0,\lambda}\big]\Phi_{j,\lambda,\eta}+\Psi_{0,\lambda}=\sum\limits_{j=1}^k\boldsymbol{\mathsf{N}}_j[u_{\vec{\boldsymbol{\mathrm{r}}},\lambda,\eta}]\Phi_{j,\lambda,\eta}+\Psi_{0,\lambda}, 
\end{equation*}
 which is precisely the form of \eqref{rp-u}. As a consequence, $u_{\vec{\boldsymbol{\mathrm{r}}},\lambda,\eta}$ defined in \eqref{rtu} forms a solution to equation~\eqref{eq-u2} with the boundary condition~\eqref{rbd}.

We will show that if $\vec{\boldsymbol{\mathrm{r}}}\in\boldsymbol{\mathrm{S}}_k$, then for sufficiently large $\lambda>0$, the mapping $\vec{\boldsymbol{\mathrm{r}}}\mapsto u_{\vec{\boldsymbol{\mathrm{r}}},\lambda,\eta}$ is injective. This injectivity, along with assumption~(A6), implies that the equation has at least $n$ distinct solutions. The corresponding result is stated as follows.
\begin{theorem}\label{thm2}
Assume \eqref{dhb} and (A4)--(A6), and that the set $\boldsymbol{\mathrm{S}}_k$ defined in \eqref{sk} satisfies (A6). Let $L>0$ and $\tau<0$ be arbitrary given constants. {\color{black} Then equation~\eqref{eq-u2} with the boundary condition~\eqref{rbd} admits multiple solutions of the form \eqref{rtu}. More precisely, the following properties (i) and (ii) hold:}
\begin{itemize}
    \item[(i)]  If $\boldsymbol{\mathrm{S}}_k=\{\vec{\boldsymbol{\mathrm{r}}}_1,...,\vec{\boldsymbol{\mathrm{r}}}_n\}\subset\mathbb{R}^k$ is a finite set with exactly $n$ distinct elements, then there exists a positive constant $\lambda^{*}$ depending on $L$, $\tau$ and all $\vec{\boldsymbol{\mathrm{r}}}_j$'s such that for $\lambda\geq\lambda^{*}$ and 
    \begin{equation}\label{le-*i}
\left|{\eta}-{\lambda}\right|\leq L\lambda^{\tau+\frac{1+3p}{4p}},
    \end{equation}
 the equation has at least $n$ distinct solutions~$u_{\vec{\boldsymbol{\mathrm{r}}}_j,\lambda,\eta}$ given explicitly by  \eqref{rtu} with $\vec{\boldsymbol{\mathrm{r}}}=\vec{\boldsymbol{\mathrm{r}}}_j$, $j=1,...,n$, where $|\vec{\boldsymbol{\vartheta}}_{\vec{\boldsymbol{\mathrm{r}}}_j,\lambda,\eta}|\leq{k}^{-1}\lambda^{\frac{1-p}{4p}}$. In particular, $u_{\vec{\boldsymbol{\mathrm{r}}}_j,\lambda,\eta}$ satisfies
\begin{equation}\label{u-1sa}
\left|\big\langle\boldsymbol{\mathsf{N}}_1\big[u_{\vec{\boldsymbol{\mathrm{r}}}_j,\lambda,\eta}\big],...,\boldsymbol{\mathsf{N}}_k\big[u_{\vec{\boldsymbol{\mathrm{r}}}_j,\lambda,\eta}\big] \big\rangle-\vec{\boldsymbol{\mathrm{r}}}_j\right| +\max_{\overline{\Omega}}  \left|u_{\vec{\boldsymbol{\mathrm{r}}}_j,\lambda,\eta}-\left(\vec{\boldsymbol{\mathrm{r}}}_j\boldsymbol{\cdot}\overrightarrow{\boldsymbol{\Phi}}_{\lambda,\eta}+\Psi_{0,\lambda}\right)\right|\xrightarrow{\lambda\to\infty}0.
\end{equation}
\item[(ii)] If $\boldsymbol{\mathrm{S}}_k$ has infinitely many distinct elements, then for any given $\widetilde{n}\in\mathbb{N}$, there exists a positive constant $\lambda_{\widetilde{n}}^*$ such that for $\lambda\geq\lambda_{\widetilde{n}}^*$ and $\eta$ satisfies \eqref{le-*i}, the equation possesses at least $\widetilde{n}$ distinct solutions.
\end{itemize}
\end{theorem}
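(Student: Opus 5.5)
The plan is to mimic the single-term argument behind Proposition~\ref{prop-t} and Theorem~\ref{thm0}, now in the vector setting. There are four steps.

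\textbf{Step 1: asymptotics of the building blocks.} I first establish asymptotic estimates for $\overrightarrow{\boldsymbol{\Phi}}_{\lambda,\eta}$ and $\Psi_{0,\lambda}$ as $\lambda\to\infty$ under the constraint \eqref{le-*i}.

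For $\Phi_{j,\lambda,\eta}$, write $\Phi_{j,\lambda,\eta}=\frac{\eta}{\lambda}\frac{h_j}{f}+R_j$. Since $\frac{h_j}{f}\in\mathrm{C}^2(\overline{\Omega})$ by (A4), the remainder $R_j$ solves
\begin{equation*}
-\nabla\cdot(\od\nabla R_j)+\lambda fR_j=\frac{\eta}{\lambda}\nabla\cdot\Big(\od\nabla\tfrac{h_j}{f}\Big)=O\Big(\frac{\eta}{\lambda}\Big),
\end{equation*}
with Robin data $-\frac{\eta}{\lambda}\big(\frac{h_j}{f}+b\,\partial_\nu\frac{h_j}{f}\big)$. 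The comparison principle, together with a boundary-layer barrier, then gives $|R_j|\le C\frac{\eta}{\lambda}$ on $\overline{\Omega}$. The key point is that $R_j$ is uniformly small away from an $O(\lambda^{-1/2})$-neighbourhood of $\partial\Omega$; hence $\|R_j\|_{\mathrm{L}^q(\Omega)}\to0$ for every finite $q$. Quantitatively, I expect $\|R_j\|_{\mathrm{L}^{p'}}\lesssim\lambda^{-1/(2p')}$, which is where an exponent of the type $\frac{1-p}{4p}$ enters.

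Similarly, $\Psi_{0,\lambda}=\frac{h_0}{\lambda f}+(\text{boundary layer})$, so $|\Psi_{0,\lambda}|\le C$ and $\|\Psi_{0,\lambda}\|_{\mathrm{L}^{p'}}\to0$.

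Finally, \eqref{le-*i} gives $\frac{\eta}{\lambda}-1=O\big(\lambda^{\tau+\frac{1+3p}{4p}-1}\big)=O\big(\lambda^{\tau+\frac{1-p}{4p}}\big)\to0$, since $\tau<0$. Therefore
\begin{equation*}
\overrightarrow{\boldsymbol{\Phi}}_{\lambda,\eta}\to\overrightarrow{\boldsymbol{\mathrm{H}}}\quad\text{in }\mathrm{L}^{p'}(\Omega),
\end{equation*}
with uniform $\mathrm{L}^\infty$ bounds, at a rate that beats $\lambda^{\frac{1-p}{4p}}$.

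\textbf{Step 2: contraction on a small ball.} For fixed $\vec{\boldsymbol{\mathrm{r}}}\in\boldsymbol{\mathrm{S}}_k$, I work on the closed ball $\mathcal{B}_\lambda=\{|\vec{\boldsymbol{\vartheta}}|\le k^{-1}\lambda^{\frac{1-p}{4p}}\}$ and show that $\boldsymbol{\Gamma}_{\vec{\boldsymbol{\mathrm{r}}},\lambda,\eta}$ maps $\mathcal{B}_\lambda$ into itself and is a contraction.

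Write $v=(\vec{\boldsymbol{\mathrm{r}}}+\vec{\boldsymbol{\vartheta}})\boldsymbol{\cdot}\overrightarrow{\boldsymbol{\Phi}}_{\lambda,\eta}+\Psi_{0,\lambda}$. Then $v-\vec{\boldsymbol{\mathrm{r}}}\boldsymbol{\cdot}\overrightarrow{\boldsymbol{\mathrm{H}}}$ is small, which places it inside the Lipschitz window $\delta_{\vec{\boldsymbol{\mathrm{r}}}}$ of \eqref{sk}. Here lies the main obstacle: \eqref{sk} asks for $\mathrm{L}^\infty$-closeness, while $\overrightarrow{\boldsymbol{\Phi}}_{\lambda,\eta}-\overrightarrow{\boldsymbol{\mathrm{H}}}$ is small only away from the boundary layer. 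My first attempt is to read \eqref{sk} as a pointwise Lipschitz condition on $\mathcal{N}_j$ near the range of $\vec{\boldsymbol{\mathrm{r}}}\boldsymbol{\cdot}\overrightarrow{\boldsymbol{\mathrm{H}}}$. The thin layer is then handled using the global local-Lipschitz bound of $\mathcal{N}_j$ on bounded sets (A5) combined with H\"older: $w_j\in\mathrm{L}^p$ and the layer has measure $O(\lambda^{-1/2})$. This produces a contribution of order $\lambda^{-\frac{p-1}{2p}}$, which is $o\big(\lambda^{\frac{1-p}{4p}}\big)$. The same H\"older argument applies for type II, using $\int w_jv$.

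With this, the self-map property reads
\begin{align*}
|\boldsymbol{\Gamma}_{\vec{\boldsymbol{\mathrm{r}}},\lambda,\eta}(\vec{\boldsymbol{\vartheta}})|
&\le\big|\langle\boldsymbol{\mathsf{N}}_j[\vec{\boldsymbol{\mathrm{r}}}\boldsymbol{\cdot}\overrightarrow{\boldsymbol{\mathrm{H}}}]\rangle_j-\vec{\boldsymbol{\mathrm{r}}}\big|+k\mathcal{L}_{\vec{\boldsymbol{\mathrm{r}}}}|\Omega|^{1-\frac1p}\max_j\|w_j\|_{\mathrm{L}^p}\|\overrightarrow{\boldsymbol{\mathrm{H}}}\|_{\mathrm{L}^\infty}|\vec{\boldsymbol{\vartheta}}|+o\big(\lambda^{\frac{1-p}{4p}}\big).
\end{align*}
The first term vanishes by \eqref{rNH}. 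By \eqref{Lr}, the coefficient of $|\vec{\boldsymbol{\vartheta}}|$ is some $\kappa<1$. Hence $\boldsymbol{\Gamma}_{\vec{\boldsymbol{\mathrm{r}}},\lambda,\eta}$ maps $\mathcal{B}_\lambda$ into itself for large $\lambda$.

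The contraction estimate comes from the same bound applied to differences, with constant $\kappa+o(1)<1$. Banach's theorem then gives a unique fixed point $\vec{\boldsymbol{\vartheta}}_{\vec{\boldsymbol{\mathrm{r}}},\lambda,\eta}$, and consequently $u_{\vec{\boldsymbol{\mathrm{r}}},\lambda,\eta}$ solves \eqref{eq-u2}--\eqref{rbd} by the computation after \eqref{rtu}.

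\textbf{Step 3: asymptotics and distinctness.} The limit \eqref{u-1sa} follows from \eqref{nPh}, from $|\vec{\boldsymbol{\vartheta}}_{\vec{\boldsymbol{\mathrm{r}}},\lambda,\eta}|\le k^{-1}\lambda^{\frac{1-p}{4p}}\to0$, and from the uniform bound on $\overrightarrow{\boldsymbol{\Phi}}_{\lambda,\eta}$.

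For distinctness, suppose $\vec{\boldsymbol{\mathrm{r}}}_i\ne\vec{\boldsymbol{\mathrm{r}}}_j$. Evaluate at the interior point $\boldsymbol{\mathrm{x}}_0$ of (A4), where $\overrightarrow{\boldsymbol{\Phi}}_{\lambda,\eta}(\boldsymbol{\mathrm{x}}_0)\to\overrightarrow{\boldsymbol{\mathrm{H}}}(\boldsymbol{\mathrm{x}}_0)$ uniformly on a neighbourhood. It is cleaner to use the whole neighbourhood: the functions $(\vec{\boldsymbol{\mathrm{r}}}_i-\vec{\boldsymbol{\mathrm{r}}}_j)\boldsymbol{\cdot}\overrightarrow{\boldsymbol{\mathrm{H}}}$ do not vanish identically near $\boldsymbol{\mathrm{x}}_0$, by the linear independence in (A4). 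Therefore $u_{\vec{\boldsymbol{\mathrm{r}}}_i}-u_{\vec{\boldsymbol{\mathrm{r}}}_j}$ converges locally uniformly there to a nonzero function. This yields $n$ distinct solutions once $\lambda\ge\lambda^*:=\max$ of the thresholds for $\vec{\boldsymbol{\mathrm{r}}}_1,\dots,\vec{\boldsymbol{\mathrm{r}}}_n$, which proves (i).

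\textbf{Step 4: the infinite case.} For (ii), pick any $\widetilde{n}$ elements of $\boldsymbol{\mathrm{S}}_k$ and apply the finite argument to them.
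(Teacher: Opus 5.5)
Your proposal is correct and has the same architecture as the paper's proof. It uses the splitting $\Phi_{j,\lambda,\eta}=\frac{\eta}{\lambda}\frac{h_j}{f}+(\text{boundary layer})$ with a barrier estimate as in \eqref{8-ch}. It obtains a contraction for $\boldsymbol{\Gamma}_{\vec{\boldsymbol{\mathrm{r}}},\lambda,\eta}$ on a ball of radius $k^{-1}\lambda^{\frac{1-p}{4p}}$ by splitting each integral into a boundary strip (local Lipschitz bound plus H\"older) and an interior region (pointwise reading of \eqref{sk}). It then reduces (ii) to (i). There are three differences worth noting.

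\textbf{The boundary strip.} The paper uses $\Omega_{\lambda^{-1/3}}$, so outside it the layer terms are $O(\exp(-\frac{\mathfrak{M}_0}{2}\lambda^{1/6}))$ in $\mathrm{L}^\infty$, and the strip itself costs $O(\lambda^{\frac{1-p}{3p}})$. You use a strip of width $C\lambda^{-1/2}$. This obliges you to control the interior exponential tails in $\mathrm{L}^{p'}$ through a coarea bound like \eqref{ch-20}. Both choices give errors $o(\lambda^{\frac{1-p}{4p}})$, so either works.

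\textbf{Distinctness.} Your neighbourhood argument is not just cleaner; it is needed. When $k\ge2$, the single scalar identity $(\vec{\boldsymbol{\mathrm{r}}}_{j_1}-\vec{\boldsymbol{\mathrm{r}}}_{j_2})\boldsymbol{\cdot}\overrightarrow{\boldsymbol{\mathrm{H}}}(\boldsymbol{\mathrm{x}}_0)=0$, from which the paper concludes, cannot force $\vec{\boldsymbol{\mathrm{r}}}_{j_1}=\vec{\boldsymbol{\mathrm{r}}}_{j_2}$. (For $k\ge2$, the numbers $h_1(\boldsymbol{\mathrm{x}}_0),\dots,h_k(\boldsymbol{\mathrm{x}}_0)$ cannot be linearly independent over $\mathbb{R}$ anyway.) Your version does require reading (A4) as linear independence of the functions $h_1,\dots,h_k$. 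It is simplest to use this on all of $\Omega$. By \eqref{8-ch}, $\overrightarrow{\boldsymbol{\Phi}}_{\lambda,\eta}\to\overrightarrow{\boldsymbol{\mathrm{H}}}$ uniformly on compact subsets of $\Omega$, uniformly in $\eta$ satisfying \eqref{le-*i}. So a single interior point where $(\vec{\boldsymbol{\mathrm{r}}}_i-\vec{\boldsymbol{\mathrm{r}}}_j)\boldsymbol{\cdot}\overrightarrow{\boldsymbol{\mathrm{H}}}\neq0$ suffices.

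\textbf{The Euclidean ball.} With your Euclidean ball the self-map constant is still $k\mathcal{L}_{\vec{\boldsymbol{\mathrm{r}}}}|\Omega|^{1-\frac1p}\max_j\|w_j\|_{\mathrm{L}^p(\Omega)}\|\overrightarrow{\boldsymbol{\mathrm{H}}}\|_{\mathrm{L}^\infty(\Omega)}$. One factor $\sqrt{k}$ comes from bounding $\vec{\boldsymbol{\vartheta}}\boldsymbol{\cdot}\overrightarrow{\boldsymbol{\mathrm{H}}}$, and the other from passing from components to the Euclidean norm of $\boldsymbol{\Gamma}$; together they give $k$. So you obtain the bound on $|\vec{\boldsymbol{\vartheta}}_{\vec{\boldsymbol{\mathrm{r}}}_j,\lambda,\eta}|$ exactly as stated in the theorem. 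The paper's $\ell^\infty$ ball gives it only in $\ell^\infty$.
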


The proof of Theorem~\ref{thm2} is {\color{black} given} in Section~\ref{sec-thm2}.
\section{\bf Proof of Proposition~\ref{prop1}}\label{APX}
In this section, we state the proof of Proposition~\ref{prop1}. 
For $\Theta\in\mathbb{R}$, we consider the equation 
\begin{equation}\label{veq}
    -\nabla\cdot(\od(x)\nabla v) +\lambda f(x)v=\eta \Theta h(x) \quad\text{in}\,\,\Omega,
\end{equation}
with the boundary condition
\begin{equation}\label{vbd}
 v+b(x) \frac{\partial v}{\partial{\vec{\nu}}}  = g(x)\quad\text{on}\,\,\partial\Omega.
\end{equation}
Note that $\Theta$ corresponds to the nonlocal term in \eqref{eq-u}.
For each $\lambda,\eta>0$, by \eqref{dhb} and the standard elliptic regularity theorem~\cite{GT1983}, equation~\eqref{veq}--\eqref{vbd} admits a unique solution $v=v_{\lambda,\eta,\Theta}\in\text{C}^1(\overline{\Omega})\cap\text{C}^2(\Omega)$. This allows us to define a mapping $T_{\lambda,\eta}:\mathbb{R}\to\mathbb{R}$ as
\begin{equation}\label{mto-v}
    T_{\lambda,\eta}(\Theta)=\boldsymbol{\mathsf{N}}[v_{\lambda,\eta,\Theta}].
\end{equation}
 For $\Theta_1,\,\Theta_2\in\mathbb{R}$, we shall claim:
 \begin{itemize}
     \item[\bf(i)] if $b(x)\geq0$ on $\partial\Omega$, then 
\begin{equation}\label{app-t1}
    |T_{\lambda,\eta}(\Theta_1)-T_{\lambda,\eta}(\Theta_2)|\leq\frac{\eta\mathcal{L}\Omega|^{1-\frac1p}||w||_{\text{L}^{p}(\Omega)}}{\lambda}\max_{\overline{\Omega}}\frac{|h|}{f}|\Theta_1-\Theta_2|.
\end{equation}
\item[\bf(ii)] if $b(x)\equiv0$ on $\partial\Omega$, then 
\begin{equation}\label{app-t3}
\begin{aligned}
   & |T_{\lambda,\eta}(\Theta_1)-T_{\lambda,\eta}(\Theta_2)|\\
\leq&\,\frac{\eta\mathcal{L}|\Omega|^{1-\frac1p}||w||_{\text{L}^{p}(\Omega)}\max_{\overline{\Omega}}|h|}{\left(\lambda\min_{\overline{\Omega}}f\right)^{\max\{\frac12,\frac1p\}}\left(\mathfrak{D}\Lambda_{\Omega}+\lambda\min_{\overline{\Omega}}f\right)^{1-\max\{\frac12,\frac1p\}}}|\Theta_1-\Theta_2|. 
\end{aligned}
\end{equation}
 \end{itemize}
\begin{proof}[\bf Proof of \eqref{app-t1}] It suffices to consider $\Theta_1\neq\Theta_2$. Let $v_i=v_{\lambda,\eta,\Theta_i}$ be the unique solution of \eqref{veq}--\eqref{vbd} corresponding to $\Theta=\Theta_i$, $i=1,2$. Then we have $\max_{\overline{\Omega}}|v_1-v_2|>0$.
Using \eqref{dhb}, a simple calculation yields:
\begin{equation}\label{app-t2}
    \begin{aligned}
   &\lambda f(x)(v_1-v_2)^2-\eta(\Theta_1-\Theta_2)h(x)(v_1-v_2)\\
   =&\, (v_1-v_2)   \nabla\cdot(\od(x)\nabla(v_1-v_2))\\ \leq&\,\frac12\nabla\cdot\left(\od(x)\nabla(v_1-v_2)^2\right)-\mathfrak{D}|\nabla(v_1-v_2)|^2,\quad\forall x\in\Omega,
    \end{aligned}
\end{equation}
and
\begin{equation}\label{vbd-2}
 2(v_1-v_2)^2+b(x) \frac{\partial}{\partial{\vec{\nu}}}(v_1-v_2)^2  =0\quad\text{on}\,\,\partial\Omega.
\end{equation}  
Since $b(x)\geq0$ and $\max_{\overline{\Omega}}|v_1-v_2|>0$, \eqref{vbd-2} implies that $(v_1-v_2)^2$ attains its maximum value at an interior point $x_M\in\Omega$. By evaluating \eqref{app-t2} at $x=x_M$, we find
$\lambda f(x_M)(v_1(x_M)-v_2(x_M))^2-\eta(\Theta_1-\Theta_2)h(x_M)(v_1(x_M)-v_2(x_M))\leq0$. As a consequence,
\begin{equation}\label{v1v2}
\max_{\overline{\Omega}}|v_1-v_2|=   |v_1(x_M)-v_2(x_M)|\leq\frac{\eta|h(x_M)|}{\lambda f(x_M)} |\Theta_1-\Theta_2|.
\end{equation}

Note that $\mathcal{N}:\mathbb{R}\to\mathbb{R}$ satisfies $|\mathcal{N}(s_1)-\mathcal{N}(s_2)|\leq\mathcal{L}|s_1-s_2|$, $\forall~s_1,s_2\in\mathbb{R}$. Combining \eqref{N-2t}, \eqref{mto-v} and   \eqref{v1v2}, one may check that
\begin{align*}
    |T_{\lambda,\eta}(\Theta_1)-T_{\lambda,\eta}(\Theta_2)|=&\,|\boldsymbol{\mathsf{N}}[v_1]-\boldsymbol{\mathsf{N}}[v_2]|\\
\leq&\,\mathcal{L}\int_{\Omega}|w(y)||v_1(y)-v_2(y)|\,\text{d}y\\
\leq&\,\frac{\eta\mathcal{L}}{\lambda} \max_{\overline{\Omega}}\frac{|h|}{f}||w||_{\text{L}^p(\Omega)}|\Omega|^{1-\frac1p} |\Theta_1-\Theta_2|.  
\end{align*}
Here, we applied the H\"{o}lder's inequality to the second line and used \eqref{v1v2} for $||v_1-v_2||_{\text{L}^q(\Omega)}$ with $q\in[1,\infty)$ satisfying  $\frac1p+\frac1q=1$ to arrive at the last estimate. This completes the proof of~\eqref{app-t1}. 
\end{proof}
\begin{proof}[\bf Proof of \eqref{app-t3}] Assume $b(x)\equiv0$ on $\partial\Omega$. Then by \eqref{vbd-2} we have $(v_1-v_2)^2=0$ on $\partial\Omega$. By \eqref{dhb}, \eqref{app-t2} and \eqref{v1v2}, we have
\begin{equation}\label{iun-1}
\begin{aligned}
  &\left(\mathfrak{D}\Lambda_{\Omega}+\lambda\min_{\overline{\Omega}}f\right)\int_{\Omega}(v_1-v_2)^2\dx\\
\leq&\,\int_{\Omega}\left(\mathfrak{D}|\nabla(v_1-v_2)|^2+\lambda\min_{\overline{\Omega}}f(v_1-v_2)^2\right)\dx\\
  \leq&\, \frac12\int_{\Omega}\nabla\cdot\left(\od(x)\nabla(v_1-v_2)^2\right)  \dx  +\eta|\Theta_1-\Theta_2|\max_{\overline{\Omega}}|h|\int_{\Omega}|v_1-v_2|  \dx \\
\leq&\,\frac{\eta^2|\Omega|\max_{\overline{\Omega}}h^2}{\lambda\min_{\overline{\Omega}} f} |\Theta_1-\Theta_2|^2.
\end{aligned}
\end{equation}
Here, the second line of \eqref{iun-1} follows from the Poincar\'{e} inequality $\Lambda_{\Omega}||v_1-v_2||_{\text{L}^2(\Omega)}^2\leq||\nabla(v_1-v_2)||_{\text{L}^2(\Omega)}^2$, while the last line is obtained due to  $\int_{\Omega}\nabla\cdot\left(\od(x)\nabla(v_1-v_2)^2\right)  \dx=0$.

To estimate $\int_{\Omega}|w(y)||v_1(y)-v_2(y)|\,\dy$, we consider two cases $p>2$ and $1<p\leq 2$. When $p>2$, a direct application of H\"{o}lder's inequality yields 
\begin{equation}\label{iun-2}
    \begin{aligned}
\int_{\Omega}|w(y)||v_1(y)-v_2(y)|\dy    \leq&\,|\Omega|^{\frac{p-2}{2p}}||w||_{\text{L}^p(\Omega)} ||v_1-v_2||_{\text{L}^2(\Omega)}\\
(\text{by\,\,(\ref{iun-1})})\,\,\leq&\,\frac{\eta|\Omega|^{1-\frac1{p}}||w||_{\text{L}^p(\Omega)}\max_{\overline{\Omega}}|h|}{\sqrt{\lambda\min_{\overline{\Omega}} f\left(\mathfrak{D}\Lambda_{\Omega}+\lambda\min_{\overline{\Omega}}f\right)}} |\Theta_1-\Theta_2|.
    \end{aligned}
\end{equation}
On the other hand, when $1<p\leq2$, we have
\begin{equation}\label{iun-3}
    \begin{aligned}
\int_{\Omega}|w(y)||v_1(y)-v_2(y)|\dy    \leq&\,||w||_{\text{L}^p(\Omega)} ||v_1-v_2||_{\text{L}^{\frac{p}{p-1}}(\Omega)}\\
\leq&\,||w||_{\text{L}^p(\Omega)}||v_1-v_2||_{\text{L}^{2}(\Omega)}^{2(1-\frac1p)} ||v_1-v_2||_{\text{L}^{\infty}(\Omega)}^{\frac2p-1} \\
(\text{by\,\,(\ref{v1v2})\,\,and\,\,(\ref{iun-1})})\,\,\leq&\,\frac{\eta|\Omega|^{1-\frac1{p}}||w||_{\text{L}^p(\Omega)}\max_{\overline{\Omega}}|h|}{\left(\lambda\min_{\overline{\Omega}}f\right)^{\frac1p}\left(\mathfrak{D}\Lambda_{\Omega}+\lambda\min_{\overline{\Omega}}f\right)^{1-\frac1p}} |\Theta_1-\Theta_2|.
    \end{aligned}
\end{equation}
By combining $|T_{\lambda,\eta}(\Theta_1)-T_{\lambda,\eta}(\Theta_2)|=|\boldsymbol{\mathsf{N}}[v_1]-\boldsymbol{\mathsf{N}}[v_2]|\leq\mathcal{L}\int_{\Omega}|w(y)||v_1(y)-v_2(y)|\,\text{d}y$ with \eqref{iun-2} and \eqref{iun-3}, we obtain \eqref{app-t3} and complete the proof.
\end{proof}

For the case $b(x)\geq0$ on $\partial\Omega$, we see from \eqref{le=} and \eqref{app-t1} that $T_{\lambda,\eta}:\mathbb{R}\to\mathbb{R}$  is a contraction mapping, which ensures the existence of a unique fixed point. In the special case where $b(x)\equiv0$ on $\partial\Omega$,  it follows from \eqref{=le} and \eqref{app-t3} that the map also admits a unique fixed point. We use the same symbol $\Theta_{\lambda,\eta}$ for the fixed point in both cases for simplicity.  This fixed point satisfies the relation
\begin{equation*}
  \Theta_{\lambda,\eta}=T_{\lambda,\eta}(\Theta_{\lambda,\eta})=\boldsymbol{\mathsf{N}}[v_{\lambda,\eta,\Theta_{\lambda,\eta}}].  
\end{equation*}
  Using this result in combination with \eqref{veq}--\eqref{vbd}, we show that $u=v_{\lambda,\eta,\Theta_{\lambda,\eta}}$ is a solution of~\eqref{eq-u}--\eqref{rbd}.

It remains to prove the uniqueness of the solution to \eqref{eq-u}--\eqref{rbd} under the conditions on $\lambda$ and $\eta$ given by \eqref{le=}. Suppose, for the sake of contradiction, that \eqref{eq-u}--\eqref{rbd} has two distinct solutions $u_1^*$ and $u_2^*$. We define
\begin{equation*}
\Theta_i^*:=\boldsymbol{\mathsf{N}}[u_i^*]\quad\text{and}\quad\,v_i^*:=v_{\lambda,\eta,\Theta_i},\,\,i=1,2.   
\end{equation*}
Then, by the uniqueness of \eqref{veq}--\eqref{vbd}, the solution corresponding to
$\Theta=\Theta_i^*$ is unique, so~$v_i^*$ must be equal to $u_i^*$. Along with \eqref{mto-v}, we have
\begin{equation*}
u_i^*=v_i^*\quad\text{and}\quad\Theta_i^*=\boldsymbol{\mathsf{N}}[v_i^*]=T_{\lambda,\eta}(\Theta_i^*),\quad~i=1,2.
\end{equation*}
This shows that both $\Theta_1^*$ and $\Theta_2^*$ are fixed points of $T_{\lambda,\eta}$ defined in \eqref{mto-v}. Since $T_{\lambda,\eta}$ is a contraction mapping with a unique fixed point over $\mathbb{R}$, we conclude that $\Theta_1^*=\Theta_2^*$. This, in turn, implies $v_1^*=v_{\lambda,\eta,\Theta_1}=v_{\lambda,\eta,\Theta_2}=v_2^*$. As a consequence, we have $u_1^*=v_1^*=v_2^*=u_2^*$ which is a contradiction to our initial assumption that the solutions were distinct. Therefore, we prove the uniqueness of the solution to \eqref{eq-u}--\eqref{rbd} and complete the proof of Proposition~\ref{prop1}.
\section{\bf Proof of Proposition~\ref{prop-t} and Theorem~\ref{thm0}}\label{sec-thm0}
\subsection{Asymptotic estimates of $\Phi_{\lambda}$ and $\Psi_{\lambda}$}\label{sec-hs}
Multiplying the equation in \eqref{eq-Phi} by $\Phi_{\lambda}-1$ and using \eqref{dhb}, one may check that
\begin{equation}\label{ch-8}
    \begin{aligned}
 \lambda (\Phi_{\lambda}-1)^2=&\,\frac1{f(x)}(\Phi_{\lambda}-1)\nabla\cdot(\od(x)\nabla \Phi_{\lambda}) \\
\leq&\,\frac1{f(x)}\left(\frac12\nabla\cdot\left(\od(x)\nabla(\Phi_{\lambda}-1)^2\right)-\mathfrak{D}|\nabla\Phi_{\lambda}|^2\right)\\
\leq&\,\left(2\min_{\overline{\Omega}}f\right)^{-1}\nabla\cdot\left(\od(x)\nabla(\Phi_{\lambda}-1)^2\right)\quad\text{in}\,\,\Omega.
    \end{aligned}
\end{equation}

We further establish a refined upper bound depending on $\lambda>0$ for $\Phi_{\lambda}$. According to \eqref{Phs} and \eqref{ch-8}, we consider an auxiliary function
\begin{equation}\label{w-V}
V(x)=\exp\left({-\mathfrak{M}_0\sqrt{\lambda}\mathsf{d}(x)}\right),\quad\,x\in\overline{\Omega},
\end{equation}
which is chosen to satisfy $V\geq(\Phi_{\lambda}-1)^2$ on $\partial\Omega$,
where $\mathsf{d}(x):=\text{dist}(x,\partial\Omega)$ and $\mathfrak{M}_0$ (independent of $\lambda$) is a positive constant to be determined later. We will derive a differential inequality for $V$ that matches the form of \eqref{ch-8}. To do this, we first introduce a key concept for a rigorous argument, noting that the boundary $\partial\Omega$ is smooth. This smoothness ensures that for a sufficiently small $\delta>0$ (independent of $\lambda$), the subdomain $\Omega_{\delta}=\{x\in\Omega\,|\,\mathsf{d}(x)<\delta\}$ is free from any focal points, and the distance function $\mathsf{d}\in\text{C}^2(\overline{\Omega_{\delta}})$ (cf. \cite{CR2015}). Using \eqref{w-V}, we can establish estimates for $\nabla\cdot\left(\nabla(V-(\Phi_{\lambda}-1)^2)\right)$ within $\Omega_{\delta}$ and then choose a constant $\mathfrak{M}_0$ such that $(\Phi_{\lambda}-1)^2(x)\leq {V}(x)$ for all $x\in\overline{\Omega_{\delta}}$, provided $\lambda$ is sufficiently large. Subsequently, a standard application of the maximum principle  (since $\min_{\overline{\Omega}}f>0$) extends this result to the entire domain, yielding $(\Phi_{\lambda}-1)^2(x)\leq{V}(x)$ for all $x\in\overline{\Omega}$. For simplicity, we will present a simplified argument by considering the case where $\overline{\Omega}$ has no focal points and $\mathsf{d}\in\text{C}^2(\overline{\Omega})$. Then, we have $|\nabla\mathsf{d}(x)|=1$ for  $x\in\Omega$, which streamlines our analysis.

We now use the preceding arguments to derive detailed estimates. A direct computation, based on the fact from \eqref{w-V} that $\nabla{V}=-\sqrt{\lambda}\mathfrak{M}_0{V}\nabla\mathsf{d}$ and $|\nabla\mathsf{d}|=1$, yields
\begin{equation}\label{mkv}
  \begin{aligned}
&\nabla\cdot\left(\od(x)\nabla{V}(x)\right)\\ 
=&\,\left[\lambda{\mathfrak{M}_0^2\nabla\mathsf{d}(x)\cdot\od(x)\cdot(\nabla\mathsf{d}(x))^T}-\sqrt{\lambda}\mathfrak{M}_0\nabla\cdot(\od(x)\nabla\mathsf{d}(x))\right]{V}(x)\\
\leq&\,\mathfrak{M}_1\left(\lambda{\mathfrak{M}_0^2}+\sqrt{\lambda}\mathfrak{M}_0\right){V}(x)\quad\text{in}\,\,\Omega,
\end{aligned}  
\end{equation}
where $(\nabla\mathsf{d})^T$ denotes the transpose of $\nabla\mathsf{d}$ and
\begin{equation}\label{m11}
\mathfrak{M}_1=\max\left\{\max_{1\leq i,j\leq {\color{black}m}}\max\limits_{\overline{\Omega}}|D_{ij}|,\max\limits_{\overline{\Omega}}\left|\nabla\cdot(\od\nabla\mathsf{d})\right|\right\}. 
\end{equation}
From \eqref{ch-8} and \eqref{mkv}--\eqref{m11}, we need to ensure a positive constant~$\mathfrak{M}_0$ such that  $\mathfrak{M}_1\left(\lambda{\mathfrak{M}_0^2}+\sqrt{\lambda}\mathfrak{M}_0\right)\leq2\lambda\min_{\overline{\Omega}}f$ holds for sufficiently large $\lambda$. To fulfill this requirement, we can choose
\begin{equation}\label{ml0}
\mathfrak{M}_0=\sqrt{\frac{1}{\mathfrak{M}_1}\min_{\overline{\Omega}}f}\,\,\text{and}\,\,   \lambda\geq\frac{1}{\mathfrak{M}_0^2}.
\end{equation}
Then, a simple calculation shows the required estimate
\begin{equation*}
   2\lambda\min_{\overline{\Omega}}f-\mathfrak{M}_1\left(\lambda{\mathfrak{M}_0^2}+\sqrt{\lambda}\mathfrak{M}_0\right)=\sqrt{\lambda}\left(\sqrt{\lambda}\min_{\overline{\Omega}}f-\mathfrak{M}_0\mathfrak{M}_1\right)  
    \geq0. 
\end{equation*}
 Under \eqref{ml0}, we thus arrive at 
\begin{align*}
0&\,\leq-\nabla\cdot\left(\od(x)\nabla\left({V}-(\Phi_{\lambda}-1)^2\right)\right)\\
&\qquad\quad+  \mathfrak{M}_1\left(\lambda{\mathfrak{M}_0^2}+\sqrt{\lambda}\mathfrak{M}_0\right)V-2\lambda\min_{\overline{\Omega}}f(\Phi_{\lambda}-1)^2\\
&\,\leq2\lambda\min_{\overline{\Omega}}f\left({V}-(\Phi_{\lambda}-1)^2\right)\quad\text{in}\,\,\Omega.    
\end{align*}
Along with $V\geq(\Phi_{\lambda}-1)^2$ on $\partial\Omega$, we conclude that for $\lambda\geq\frac{1}{\mathfrak{M}_0^2}$, the following estimate holds:
 \begin{equation}\label{ch-9}
|\Phi_{\lambda}(x)-1|\leq\sqrt{V(x)}=\exp\left({-\frac{\mathsf{d}(x)}2\mathfrak{M}_0\sqrt{\lambda}}\right),\quad\forall\,x\in\overline{\Omega}.   
 \end{equation} 
 Using \eqref{Phs}, we can apply the same argument as for \eqref{ch-8}--\eqref{ch-9} to the equation \eqref{eq-Psi} to obtain the estimate
 \begin{equation}\label{ch-10}
|\Psi_{\lambda}(x)|\leq\max_{\partial\Omega}|g|\exp\left({-\frac{\mathsf{d}(x)}2\mathfrak{M}_0\sqrt{\lambda}}\right),\quad\forall\,x\in\overline{\Omega},   
 \end{equation}
as $\lambda\geq\frac{1}{\mathfrak{M}_0^2}$.
\subsection{\bf Proof of Proposition~\ref{prop-t}}\label{sec-propt} Let $r_i\in\boldsymbol{\mathrm{S}}_{\star}$ be fixed. Firstly, we need a lemma:
\begin{lemma}\label{lem1}
    Under the same hypotheses as in Proposition~\ref{prop-t}, there exists $\widetilde{\lambda}_{i,*}\geq\frac1{\mathfrak{M}_0^2}$ such that for each $\lambda\geq\widetilde{\lambda}_{i,*}$,
    \begin{equation}\label{T*1}
   \boldsymbol{\mathsf{T}}_{i,\lambda}([-\lambda^{\frac{1-p}{4p}}, \lambda^{\frac{1-p}{4p}}])\subseteq[-\lambda^{\frac{1-p}{4p}}, \lambda^{\frac{1-p}{4p}}].     
    \end{equation}
\end{lemma}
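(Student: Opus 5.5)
Write $\epsilon_\lambda:=\lambda^{\frac{1-p}{4p}}$, which tends to $0$ as $\lambda\to\infty$ since $p>1$. The plan is to show that for $|\theta|\le\epsilon_\lambda$ the function $u_\theta:=(r_i+\theta)\Phi_\lambda+\Psi_\lambda$ stays, away from a thin boundary layer, within the Lipschitz window $[r_i-\delta_{r_i},r_i+\delta_{r_i}]$ of \eqref{s0n}. Compare it with the constant function $r_i$, using $\boldsymbol{\mathsf{N}}[r_i]=r_i$. This holds for type I because $\int_\Omega w=1$ and $\mathcal{N}(r_i)=r_i$, and for type II for the same reason. Then
\begin{equation*}
\boldsymbol{\mathsf{T}}_{i,\lambda}(\theta)=\boldsymbol{\mathsf{N}}[u_\theta]-\boldsymbol{\mathsf{N}}[r_i],
\end{equation*}
and it suffices to bound this difference by $\epsilon_\lambda$.

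\textbf{Step 1 (decomposition).} Write $u_\theta-r_i=(r_i+\theta)(\Phi_\lambda-1)+\theta+\Psi_\lambda$. By \eqref{ch-9}--\eqref{ch-10}, for $\lambda\ge \mathfrak{M}_0^{-2}$,
\begin{equation*}
|u_\theta(x)-r_i|\le \epsilon_\lambda+C_i\,e^{-\mathsf{d}(x)\mathfrak{M}_0\sqrt\lambda/2},
\end{equation*}
with $C_i=|r_i|+1+\max_{\partial\Omega}|g|$. Split $\Omega=\Omega^{\lambda}_{\rm in}\cup\Omega^{\lambda}_{\rm b}$, where $\Omega^{\lambda}_{\rm b}=\{\mathsf{d}(x)<\lambda^{-1/4}\}$. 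On $\Omega^\lambda_{\rm in}$ the exponential term is at most $C_ie^{-\mathfrak{M}_0\lambda^{1/4}/2}$, so $|u_\theta-r_i|\le 2\epsilon_\lambda<\delta_{r_i}$ for $\lambda$ large. On $\Omega^\lambda_{\rm b}$ we only have $|u_\theta|\le C_i+|r_i|$ uniformly, by \eqref{Phs}. Smoothness of $\partial\Omega$ gives $|\Omega^\lambda_{\rm b}|\le C\lambda^{-1/4}$.

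\textbf{Step 2 (type II).} Here $\int_\Omega w u_\theta-r_i=\int_\Omega w(u_\theta-r_i)$. By H\"older, the interior part is at most $2\epsilon_\lambda|\Omega|^{1-1/p}\|w\|_{L^p}$. The layer part is at most $(C_i+|r_i|)\|w\|_{L^p}|\Omega^\lambda_{\rm b}|^{1-1/p}\le C\lambda^{-\frac{p-1}{4p}}=C\epsilon_\lambda$. The constant $C$ here is a problem, so instead I would integrate the exponential directly: $\|e^{-\mathsf{d}\mathfrak{M}_0\sqrt\lambda/2}\|_{L^q}\le C\lambda^{-1/(2q)}$ with $q=p/(p-1)$. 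This gives $C\lambda^{-\frac{p-1}{2p}}=o(\epsilon_\lambda)$, which is the real reason for the exponent $\frac{1-p}{4p}$: it is half the natural rate. Hence $|\int w u_\theta-r_i|\le\epsilon_\lambda|\Omega|^{1-1/p}\|w\|_{L^p}+o(\epsilon_\lambda)<\delta_{r_i}$. Applying the local Lipschitz bound \eqref{rrL}, $|\boldsymbol{\mathsf{T}}_{i,\lambda}(\theta)|\le \mathcal{L}_{r_i}|\Omega|^{1-1/p}\|w\|_{L^p}\epsilon_\lambda+o(\epsilon_\lambda)$. The leading coefficient is strictly less than $1$, so this is $\le\epsilon_\lambda$ for $\lambda\ge\widetilde\lambda_{i,*}$.

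\textbf{Step 3 (type I).} Here $\boldsymbol{\mathsf{T}}_{i,\lambda}(\theta)=\int_\Omega w(\mathcal{N}(u_\theta)-\mathcal{N}(r_i))$. Split the integral over $\Omega^\lambda_{\rm in}$ and over $\Omega^\lambda_{\rm b}$. On the interior, $|u_\theta-r_i|\le\epsilon_\lambda+C_ie^{-\mathsf{d}\mathfrak{M}_0\sqrt\lambda/2}$ lies inside the window, so the same Lipschitz-plus-H\"older argument as in Step 2 yields $\mathcal{L}_{r_i}|\Omega|^{1-1/p}\|w\|_{L^p}\epsilon_\lambda+o(\epsilon_\lambda)$. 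On the layer, $\mathcal{N}$ is merely bounded by a constant $K_i$ on the compact range of $u_\theta$, by (A1). That contribution is at most $2K_i\|w\|_{L^p}|\Omega^\lambda_{\rm b}|^{1-1/p}\le C\lambda^{-\frac{p-1}{4p}}$, which is the same order as $\epsilon_\lambda$, not smaller.

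\textbf{Main obstacle.} The boundary-layer contribution in Step 3 is the crux. I would fix it by shrinking the layer to $\{\mathsf{d}<\lambda^{-1/2}\log\lambda\}$. Its measure is $O(\lambda^{-1/2}\log\lambda)$, so its contribution is $O((\lambda^{-1/2}\log\lambda)^{1-1/p})=o(\epsilon_\lambda)$. Outside it the exponential is at most $\lambda^{-\mathfrak{M}_0/2}$, which I would make $o(\epsilon_\lambda)$, if necessary by using a multiple of $\log\lambda$ in the layer width. The inequality then closes, giving \eqref{T*1} for all $\lambda\ge\widetilde\lambda_{i,*}$.
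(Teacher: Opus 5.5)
Your proposal is correct and follows essentially the same route as the paper: you write $\boldsymbol{\mathsf{T}}_{i,\lambda}(\theta)=\boldsymbol{\mathsf{N}}[u_\theta]-\mathcal{N}(r_i)$ and treat type II by H\"older together with the $\mathrm{L}^q$ smallness of the exponential profile (the paper's \eqref{ch-20}). For type I you split off a boundary layer where $\mathcal{N}$ is merely bounded and use the local Lipschitz window \eqref{s0n} in the interior; the only difference is the layer width, since the paper takes $\mathsf{d}<\lambda^{-\frac13}$, which already makes the layer term $O(\lambda^{\frac{1-p}{3p}})=o(\lambda^{\frac{1-p}{4p}})$ and the interior exponential $O(e^{-\mathfrak{M}_0\lambda^{1/6}/2})$, so your $\lambda^{-\frac12}\log\lambda$ repair is valid but any width $\lambda^{-\beta}$ with $\beta\in(\frac14,\frac12)$ avoids the logarithm altogether.
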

\begin{proof}
 For $r_i\in\boldsymbol{\mathrm{S}}_{\star}$ and $|\theta|\leq\lambda^{\frac{1-p}{4p}}$,  by \eqref{N-2t}, (A2) and \eqref{s0n}, we have
 \begin{equation}\label{ch-12}
 \begin{aligned}
 &\, \boldsymbol{\mathsf{N}}[(r_i+\theta)\Phi_{\lambda}+\Psi_{\lambda}]-r_i=
\boldsymbol{\mathsf{N}}[(r_i+\theta)\Phi_{\lambda}+\Psi_{\lambda}]-\mathcal{N}(r_i) \\= &\,\begin{cases}
 \displaystyle\int_\Omega w(y)\left(\mathcal{N}((r_i+\theta)\Phi_{\lambda}(y)+\Psi_{\lambda}(y))-\mathcal{N}(r_i)\right)\dy&(\text{type~I});\vspace{3pt} \\
 \displaystyle\mathcal{N}(\int_\Omega w(y)((r_i+\theta)\Phi_{\lambda}(y)+\Psi_{\lambda}(y))\dy)-\mathcal{N}(r_i)\qquad&(\text{type~II}).
    \end{cases}
 \end{aligned}
 \end{equation}
 According to \eqref{mapT}, we must individually estimate the term $| \boldsymbol{\mathsf{N}}[(r_i+\theta)\Phi_{\lambda}+\Psi_{\lambda}]-\mathcal{N}(r_i)|$ for each of the two types of nonlocal terms in \eqref{ch-12}.

We first consider the type~I of \eqref{ch-12}.  Define a tubular neighborhood
\begin{equation}\label{OMG}
\Omega_{\lambda^{-\frac13}}=\{x\in\Omega\,|\,\mathsf{d}(x)<\lambda^{-\frac13}\}\subset\Omega.    
\end{equation}
The standard Weyl's tube formula\footnote{(cf. \cite{W1939}) The volume of the tubular neighborhood $\Omega_{\mu}$ admits the following asymptotic expansion $$|\Omega_{\mu}| = \mu|\partial\Omega| - \frac{\mu^2}{2}\sum_{i=1}^{m-1}\int_{\partial\Omega} \kappa_i(x)\, \text{d}\mathcal{H}^{m-1} + O(\mu^3),\quad\text{as}\,\,\mu\to 0^+,$$ where  $ \kappa_i(x)$'s, $i=1,...,m-1$, are the principal curvatures at $x\in\partial\Omega$.} gives
\begin{equation}\label{OMV}
|\Omega_{\lambda^{-\frac13}}|=\lambda^{-\frac13}|\partial\Omega| +O(\lambda^{-\frac23})\quad\text{as}\,\,\lambda\gg1,
\end{equation}
where $|\partial\Omega|$ is the surface area of the boundary $\partial\Omega$.
 Using estimates~(\ref{ch-9})--(\ref{ch-10}), we can choose a sufficiently large constant~$\widetilde{\lambda}_i^*\geq\frac1{\mathfrak{M}_0^2}$ such that as $\lambda\geq\widetilde{\lambda}_i^*$,
\begin{equation}\label{ch-11}
    \begin{aligned}
        &\,|(r_i+\theta)\Phi_{\lambda}(x)+\Psi_{\lambda}(x)-r_i|=|(r_i+\theta)(\Phi_{\lambda}(x)-1)+\Psi_{\lambda}(x)+\theta|\\
  \leq&\,\left(|r_i|+\lambda^{\frac{1-p}{4p}}+\max_{\partial\Omega}|g|\right)\exp\left({-\frac{\lambda^{\frac16}\mathfrak{M}_0}2}\right)+\lambda^{\frac{1-p}{4p}}\\
\leq&\,\delta_{r_i},\,\,\text{for}\,\,x\in\Omega\setminus\Omega_{\lambda^{-\frac13}},
    \end{aligned}
\end{equation}
where $\delta_{r_i}$ is defined in \eqref{s0n}. In particular, \eqref{rrL} and \eqref{ch-11} indicate
\begin{equation}\label{ch-15}
\begin{aligned}
 &|\mathcal{N}((r_i+\theta)\Phi_{\lambda}(x)+\Psi_{\lambda}(x))-\mathcal{N}(r_i)|\\
 \leq&\,\mathcal{L}_{r_i}|(r_i+\theta)\Phi_{\lambda}(x)+\Psi_{\lambda}(x)-r_i|    \\
\leq &\,\mathcal{L}_{r_i}\left(\left(|r_i|+\lambda^{\frac{1-p}{4p}}+\max_{\partial\Omega}|g|\right)\exp\left({-\frac{\lambda^{\frac16}\mathfrak{M}_0}2}\right)+\lambda^{\frac{1-p}{4p}}\right).
\end{aligned}  
\end{equation}

Denote 
\begin{equation}\label{ch-16}
  \mathfrak{M}_{\lambda}(r_i):= \max_{|s|\leq|r_i|+\lambda^{\frac{1-p}{4p}}+\max_{\partial\Omega}|g|}\mathcal|{N}(s)|+|{N}(r_i)|.
\end{equation}
 Then, by (A2), \eqref{s0n}, \eqref{ch-9}--\eqref{ch-10}, \eqref{OMG} and \eqref{ch-15}, one may check that
 \begin{equation}\label{ch-13}
 \begin{aligned}
  &\left|\int_\Omega w(y)\left(\mathcal{N}((r_i+\theta)\Phi_{\lambda}(y)+\Psi_{\lambda}(y))-\mathcal{N}(r_i)\right)\dy\right|\\
=&\, \left|\left\{\int_{\Omega_{\lambda^{-\frac13}}} +\int_{\Omega\setminus\Omega_{\lambda^{-\frac13}}}\right\}w(y)\left(\mathcal{N}((r_i+\theta)\Phi_{\lambda}(y)+\Psi_{\lambda}(y))-\mathcal{N}(r_i)\right)\dy\right| \\
\leq&\,\mathfrak{M}_{\lambda}(r_i)\int_{\Omega_{\lambda^{-\frac13}}} |w(y)|\dy\\
&\,\,+\mathcal{L}_{r_i}\left(\left(|r_i|+\lambda^{\frac{1-p}{4p}}+\max_{\partial\Omega}|g|\right)\exp\left({-\frac{\lambda^{\frac16}\mathfrak{M}_0}2}\right)+\lambda^{\frac{1-p}{4p}}\right)\int_{\Omega\setminus\Omega_{\lambda^{-\frac13}}}|w(y)|\dy\\
\leq&\,\left(\mathfrak{M}_{\lambda}(r_i)|\Omega_{\lambda^{-\frac13}}|^{1-\frac1p}+\left(\widetilde{\mathfrak{M}}_{\lambda}(r_i)+\lambda^{\frac{1-p}{4p}}\mathcal{L}_{r_i}\right)|\Omega|^{1-\frac1p}\right)||w||_{\text{L}^{p}(\Omega)}
 \end{aligned}
 \end{equation}
where
\begin{equation}\label{ch-17}
\widetilde{\mathfrak{M}}_{\lambda}(r_i)   =\mathcal{L}_{r_i}\left(|r_i|+\lambda^{\frac{1-p}{4p}}+\max_{\partial\Omega}|g|\right)\exp\left({-\frac{\lambda^{\frac16}\mathfrak{M}_0}2}\right).
\end{equation}
 Note that $\mathcal{L}_{r_i}|\Omega|^{1-\frac1p}||w||_{\text{L}^{p}}<1$ (by \eqref{rrL}). Furthermore, since $p>1$, a direct calculation using \eqref{OMV}, \eqref{ch-16} and \eqref{ch-17} shows that
 $$\lambda^{\frac{p-1}{4p}}\left(\mathfrak{M}_{\lambda}(r_i)|\Omega_{\lambda^{-\frac13}}|^{1-\frac1p}+\widetilde{\mathfrak{M}}_{\lambda}(r_i)|\Omega|^{1-\frac1p}\right)||w||_{\text{L}^{p}(\Omega)}\xrightarrow{\lambda\to\infty}0.$$ 
 Hence, we can choose $\widetilde{\lambda}_{i,*}\geq\widetilde{\lambda}_i^*$ such that the following estimate holds for  $\lambda\geq\widetilde{\lambda}_{i,*}$:
 \begin{equation}\label{ch-18}
\left(\mathfrak{M}_{\lambda}(r_i)|\Omega_{\lambda^{-\frac13}}|^{1-\frac1p}+\widetilde{\mathfrak{M}}_{\lambda}(r_i)|\Omega|^{1-\frac1p}\right)||w||_{\text{L}^{p}(\Omega)}\leq\left(1-\mathcal{L}_{r_i}|\Omega|^{1-\frac1p}||w||_{\text{L}^{p}}\right)  \lambda^{\frac{1-p}{4p}} .  
 \end{equation}
Combining the results from \eqref{mapT}, \eqref{ch-12}, \eqref{ch-13} and \eqref{ch-18}, we obtain 
 \begin{equation*}
|\boldsymbol{\mathsf{T}}_{i,\lambda}(\theta)|=|\boldsymbol{\mathsf{N}}[(r_i+\theta)\Phi_{\lambda}+\Psi_{\lambda}]-\mathcal{N}(r_i)|
\leq\lambda^{\frac{1-p}{4p}},\quad\forall~\theta\in[-\lambda^{\frac{1-p}{4p}}, \lambda^{\frac{1-p}{4p}}].
 \end{equation*}
This completes the proof of \eqref{T*1} for type I.

It remains to deal with type II of \eqref{ch-12}. Note that $\int_\Omega w(y)\dy=1$ (by (A2)). Using estimates~(\ref{ch-9})--(\ref{ch-10}), one may check that 

\begin{equation}\label{ch-19}
{\begin{aligned}
     &\left| \int_\Omega w(y)((r_i+\theta)\Phi_{\lambda}(y)+\Psi_{\lambda}(y))\dy-r_i\right|\\
       \leq&\, \int_\Omega |w(y)|\left(|(r_i+\theta)(\Phi_{\lambda}(y)-1)|+|\Psi_{\lambda}(y)|+|\theta|\right)\dy\\      \leq&\,\int_\Omega|w(y)|\left(\left(|r_i|+\lambda^{\frac{1-p}{4p}}+\max_{\partial\Omega}|g|\right)\exp\left({-\frac{\mathsf{d}(y)}2\mathfrak{M}_0\sqrt{\lambda}}\right)+\lambda^{\frac{1-p}{4p}}\right)\dy\\
   \leq&\, ||w||_{\text{L}^{p}(\Omega)}\\
&\,\times\left(\left(|r_i|+\lambda^{\frac{1-p}{4p}}+\max_{\partial\Omega}|g|\right)\left(\int_\Omega\exp\left({-\frac{p\mathfrak{M}_0\sqrt{\lambda}\mathsf{d}(y)}{2(p-1)}}\right)\dy\right)^{1-\frac1p} +\lambda^{\frac{1-p}{4p}} |\Omega|^{1-\frac1p} \right).
    \end{aligned}}
\end{equation}
By a direct application of the coarea formula (cf. \cite[Theorems 3.10 and 3.14]{Evans2015}), we obtain the following estimate\footnote{We obtain \eqref{ch-20} by a similar argument to that for \cite[(43)]{LM2023}, and therefore omit the detailed proof.}:
\begin{equation}\label{ch-20}
 \frac1{|\Omega|}\int_\Omega\exp\left({-\frac{p\mathfrak{M}_0\sqrt{\lambda}\mathsf{d}(y)}{2(p-1)}}\right)\dy\leq \frac{\mathfrak{M}_2}{\sqrt{\lambda}},  
\end{equation}
where $\mathfrak{M}_2$ is a positive constant independent of $\lambda$. Since $-\frac12(1-\frac1p)<\frac{1-p}{4p}<0$,  \eqref{ch-20} gives
\begin{equation*}
\lambda^{\frac{p-1}{4p}}\left(|r_i|+\lambda^{\frac{1-p}{4p}}+\max_{\partial\Omega}|g|\right)\left(\int_\Omega\exp\left({-\frac{p\mathfrak{M}_0\sqrt{\lambda}\mathsf{d}(y)}{2(p-1)}}\right)\dy\right)^{1-\frac1p}\xrightarrow{\lambda\to\infty}0.    \end{equation*}
 In particular,  we can choose a sufficiently large constant~$\widetilde{\lambda}_i^*\geq\frac1{\mathfrak{M}_0^2}$ such that as $\lambda\geq\widetilde{\lambda}_i^*$, there holds
\begin{equation}\label{ch-23}
\begin{aligned}
   \left(|r_i|+\lambda^{\frac{1-p}{4p}}+\max_{\partial\Omega}|g|\right)&\left(\int_\Omega\exp\left({-\frac{p\mathfrak{M}_0\sqrt{\lambda}\mathsf{d}(y)}{2(p-1)}}\right)\dy\right)^{1-\frac1p}\\
&\qquad\qquad\leq\left(\frac1{\mathcal{L}_{r_i}||w||_{\text{L}^{p}(\Omega)}}-|\Omega|^{1-\frac1p}\right)\lambda^{\frac{1-p}{4p}}.  \end{aligned}
\end{equation}
Moreover, by \eqref{s0n}, \eqref{rrL}, \eqref{ch-19} and \eqref{ch-23}, we can choose $\widetilde{\lambda}_{i,*}\geq\widetilde{\lambda}_i^*$ such that $| \int_\Omega w(y)((r_i+\theta)\Phi_{\lambda}(y)+\Psi_{\lambda}(y))\dy-r_i|<\delta_{r_i}$ for  $\lambda\geq\widetilde{\lambda}_{i,*}$. This, combined with \eqref{ch-12}, leads to
 \begin{align*}
|\boldsymbol{\mathsf{T}}_{i,\lambda}(\theta)|=&\,|\boldsymbol{\mathsf{N}}[(r_i+\theta)\Phi_{\lambda}+\Psi_{\lambda}]-\mathcal{N}(r_i)|\\
=&\,\left|\mathcal{N}(\int_\Omega w(y)((r_i+\theta)\Phi_{\lambda}(y)+\Psi_{\lambda}(y))\dy)-\mathcal{N}(r_i)\right|\\
\leq&\,\mathcal{L}_{r_i}\left| \int_\Omega w(y)((r_i+\theta)\Phi_{\lambda}(y)+\Psi_{\lambda}(y))\dy-r_i\right|\\
\leq&\,\lambda^{\frac{1-p}{4p}},\quad\forall~\theta\in[-\lambda^{\frac{1-p}{4p}}, \lambda^{\frac{1-p}{4p}}].
 \end{align*}
Therefore, we prove \eqref{T*1} for type~II, and complete the proof of Lemma~\ref{lem1}.
\end{proof}

We will now show that for all sufficiently large $\lambda>0$ (possibly depending on $i$), the mapping~$\boldsymbol{\mathsf{T}}_{i,\lambda}$ defined by \eqref{mapT}  is a contraction from $[-\lambda^{\frac{1-p}{4p}}, \lambda^{\frac{1-p}{4p}}]$ into itself. As the arguments are similar to those provided in Lemma~\ref{lem1}, it is sufficient to present a detailed proof for the case of Type I of \eqref{ch-12} for clarity and rigor, i.e., 
\begin{equation}\label{mapTt}
 \boldsymbol{\mathsf{T}}_{i,\lambda}(\theta)= \int_\Omega w(y)\left(\mathcal{N}((r_i+\theta)\Phi_{\lambda}(y)+\Psi_{\lambda}(y))-\mathcal{N}(r_i)\right)\dy,\quad\theta\in[-\lambda^{\frac{1-p}{4p}}, \lambda^{\frac{1-p}{4p}}].    
 \end{equation}

 For a given $\lambda\geq\widetilde{\lambda}_{i,*}$ let $\theta_1,\theta_2\in[-\lambda^{\frac{1-p}{4p}}, \lambda^{\frac{1-p}{4p}}]$. By \eqref{ch-11}, we have
 \begin{equation*}
(r_i+\theta_1)\Phi_{\lambda}(y)+\Psi_{\lambda}(y),\,\,(r_i+\theta_2)\Phi_{\lambda}(y)+\Psi_{\lambda}(y)\in(r_i-\delta_{r_i},r_i+\delta_{r_i}),\,\,\forall y\in\Omega\setminus\Omega_{\lambda^{-\frac13}}.     
 \end{equation*}
 Consequently, applying \eqref{Phs}, \eqref{s0n}, \eqref{ch-9} and \eqref{OMG}, we obtain
\begin{equation}\label{ch-24}
    \begin{aligned}   &\int_{\Omega\setminus\Omega_{\lambda^{-\frac13}}} |w(y)|\left|\mathcal{N}((r_i+\theta_1)\Phi_{\lambda}(y)+\Psi_{\lambda}(y))-\mathcal{N}((r_i+\theta_2)\Phi_{\lambda}(y)+\Psi_{\lambda}(y))\right|\dy \\
  &\qquad\quad\quad  \leq\mathcal{L}_{r_i}|\theta_1-\theta_2|\int_{\Omega\setminus\Omega_{\lambda^{-\frac13}}}|w(y)|\Phi_{\lambda}(y)\dy
    \leq\mathcal{L}_{r_i}|\Omega|^{1-\frac1p}|w||_{\text{L}^{p}(\Omega)}|\theta_1-\theta_2|.
    \end{aligned}
\end{equation}
Here, a direct application of $0\leq\Phi_{\lambda}\leq1$ and H\"{o}lder's inequality yields the last estimate of \eqref{ch-24}. On the other hand, since 
$|(r_i+\theta_j)\Phi_{\lambda}(y)+\Psi_{\lambda}(y)|\leq|r_i|+\lambda^{\frac{1-p}{4p}}+ \max\limits_{\partial\Omega}|g|$ is uniformly bounded for $\lambda\geq\widetilde{\lambda}_{i,*}$,
by (A1), there exist finite numbers $\widetilde{L}_{r_i}>0$ and $\lambda_{i,*}\geq\widetilde{\lambda}_{i,*}$ such that, as $\lambda\geq\lambda_{i,*}$, we have 
 \begin{equation}\label{ch-25}
    \begin{aligned}   &\int_{\Omega_{\lambda^{-\frac13}}} |w(y)|\left|\mathcal{N}((r_i+\theta_1)\Phi_{\lambda}(y)+\Psi_{\lambda}(y))-\mathcal{N}((r_i+\theta_2)\Phi_{\lambda}(y)+\Psi_{\lambda}(y))\right|\dy \\
  &\qquad\quad  \leq\widetilde{L}_{r_i}|\theta_1-\theta_2|\int_{\Omega_{\lambda^{-\frac13}}}|w(y)|\Phi_{\lambda}(y)\dy
\leq\widetilde{L}_{r_i}|\Omega_{\lambda^{-\frac13}}|^{1-\frac1p}|w||_{\text{L}^{p}(\Omega)}|\theta_1-\theta_2|\\
 &\qquad\quad\leq\frac12\left(1-\mathcal{L}_{r_i}|\Omega|^{1-\frac1p}|w||_{\text{L}^{p}(\Omega)}\right)|\theta_1-\theta_2|.
 \end{aligned}
\end{equation}
The last estimate of \eqref{ch-25} is trivially valid due to $|\Omega_{\lambda^{-\frac13}}|\xrightarrow{\lambda\to\infty}0$. Combining \eqref{mapTt} with \eqref{ch-24}--\eqref{ch-25} yields
\begin{equation*}
    \begin{aligned}
  &| \boldsymbol{\mathsf{T}}_{i,\lambda}(\theta_1)- \boldsymbol{\mathsf{T}}_{i,\lambda}(\theta_2)| \\
  \leq&\,  \int_\Omega |w(y)|\left|\left(\mathcal{N}((r_i+\theta_1)\Phi_{\lambda}(y)+\Psi_{\lambda}(y))-\mathcal{N}((r_i+\theta_2)\Phi_{\lambda}(y)+\Psi_{\lambda}(y))\right)\right|\dy\\
\leq&\,\frac12\left(1+\mathcal{L}_{r_i}|\Omega|^{1-\frac1p}||w||_{\text{L}^{p}(\Omega)}\right)|\theta_1-\theta_2|,
    \end{aligned}
\end{equation*}
which is a contraction because of $\mathcal{L}_{r_i}|\Omega|^{1-\frac1p}|w||_{\text{L}^{p}(\Omega)}<1$. 
The contraction mapping theorem thus guarantees that for all~$\lambda\geq\lambda_{i,*}$, $\boldsymbol{\mathsf{T}}_{i,\lambda}:[-\lambda^{\frac{1-p}{4p}},\lambda^{\frac{1-p}{4p}}]\to[-\lambda^{\frac{1-p}{4p}},\lambda^{\frac{1-p}{4p}}]$ admits a unique fixed point $\theta_{i,\lambda}$. Therefore, the proof of Proposition~\ref{prop-t} is complete.
\subsection{\bf Proof of  Theorem~\ref{thm0}}\label{thmc}
We first consider the case where $\boldsymbol{\mathrm{S}}_{\star}=\{r_1,...,r_n\}$ is finite, consisting of exactly $n$ distinct elements. By Proposition~\ref{prop-t} we have $\boldsymbol{\mathsf{T}}_{i,\lambda}(\theta_{i,\lambda})=\theta_{i,\lambda}$ for some $|\theta_{i,\lambda}|\leq\lambda^{\frac{1-p}{4p}}$, as $\lambda\geq\lambda_{i,*}$. Together with \eqref{mapT}, this yields a solution~$u_{i,\lambda}=(r_i+\theta_{i,\lambda})\Phi_{\lambda}+\Psi_{\lambda}$ to equation~\eqref{eq2u} with the boundary condition~\eqref{rbd}, satisfying  $\boldsymbol{\mathsf{N}}[u_{i,\lambda}]=r_i+\theta_{i,\lambda}$, for each $i=1,...,n$. As a consequence, by \eqref{Phs} we have
\begin{equation}\label{bNu}
  \left| \boldsymbol{\mathsf{N}}[u_{i,\lambda}]-r_i \right|+\max_{\overline{\Omega}}  |u_{i,\lambda}-(r_i\Phi_{\lambda}+\Psi_{\lambda})|\leq2|\theta_{i,\lambda}|\leq2\lambda^{\frac{1-p}{4p}}\xrightarrow{\lambda\to\infty}0.
\end{equation}
From (A3), we see that all $r_i$  are independent of $\lambda$ and satisfy $r_i<r_{i+1}$, for $i\in\{1,...,n-1\}$. We can then find a sufficiently large $\lambda_{*}\geq\max\{\lambda_{i,*}|\,i=1,...,n\}$ such that
\begin{equation*}
\lambda_{*}^{\frac{1-p}{4p}}<\min\limits_{1\leq i\leq n-1}\frac{r_{i+1}-r_i}2.    
\end{equation*}
 This guarantees $r_i+\theta_{i,\lambda}<r_{i+1}+\theta_{i+1,\lambda}$ for $i=1,...,n-1$, as $\lambda\geq\lambda_{*}$.

 On the other hand, by \eqref{dhb} we see that the solution $\Phi_{\lambda}$ of \eqref{eq-Phi} is positive in $\Omega$. As a direct result, we immediately obtain 
\begin{equation*}
u_{i+1,\lambda}-u_{i,\lambda}=[(r_{i+1}+\theta_{i+1,\lambda})-(r_i+\theta_{i,\lambda})]\Phi_{\lambda}>0\quad\text{in}\,\, \Omega.   
\end{equation*}
 This shows that all solutions~$u_{i,\lambda}$ satisfying  \eqref{2.3u} are distinct when $\lambda\geq\lambda_{*}$. Furthermore, \eqref{u-1as} follows directly from \eqref{bNu}.

We now turn to the case where $b(x)>0$ on $\partial\Omega$, and claim that $\min_{\overline{\Omega}}\Phi_{\lambda}>0$ for all $\lambda>0$. This consequence is a direct application of Hopf lemma (cf. \cite[Theorem~2.7 \& Corollary~2.9]{HL1997}). To prove this, we set ${\phi}_{\lambda}=1-\Phi_{\lambda}$. Applying the maximum principle to the equation of ${\phi}_{\lambda}$ with the condition~\eqref{dhb}, we find that ${\phi}_{\lambda}$ must attain its maximum value at a boundary point~$x_{\text{b}}\in\partial\Omega$. Since the smoothness of $\partial\Omega$ ensures that $\Omega$ has the interior sphere property,  it follows from \cite[Corollary~2.9]{HL1997} that  $\frac{\partial{\phi}_{\lambda}}{\partial{\vec{\nu}}}(x_{\text{b}})>0$. This is equivalent to
\begin{equation}\label{maxP}
\min_{\overline{\Omega}}\Phi_{\lambda}=\Phi_{\lambda}(x_{\text{b}})=-b(x_{\text{b}})\frac{\partial\Phi_{\lambda}}{\partial{\vec{\nu}}}(x_{\text{b}})=b(x_{\text{b}})\frac{\partial{\phi}_{\lambda}}{\partial{\vec{\nu}}}(x_{\text{b}})>0. 
\end{equation}
  Consequently, the combination of \eqref{2.3u} and \eqref{maxP} implies $u_{i+1,\lambda}-u_{i,\lambda}=[(r_{i+1}+\theta_{i+1,\lambda})-(r_i+\theta_{i,\lambda})]\Phi_{\lambda}>0$ on $\overline{\Omega}$. This completes the proof of Theorem~\ref{thm0}(i).

Theorem~\ref{thm0}(ii) is a direct consequence of Theorem~\ref{thm0}(i). By considering a subset $\{r_1,...,r_{\widetilde{n}}\}$ of $\boldsymbol{\mathrm{S}}_{\star}$ such that $r_1<\cdots<r_{\widetilde{n}}$ and following the same argument as in Theorem~\ref{thm0}(i), we obtain that for each $\lambda_{\widetilde{n}}>0$ such that as $\lambda\geq\lambda_{\widetilde{n}}$ (for some $\lambda_{\widetilde{n}}>0$), the equation possesses at least $\widetilde{n}$ distinct 
 solutions of the form $(r_i+\theta_{i,\lambda})\Phi_{\lambda}+\Psi_{\lambda}$, for $i=1,...,\widetilde{n}$. Consequently, the proof of Theorem~\ref{thm0} is complete.

 \section{\bf Proof of  Theorem~\ref{thm2}}\label{sec-thm2}
 \subsection{Asymptotic estimates of $\Phi_{j,\lambda,\eta}$ and $\Psi_{0,\lambda}$}

We first establish the asymptotic estimate  of $\Phi_{j,\lambda,\eta}$ and $\Psi_{0,\lambda}$ with respect to the parameters $\lambda$ and $\eta$.
 By \eqref{dhb} and \eqref{HN}, we can apply the standard maximum principle to \eqref{eq-Phj} and obtain an upper bound of $|\Phi_{j,\lambda,\eta}|$: 
 \begin{equation}\label{mPj}
\max_{\overline{\Omega}}|\Phi_{j,\lambda,\eta}|\leq\frac{\eta}{\lambda}\max_{\overline{\Omega}}\frac{|h_j|}{f}\leq\frac{\eta}{\lambda}||\overrightarrow{\boldsymbol{\mathrm{H}}}||_{\mathrm{L}^{\infty}(\Omega)}.
 \end{equation}
Rearranging the expression in \eqref{eq-Phj}, we define a new function $\widetilde{\Phi}_j$ for simplicity by setting 
\begin{equation}\label{ch-pj}
\widetilde{\Phi}_j:=\Phi_{j,\lambda,\eta}-\frac{\eta h_j(x)}{\lambda f(x)}. 
\end{equation}
Under \eqref{ch-pj} and assumption (A4), we obtain equation of $\widetilde{\Phi}_j$:
\begin{equation}\label{wPj}
\frac1{\lambda}\nabla\cdot(\od(x)\nabla \widetilde{\Phi}_j) =f(x) \widetilde{\Phi}_j-\frac{\eta}{\lambda^2}\nabla\cdot\left(\od(x)\nabla \frac{h_j(x)}{f(x)}\right).
\end{equation}
For convenience, we omit the subscripts $\lambda$ and $\eta$ on $\widetilde{\Phi}_j$ as we will only perform estimates in the subsequent analysis. Multiplying \eqref{wPj} by $\widetilde{\Phi}_j$ and using the uniform ellipticity condition (see \eqref{dhb}-(a)), one may check that
\begin{equation}\label{6-ch}
    \begin{aligned}
&\frac1{\lambda}\nabla\cdot\left(\od(x)\nabla \left(\widetilde{\Phi}_j^2-\frac{\eta^2}{\lambda^4}\mathfrak{M}_3^2\right)\right)\\
\geq&\,2\mathfrak{D}|\nabla\widetilde{\Phi}_j|^2+2f(x) \widetilde{\Phi}_j^2-\frac{2\eta}{\lambda^2}\widetilde{\Phi}_j\nabla\cdot\left(\od(x)\nabla \frac{h_j(x)}{f(x)}\right)\\
 \geq&\,f(x) \left(\widetilde{\Phi}_j^2-\frac{\eta^2}{\lambda^4}\mathfrak{M}_{3,j}^2\right)\quad\text{in}\,\,\Omega, \\
    \end{aligned}
\end{equation}
where we define 
\begin{equation*}
\mathfrak{M}_{3,j}=\max_{\overline{\Omega}}\frac1{f}\left|\nabla\cdot\left(\od\nabla \frac{h_j}{f}\right)\right|,\quad j=0,1,...,k,    
\end{equation*}
which is a positive constant independent of $\lambda$. Then by \eqref{HN}, \eqref{mPj} and \eqref{6-ch}, we can employ the same argument as in \eqref{ch-8}--\eqref{ch-10} to obtain
\begin{equation*}
    \widetilde{\Phi}_j^2(x)-\frac{\eta^2}{\lambda^4}\mathfrak{M}_{3,j}^2\leq\frac{\eta^2}{\lambda^2}\left(4||\overrightarrow{\boldsymbol{\mathrm{H}}}||_{\mathrm{L}^{\infty}(\Omega)}^2+ \frac{\mathfrak{M}_{3,j}^2}{\lambda^2}\right)\exp\left({-\mathsf{d}(x)\mathfrak{M}_0\sqrt{\lambda}}\right)\quad\text{in}\,\,\overline{\Omega},
\end{equation*}
as $\lambda\geq\frac{1}{\mathfrak{M}_0^2}$, where $\mathfrak{M}_0$ is defined in \eqref{ml0}. Here the right-hand side of the above estimate is simplified by once again using the estimate  $\max_{\overline{\Omega}}\frac{|h_j|}{f}\leq||\overrightarrow{\boldsymbol{\mathrm{H}}}||_{\mathrm{L}^{\infty}(\Omega)}$.  As a consequence, for $j=1,...,k$, we arrive at
\begin{equation}\label{8-ch}
    \begin{aligned}
&\left|\Phi_{j,\lambda,\eta}(x)-\frac{h_j(x)}{ f(x)}\right|\\
\leq&\,\frac{|\eta-\lambda|}{\lambda}\max_{\overline{\Omega}}\frac{|h_j|}{f}+| \widetilde{\Phi}_j(x)|\\
\leq&\,\frac{|\eta-\lambda|}{\lambda}||\overrightarrow{\boldsymbol{\mathrm{H}}}||_{\mathrm{L}^{\infty}(\Omega)}+\frac{\eta}{\lambda^2}\mathfrak{M}_{3,j}\\
&\,+\frac{\eta}{\lambda}\left(2||\overrightarrow{\boldsymbol{\mathrm{H}}}||_{\mathrm{L}^{\infty}(\Omega)}+ \frac{\mathfrak{M}_{3,j}}{\lambda}\right)\exp\left({-\frac{\mathsf{d}(x)}2\mathfrak{M}_0\sqrt{\lambda}}\right)\quad\text{in}\,\,\overline{\Omega},
    \end{aligned}
\end{equation}
as $\lambda\geq\frac{1}{\mathfrak{M}_0^2}$.

The solution $\Psi_{0,\lambda}$ to \eqref{eq-Ps0} satisfies the maximum bound:
\begin{equation}\label{p0bd} \max_{\overline{\Omega}}|\Psi_{0,\lambda}|\leq\max\left\{\max_{\partial\Omega}|g|,\frac1{\lambda}\max_{\overline{\Omega}}\frac{|h_0|}{f}\right\}.
\end{equation}
We then proceed by applying the same argument as in \eqref{6-ch} to $\widetilde{\Psi}_0(x)=\Psi_{0,\lambda}(x)-\frac{h_0(x)}{\lambda f(x)}$, which yields
\begin{equation}\label{9-ch}
\begin{aligned}
\frac1{\lambda}\nabla\cdot(\od(x)\nabla \widetilde{\Psi}_0^2)
\geq&\, f(x) \left(\widetilde{\Psi}_0^2-\frac{1}{\lambda^4f^2}\left|\nabla\cdot\left(\od(x)\nabla \frac{h_0(x)}{f(x)}\right)\right|^2\right)\\
\geq&\, f(x) \left(\widetilde{\Psi}_0^2-\frac{\mathfrak{M}_{3,0}^2}{\lambda^4}\right)
\quad\text{in}\,\,\Omega.
\end{aligned}
\end{equation}
Let us consider the case where $g\neq0$ on $\partial\Omega$. Hence, by applying the same argument as in \eqref{6-ch}--\eqref{8-ch} to \eqref{9-ch} and utilizing \eqref{p0bd}, we arrive at the following estimate for $\lambda\geq\max\{\frac{1}{\mathfrak{M}_0^2},\frac{\max_{\overline{\Omega}}\frac{|h_0|}{f}}{\max_{\partial\Omega}|g|}\}$:
\begin{equation}\label{10-ch}
    |\Psi_{0,\lambda}(x)|\leq\frac1{\lambda}\max_{\overline{\Omega}}\frac{|h_0|}{f}+\max_{\partial\Omega}|g|\exp\left({-\frac{\mathsf{d}(x)}2\mathfrak{M}_0\sqrt{\lambda}}\right),\quad\forall\,x\in\overline{\Omega}.
\end{equation}
When $g\equiv0$ on $\partial\Omega$, \eqref{10-ch} holds for $\lambda>0$, trivially due to \eqref{p0bd}.

\subsection{Completion of the proof of  Theorem~\ref{thm2}}\label{finpf}
To prove Theorem~\ref{thm2}, we first consider the mapping $\boldsymbol{\Gamma}_{\vec{\boldsymbol{\mathrm{r}}},\lambda,\eta}:{\boldsymbol{\mathrm{Q}}_{\lambda}}\to{\boldsymbol{\mathrm{Q}}_{\lambda}}$ defined in \eqref{nT}. Here, we define the closed and complete metric space 
\begin{equation*}
  {\boldsymbol{\mathrm{Q}}_{\lambda}}:=\{\vec{\boldsymbol{\vartheta}}\in\mathbb{R}^k|\,\|\vec{\boldsymbol{\vartheta}}\|_{\ell^\infty}\leq\frac{\lambda^{\frac{1-p}{4p}}}{k}\} 
\end{equation*}
 equipped with the standard maximum norm $\|\cdot\|_{\ell^\infty}$ on $\mathbb{R}^k$. We then introduce the following property:
\begin{proposition}\label{prop-3}
Under the same hypotheses as in Theorem~\ref{thm2}, for each $\vec{\boldsymbol{\mathrm{r}}}\in\boldsymbol{\mathrm{S}}_k$, there exists a positive constant $\lambda_{\vec{\boldsymbol{\mathrm{r}}},L,\tau}^*$ depending mainly on $\vec{\boldsymbol{\mathrm{r}}}$, $L$ and $\tau$ such that for all $\lambda\geq\lambda_{\vec{\boldsymbol{\mathrm{r}}},L,\tau}^*$ and $\eta$ satisfying \eqref{le-*i}, the mapping $\boldsymbol{\Gamma}_{\vec{\boldsymbol{\mathrm{r}}},\lambda,\eta}:{\boldsymbol{\mathrm{Q}}_{\lambda}}\to{\boldsymbol{\mathrm{Q}}_{\lambda}}$  defined in \eqref{nT} admits a unique fixed point $\vec{\boldsymbol{\vartheta}}_{\vec{\boldsymbol{\mathrm{r}}},\lambda,\eta}$.    
\end{proposition}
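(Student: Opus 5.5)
The plan is to mirror the proof of Proposition~\ref{prop-t}. We show that $\boldsymbol{\Gamma}_{\vec{\boldsymbol{\mathrm{r}}},\lambda,\eta}$ maps ${\boldsymbol{\mathrm{Q}}_{\lambda}}$ into itself and is a contraction in $\|\cdot\|_{\ell^\infty}$ for $\lambda$ large and $\eta$ satisfying \eqref{le-*i}. The Banach fixed point theorem then gives the unique fixed point.

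\textbf{Step 1: a uniform approximation in the interior.} Fix $\vec{\boldsymbol{\mathrm{r}}}\in\boldsymbol{\mathrm{S}}_k$ and $\vec{\boldsymbol{\vartheta}}\in{\boldsymbol{\mathrm{Q}}_{\lambda}}$, and write $u_{\vec{\boldsymbol{\vartheta}}}:=(\vec{\boldsymbol{\mathrm{r}}}+\vec{\boldsymbol{\vartheta}})\boldsymbol{\cdot}\overrightarrow{\boldsymbol{\Phi}}_{\lambda,\eta}+\Psi_{0,\lambda}$. Decompose
\[
u_{\vec{\boldsymbol{\vartheta}}}-\vec{\boldsymbol{\mathrm{r}}}\boldsymbol{\cdot}\overrightarrow{\boldsymbol{\mathrm{H}}}=(\vec{\boldsymbol{\mathrm{r}}}+\vec{\boldsymbol{\vartheta}})\boldsymbol{\cdot}\big(\overrightarrow{\boldsymbol{\Phi}}_{\lambda,\eta}-\overrightarrow{\boldsymbol{\mathrm{H}}}\big)+\vec{\boldsymbol{\vartheta}}\boldsymbol{\cdot}\overrightarrow{\boldsymbol{\mathrm{H}}}+\Psi_{0,\lambda}.
\]
By \eqref{8-ch} and \eqref{10-ch}, on $\Omega\setminus\Omega_{\lambda^{-1/3}}$ this is bounded by
\[
C\Big(\tfrac{|\eta-\lambda|}{\lambda}+\tfrac{\eta}{\lambda^2}+\tfrac1\lambda+e^{-\mathfrak{M}_0\lambda^{1/6}/2}\Big)+k\|\vec{\boldsymbol{\vartheta}}\|_{\ell^\infty}\|\overrightarrow{\boldsymbol{\mathrm{H}}}\|_{\mathrm{L}^\infty}.
\]
Under \eqref{le-*i} we have $\frac{|\eta-\lambda|}{\lambda}\le L\lambda^{\tau+\frac{1-p}{4p}}=o(\lambda^{\frac{1-p}{4p}})$ since $\tau<0$, and $\eta/\lambda$ stays bounded. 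Hence the whole quantity is at most $\|\overrightarrow{\boldsymbol{\mathrm{H}}}\|_{\mathrm{L}^\infty}\lambda^{\frac{1-p}{4p}}+o(\lambda^{\frac{1-p}{4p}})$, which is $<\delta_{\vec{\boldsymbol{\mathrm{r}}}}$ for large $\lambda$. On the boundary layer $\Omega_{\lambda^{-1/3}}$ we only use uniform boundedness of $u_{\vec{\boldsymbol{\vartheta}}}$, which follows from \eqref{mPj} and \eqref{p0bd}.

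\textbf{Step 2: self-map.} Use $r_j=\boldsymbol{\mathsf{N}}_j[\vec{\boldsymbol{\mathrm{r}}}\boldsymbol{\cdot}\overrightarrow{\boldsymbol{\mathrm{H}}}]$ from \eqref{rNH}. For type I, split $\int_\Omega$ into the boundary layer and the interior as in \eqref{ch-13}. The layer contributes $O(|\Omega_{\lambda^{-1/3}}|^{1-1/p})=O(\lambda^{-(p-1)/(3p)})=o(\lambda^{\frac{1-p}{4p}})$. The interior contributes at most
\[
\mathcal{L}_{\vec{\boldsymbol{\mathrm{r}}}}|\Omega|^{1-\frac1p}\|w_j\|_{\mathrm{L}^p}\big(k\|\overrightarrow{\boldsymbol{\mathrm{H}}}\|_{\mathrm{L}^\infty}\|\vec{\boldsymbol{\vartheta}}\|_{\ell^\infty}+o(\lambda^{\frac{1-p}{4p}})\big).
\]
By \eqref{Lr}, the coefficient of $\|\vec{\boldsymbol{\vartheta}}\|_{\ell^\infty}$ is a number $\kappa<1$. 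Hence each component is at most $\kappa\lambda^{\frac{1-p}{4p}}/k+o(\lambda^{\frac{1-p}{4p}})\le\lambda^{\frac{1-p}{4p}}/k$ for $\lambda$ large.

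For type II, we argue as in \eqref{ch-19}--\eqref{ch-23}, using $\int w_j=1$ together with the coarea bound \eqref{ch-20}. The boundary layer then contributes $O(\lambda^{-(p-1)/(2p)})$, which is again $o(\lambda^{\frac{1-p}{4p}})$.

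\textbf{Step 3: contraction.} The difference $u_{\vec{\boldsymbol{\vartheta}}_1}-u_{\vec{\boldsymbol{\vartheta}}_2}=(\vec{\boldsymbol{\vartheta}}_1-\vec{\boldsymbol{\vartheta}}_2)\boldsymbol{\cdot}\overrightarrow{\boldsymbol{\Phi}}_{\lambda,\eta}$ is bounded pointwise by $k\frac{\eta}{\lambda}\|\overrightarrow{\boldsymbol{\mathrm{H}}}\|_{\mathrm{L}^\infty}\|\vec{\boldsymbol{\vartheta}}_1-\vec{\boldsymbol{\vartheta}}_2\|_{\ell^\infty}$, by \eqref{mPj}.
\begin{itemize}
\item In the interior we apply $\mathcal{L}_{\vec{\boldsymbol{\mathrm{r}}}}$, giving the factor $\kappa\eta/\lambda$, which tends to $\kappa$.
\item On the layer we apply the local Lipschitz constant of $\mathcal{N}_j$ on a fixed bounded set (A5), times $|\Omega_{\lambda^{-1/3}}|^{1-1/p}\to0$.
\end{itemize}
This gives the Lipschitz constant $\tfrac12(1+\kappa)<1$ for $\lambda\ge\lambda^*_{\vec{\boldsymbol{\mathrm{r}}},L,\tau}$. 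Type II is simpler, since $\mathcal{N}_j$ is evaluated at a single point within $\delta_{\vec{\boldsymbol{\mathrm{r}}}}$ of the limit argument.

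\textbf{Main obstacle.} The delicate point is the bookkeeping in Steps 1--2. We must check that the error $|\eta-\lambda|/\lambda$ permitted by \eqref{le-*i} is genuinely $o(\lambda^{\frac{1-p}{4p}})$, which is exactly where $\tau<0$ is used. We must also check that the factor $k$ and the radius $\lambda^{\frac{1-p}{4p}}/k$ fit \eqref{Lr} so that $\kappa<1$. I would first verify the contraction and self-map estimates for a single component $j$ and then take the maximum over $j$.
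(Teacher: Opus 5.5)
Your proposal is correct and follows essentially the same route as the paper. Both arguments split each type I integral into the boundary layer $\Omega_{\lambda^{-1/3}}$ and the interior, show the self-map property using \eqref{Lr} (your $\kappa<1$), prove a contraction with Lipschitz constant $\tfrac12(1+\kappa)$ via the same split, and conclude with the Banach fixed point theorem, handling type II through \eqref{ch-20}. The only difference is cosmetic: in the interior contraction estimate you bound $|\Phi_{j,\lambda,\eta}|$ directly by \eqref{mPj}, whereas the paper goes through \eqref{8-ch} as in \eqref{3j}, and your version is slightly cleaner.
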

\begin{proof}
For $\vec{\boldsymbol{\mathrm{r}}}\in\boldsymbol{\mathrm{S}}_k$ and  $\vec{\boldsymbol{\vartheta}}\in{\boldsymbol{\mathrm{Q}}_{\lambda}}$, it follows from \eqref{rNH}, \eqref{nT} and \eqref{vP} that
\begin{equation}\label{ch-T}
\|\boldsymbol{\Gamma}_{\vec{\boldsymbol{\mathrm{r}}},\lambda,\eta}(\vec{\boldsymbol{\vartheta}})\|_{\ell^\infty}\leq\max_{1\leq j\leq k}\left|\boldsymbol{\mathsf{N}}_j\big[(\vec{\boldsymbol{\mathrm{r}}}+\vec{\boldsymbol{\vartheta}})\boldsymbol{\cdot}\overrightarrow{\boldsymbol{\Phi}}_{\lambda,\eta}+\Psi_{0,\lambda}\big]-\boldsymbol{\mathsf{N}}_j\big[\vec{\boldsymbol{\mathrm{r}}}\boldsymbol{\cdot}\overrightarrow{\boldsymbol{\mathrm{H}}}\big]\right|.
\end{equation}
Based on \eqref{N-2j} and the property that $\int_{\Omega}w_j(y)\dy=1$, the expression related to the right-hand side of \eqref{ch-T} takes one of the following two forms:
\begin{equation}\label{N-3j}
\begin{aligned}
&\boldsymbol{\mathsf{N}}_j\big[(\vec{\boldsymbol{\mathrm{r}}}+\vec{\boldsymbol{\vartheta}})\boldsymbol{\cdot}\overrightarrow{\boldsymbol{\Phi}}_{\lambda,\eta}+\Psi_{0,\lambda}\big]-\boldsymbol{\mathsf{N}}_j\big[\vec{\boldsymbol{\mathrm{r}}}\boldsymbol{\cdot}\overrightarrow{\boldsymbol{\mathrm{H}}}\big]\\
=&\, \begin{cases}
\displaystyle\int_\Omega w_j(y)\left(\mathcal{N}_j((\vec{\boldsymbol{\mathrm{r}}}+\vec{\boldsymbol{\vartheta}})\boldsymbol{\cdot}\overrightarrow{\boldsymbol{\Phi}}_{\lambda,\eta}(y)+\Psi_{0,\lambda}(y))-\mathcal{N}_j(\vec{\boldsymbol{\mathrm{r}}}\boldsymbol{\cdot}\overrightarrow{\boldsymbol{\mathrm{H}}})\right)\dy & (\text{type~I});\vspace{3pt} \\
\displaystyle\mathcal{N}_j(\int_\Omega w_j(y)(\vec{\boldsymbol{\mathrm{r}}}+\vec{\boldsymbol{\vartheta}})\boldsymbol{\cdot}\overrightarrow{\boldsymbol{\Phi}}_{\lambda,\eta}(y)+\Psi_{0,\lambda}(y)\dy)-\mathcal{N}_j(\vec{\boldsymbol{\mathrm{r}}}\boldsymbol{\cdot}\overrightarrow{\boldsymbol{\mathrm{H}}}) & (\text{type~II}).
\end{cases}
\end{aligned}
\end{equation}
For the sake of completeness, we will now estimate the right-hand side of \eqref{ch-T} for each of the two types in \eqref{N-3j}. We begin with the estimate for the first type. Without loss of generality, we only consider the case $g\not\equiv0$ on $\partial\Omega$. Let us fix an arbitrary constant $\widehat{L}_1>1$. We require that $\lambda$ and $\eta$ satisfy
\begin{equation}\label{Leta}
\lambda\geq\max\{\frac{1}{\mathfrak{M}_0^2},\frac{\max_{\overline{\Omega}}\frac{|h_0|}{f}}{\max_{\partial\Omega}|g|},\max_{\overline{\Omega}}\frac{|h_0|}{f}+1\},\quad\frac{\eta}{\lambda}\leq\widehat{L}_1.
\end{equation}
When $\lambda>0$ is sufficiently large, we have the asymptotic relation $L\lambda^{\tau+\frac{1+3p}{4p}}\ll(\widehat{L}_1-1)\lambda$, which implies that any $\eta$ satisfying \eqref{le-*i} will also satisfy \eqref{Leta}.

By (\ref{vP}), (\ref{mPj}), (\ref{p0bd})  and \eqref{Leta}, we have
\begin{equation}\label{11-ch}
\begin{aligned}
  \max_{\overline{\Omega}}&\left|(\vec{\boldsymbol{\mathrm{r}}}+\vec{\boldsymbol{\vartheta}})\boldsymbol{\cdot}\overrightarrow{\boldsymbol{\Phi}}_{\lambda,\eta}+\Psi_{0,\lambda}\right|\\
  \leq&\,\frac{k\eta}{\lambda}\left(||\vec{\boldsymbol{\mathrm{r}}}||_{\ell^\infty}+\frac{\lambda^{\frac{1-p}{4p}}}k\right)||\overrightarrow{\boldsymbol{\mathrm{H}}}||_{\mathrm{L}^{\infty}(\Omega)}+\max\left\{\max_{\partial\Omega}|g|,\frac1{\lambda}\max_{\overline{\Omega}}\frac{|h_0|}{f}\right\}  \\
\leq&\,\widehat{L}_1(k||\vec{\boldsymbol{\mathrm{r}}}||_{\ell^\infty}+1)||\overrightarrow{\boldsymbol{\mathrm{H}}}||_{\mathrm{L}^{\infty}(\Omega)}+ \max_{\partial\Omega}|g|+1:=\widehat{\mathfrak{M}}_{\vec{\boldsymbol{\mathrm{r}}}},\quad\text{for}~\vec{\boldsymbol{\mathrm{r}}}\in\boldsymbol{\mathrm{S}}_k. 
\end{aligned}
\end{equation}
On the other hand, by applying \eqref{vP}, \eqref{OMG}, \eqref{8-ch},  \eqref{10-ch} and \eqref{Leta}, we can establish the following interior estimate for $\lambda$ and $\frac{\eta}{\lambda}$ satisfying \eqref{Leta}:
\begin{equation}\label{12-ch}
{\small\begin{aligned}
 & \max_{\overline{\Omega}\setminus\Omega_{\lambda^{-\frac13}}}\left|\left((\vec{\boldsymbol{\mathrm{r}}}+\vec{\boldsymbol{\vartheta}})\boldsymbol{\cdot}\overrightarrow{\boldsymbol{\Phi}}_{\lambda,\eta}+\Psi_{0,\lambda}\right)-\vec{\boldsymbol{\mathrm{r}}}\boldsymbol{\cdot}\overrightarrow{\boldsymbol{\mathrm{H}}}\right|\\
=&\,\max_{\overline{\Omega}\setminus\Omega_{\lambda^{-\frac13}}}\left|(\vec{\boldsymbol{\mathrm{r}}}+\vec{\boldsymbol{\vartheta}})\boldsymbol{\cdot}\left(\overrightarrow{\boldsymbol{\Phi}}_{\lambda,\eta}-\overrightarrow{\boldsymbol{\mathrm{H}}}\right)+\vec{\boldsymbol{\vartheta}}\boldsymbol{\cdot}\overrightarrow{\boldsymbol{\mathrm{H}}}+\Psi_{0,\lambda}\right|\\
\leq&\,k\left(\left(||\vec{\boldsymbol{\mathrm{r}}}||_{\ell^\infty}+\frac{\lambda^{\frac{1-p}{4p}}}k\right)||\overrightarrow{\boldsymbol{\Phi}}_{\lambda,\eta}-\overrightarrow{\boldsymbol{\mathrm{H}}}||_{\mathrm{L}^{\infty}(\overline{\Omega}\setminus\Omega_{\lambda^{-\frac13}})}+\frac{\lambda^{\frac{1-p}{4p}}}k||\overrightarrow{\boldsymbol{\mathrm{H}}}||_{\mathrm{L}^{\infty}(\Omega)}\right)+\max_{\overline{\Omega}\setminus\Omega_{\lambda^{-\frac13}}}\left|\Psi_{0,\lambda}\right|\\
\leq&\,\left(k||\vec{\boldsymbol{\mathrm{r}}}||_{\ell^\infty}+{\lambda^{\frac{1-p}{4p}}}\right)\\
&\,\times\left(\frac{|\eta-\lambda|||\overrightarrow{\boldsymbol{\mathrm{H}}}||_{\mathrm{L}^{\infty}(\Omega)}+\widehat{L}_1\mathfrak{M}_{3,j}}{\lambda}+\widehat{L}_1\left(2||\overrightarrow{\boldsymbol{\mathrm{H}}}||_{\mathrm{L}^{\infty}(\Omega)}+ \frac{\mathfrak{M}_{3,j}}{\lambda}\right)\exp\left({-\frac{\mathfrak{M}_0\lambda^{\frac16}}2}\right)\right)\\
&\,+{\lambda^{\frac{1-p}{4p}}}||\overrightarrow{\boldsymbol{\mathrm{H}}}||_{\mathrm{L}^{\infty}(\Omega)}+\frac1{\lambda}\max_{\overline{\Omega}}\frac{|h_0|}{f}+\max_{\partial\Omega}|g|\exp\left({-\frac{\mathfrak{M}_0\lambda^{\frac16}}2}\right)\\
=&\,\left(\left(k||\vec{\boldsymbol{\mathrm{r}}}||_{\ell^\infty}+{\lambda^{\frac{1-p}{4p}}}\right)\frac{|\eta-\lambda|}{\lambda}+{\lambda^{\frac{1-p}{4p}}}\right)||\overrightarrow{\boldsymbol{\mathrm{H}}}||_{\mathrm{L}^{\infty}(\Omega)}\\
&\,+\frac1{\lambda}\left(\widehat{L}_1\mathfrak{M}_{3,j}\left(k||\vec{\boldsymbol{\mathrm{r}}}||_{\ell^\infty}+{\lambda^{\frac{1-p}{4p}}}\right)+\max_{\overline{\Omega}}\frac{|h_0|}{f}\right)\\
&\,+\left(\widehat{L}_1\left(k||\vec{\boldsymbol{\mathrm{r}}}||_{\ell^\infty}+{\lambda^{\frac{1-p}{4p}}}\right)\left(2||\overrightarrow{\boldsymbol{\mathrm{H}}}||_{\mathrm{L}^{\infty}(\Omega)}+ \frac{\mathfrak{M}_{3,j}}{\lambda}\right)+\max_{\partial\Omega}|g|\right)\exp\left({-\frac{\mathfrak{M}_0\lambda^{\frac16}}2}\right).
\end{aligned}}
\end{equation}
Our next argument relies on the condition that $\lambda$ and $\eta$ satisfying \eqref{le-*i}. Note that $\tau<0$ so we have $\frac{|\eta-\lambda|}{\lambda}\leq L\lambda^{\tau+\frac{1-p}{4p}}\xrightarrow{\lambda\to\infty}0$.  Under \eqref{le-*i}, it follows from \eqref{Leta} and \eqref{12-ch} that for $\lambda\geq\widehat{\lambda}_{\vec{\boldsymbol{\mathrm{r}}},L,\tau}$,  
\begin{equation}\label{d-L2}
\max_{\overline{\Omega}\setminus\Omega_{\lambda^{-\frac13}}}\left|\left((\vec{\boldsymbol{\mathrm{r}}}+\vec{\boldsymbol{\vartheta}})\boldsymbol{\cdot}\overrightarrow{\boldsymbol{\Phi}}_{\lambda,\eta}+\Psi_{0,\lambda}\right)-\vec{\boldsymbol{\mathrm{r}}}\boldsymbol{\cdot}\overrightarrow{\boldsymbol{\mathrm{H}}}\right|<\delta_{\vec{\boldsymbol{\mathrm{r}}}},
\end{equation}
 where $\delta_{\vec{\boldsymbol{\mathrm{r}}}}$ is defined in \eqref{sk} and $\widehat{\lambda}_{\vec{\boldsymbol{\mathrm{r}}},L,\tau}$ depending on $\vec{\boldsymbol{\mathrm{r}}}\in\boldsymbol{\mathrm{S}}_k$, $L$ and $\tau$ is a constant satisfying
 \begin{equation*}
\widehat{\lambda}_{\vec{\boldsymbol{\mathrm{r}}},L,\tau}\geq\max\{\frac{1}{\mathfrak{M}_0^2},\frac{\max_{\overline{\Omega}}\frac{|h_0|}{f}}{\max_{\partial\Omega}|g|},\max_{\overline{\Omega}}\frac{|h_0|}{f}+1,\left(\frac{L}{\widehat{L}_1-1}\right)^{\frac1{-\tau+\frac{p-1}{4p}}}\}.   
 \end{equation*}
 This result, together with \eqref{sk}, yields that for all $j=1,...,k$,
\begin{equation}\label{skn}
\begin{aligned}
&\max_{\overline{\Omega}\setminus\Omega_{\lambda^{-\frac13}}}|\mathcal{N}_j((\vec{\boldsymbol{\mathrm{r}}}+\vec{\boldsymbol{\vartheta}})\boldsymbol{\cdot}\overrightarrow{\boldsymbol{\Phi}}_{\lambda,\eta}+\Psi_{0,\lambda})-\mathcal{N}_j(\vec{\boldsymbol{\mathrm{r}}}\boldsymbol{\cdot}\overrightarrow{\boldsymbol{\mathrm{H}}})|\\
\leq&\,\mathcal{L}_{\vec{\boldsymbol{\mathrm{r}}}}\max_{\overline{\Omega}\setminus\Omega_{\lambda^{-\frac13}}}|(\vec{\boldsymbol{\mathrm{r}}}+\vec{\boldsymbol{\vartheta}})\boldsymbol{\cdot}\overrightarrow{\boldsymbol{\Phi}}_{\lambda,\eta}+\Psi_{0,\lambda}-\vec{\boldsymbol{\mathrm{r}}}\boldsymbol{\cdot}\overrightarrow{\boldsymbol{\mathrm{H}}}|,\qquad\text{as}\,\,\lambda\geq\widehat{\lambda}_{\vec{\boldsymbol{\mathrm{r}}},L,\tau}.
\end{aligned}
\end{equation}

\eqref{11-ch} and \eqref{d-L2}--\eqref{skn} provide the motivation for splitting the integral of Type~I in \eqref{N-3j} into two separate integrals over domains $\Omega_{\lambda^{-\frac13}}$ and $\overline{\Omega}\setminus\Omega_{\lambda^{-\frac13}}$. We then estimate these integrals individually. First, by applying \eqref{OMV}, the estimate~\eqref{11-ch} and H\"{o}lder's inequality, we obtain 
\begin{equation}\label{1ty}
    \begin{aligned}
&\left|\int_{\Omega_{\lambda^{-\frac13}}}w_j(y)\left(\mathcal{N}_j((\vec{\boldsymbol{\mathrm{r}}}+\vec{\boldsymbol{\vartheta}})\boldsymbol{\cdot}\overrightarrow{\boldsymbol{\Phi}}_{\lambda,\eta}(y)+\Psi_{0,\lambda}(y))-\mathcal{N}_j(\vec{\boldsymbol{\mathrm{r}}}\boldsymbol{\cdot}\overrightarrow{\boldsymbol{\mathrm{H}}})\right)\dy\right| \\
\leq&\left(\max_{|s|\leq\widehat{\mathfrak{M}}_{\vec{\boldsymbol{\mathrm{r}}}}}|\mathcal{N}_j(s)|+|\mathcal{N}_j(\vec{\boldsymbol{\mathrm{r}}}\boldsymbol{\cdot}\overrightarrow{\boldsymbol{\mathrm{H}}})|\right)|\Omega_{\lambda^{-\frac13}}|^{1-\frac1p}||w_j||_{\text{L}^{p}(\Omega)}\\
=&\,\lambda^{\frac{1-p}{3p}}\left(\max_{|s|\leq\widehat{\mathfrak{M}}_{\vec{\boldsymbol{\mathrm{r}}}}}|\mathcal{N}_j(s)|+|\mathcal{N}_j(\vec{\boldsymbol{\mathrm{r}}}\boldsymbol{\cdot}\overrightarrow{\boldsymbol{\mathrm{H}}})|\right)|\partial\Omega|^{1-\frac1p}\left(||w_j||_{\text{L}^{p}(\Omega)}+\boldsymbol{\mathrm{o}}_{\lambda}\right),
    \end{aligned}
\end{equation}
where $\boldsymbol{\mathrm{o}}_{\lambda}$ denotes a quantity tending to zero as $\lambda\to\infty$. 
On the other hand, by \eqref{le-*i}, \eqref{12-ch} and \eqref{skn}, one may check that
\begin{equation}\label{2ty}
    \begin{aligned}
&\left|\int_{\Omega\setminus\overline{\Omega_{\lambda^{-\frac13}}}}w_j(y)\left(\mathcal{N}_j((\vec{\boldsymbol{\mathrm{r}}}+\vec{\boldsymbol{\vartheta}})\boldsymbol{\cdot}\overrightarrow{\boldsymbol{\Phi}}_{\lambda,\eta}(y)+\Psi_{0,\lambda}(y))-\mathcal{N}_j(\vec{\boldsymbol{\mathrm{r}}}\boldsymbol{\cdot}\overrightarrow{\boldsymbol{\mathrm{H}}})\right)\dy\right| \\
\leq&\,\mathcal{L}_{\vec{\boldsymbol{\mathrm{r}}}}|\Omega|^{1-\frac1p}||w_j||_{\text{L}^{p}(\Omega)}\max_{\overline{\Omega}\setminus\Omega_{\lambda^{-\frac13}}}|(\vec{\boldsymbol{\mathrm{r}}}+\vec{\boldsymbol{\vartheta}})\boldsymbol{\cdot}\overrightarrow{\boldsymbol{\Phi}}_{\lambda,\eta}+\Psi_{0,\lambda}-\vec{\boldsymbol{\mathrm{r}}}\boldsymbol{\cdot}\overrightarrow{\boldsymbol{\mathrm{H}}}|\\
\leq&\,\mathcal{L}_{\vec{\boldsymbol{\mathrm{r}}}}|\Omega|^{1-\frac1p}||w_j||_{\text{L}^{p}(\Omega)}\left(\left(\left(k||\vec{\boldsymbol{\mathrm{r}}}||_{\ell^\infty}+{\lambda^{\frac{1-p}{4p}}}\right)L\lambda^{\tau}+1\right)||\overrightarrow{\boldsymbol{\mathrm{H}}}||_{\mathrm{L}^{\infty}(\Omega)}+\boldsymbol{\mathrm{o}}_{\lambda}\right){\lambda^{\frac{1-p}{4p}}}\\
=&\,\mathcal{L}_{\vec{\boldsymbol{\mathrm{r}}}}|\Omega|^{1-\frac1p}||w_j||_{\text{L}^{p}(\Omega)}\left(\left(kL||\vec{\boldsymbol{\mathrm{r}}}||_{\ell^\infty}\lambda^{\tau}+1\right)||\overrightarrow{\boldsymbol{\mathrm{H}}}||_{\mathrm{L}^{\infty}(\Omega)}+\boldsymbol{\mathrm{o}}_{\lambda}\right){\lambda^{\frac{1-p}{4p}}}.
    \end{aligned}
\end{equation}
The last two estimates of \eqref{2ty} are simplified by applying the trivial asymptotics $\exp\left({-\frac{\mathfrak{M}_0\lambda^{\frac16}}2}\right)\ll\frac1\lambda\ll\lambda^{\frac{1-p}{4p}}\ll1$.

Hence, by applying the estimates \eqref{1ty} and \eqref{2ty} to the Type I integral of \eqref{N-3j}, we arrive at the following refined estimate of \eqref{ch-T} for each $\vec{\boldsymbol{\mathrm{r}}}\in\boldsymbol{\mathrm{S}}_k$ and $\vec{\boldsymbol{\vartheta}}\in{\boldsymbol{\mathrm{Q}}_{\lambda}}$:
\begin{equation}\label{ch2-T}
\begin{aligned}
&\|\boldsymbol{\Gamma}_{\vec{\boldsymbol{\mathrm{r}}},\lambda,\eta}(\vec{\boldsymbol{\vartheta}})\|_{\ell^\infty}\\
\leq&\,\max_{1\leq j\leq k}\left|\boldsymbol{\mathsf{N}}_j\big[(\vec{\boldsymbol{\mathrm{r}}}+\vec{\boldsymbol{\vartheta}})\boldsymbol{\cdot}\overrightarrow{\boldsymbol{\Phi}}_{\lambda,\eta}+\Psi_{0,\lambda}\big]-\boldsymbol{\mathsf{N}}_j\big[\vec{\boldsymbol{\mathrm{r}}}\boldsymbol{\cdot}\overrightarrow{\boldsymbol{\mathrm{H}}}\big]\right|\\
\leq&\,\lambda^{\frac{1-p}{3p}}\left(\max_{|s|\leq\widehat{\mathfrak{M}}_{\vec{\boldsymbol{\mathrm{r}}}}}|\mathcal{N}_j(s)|+|\mathcal{N}_j(\vec{\boldsymbol{\mathrm{r}}}\boldsymbol{\cdot}\overrightarrow{\boldsymbol{\mathrm{H}}})|\right)\left(\frac{|\partial\Omega|}{|\Omega|}\right)^{1-\frac1p}\left(||w_j||_{\text{L}^{p}(\Omega)}+\boldsymbol{\mathrm{o}}_{\lambda}\right)\\
&\,+\mathcal{L}_{\vec{\boldsymbol{\mathrm{r}}}}|\Omega|^{1-\frac1p}||w_j||_{\text{L}^{p}(\Omega)}\left(\left(kL||\vec{\boldsymbol{\mathrm{r}}}||_{\ell^\infty}\lambda^{\tau}+1\right)||\overrightarrow{\boldsymbol{\mathrm{H}}}||_{\mathrm{L}^{\infty}(\Omega)}+\boldsymbol{\mathrm{o}}_{\lambda}\right){\lambda^{\frac{1-p}{4p}}}\\
=&\,\left(\mathcal{L}_{\vec{\boldsymbol{\mathrm{r}}}}|\Omega|^{1-\frac1p}||w_j||_{\text{L}^{p}(\Omega)}||\overrightarrow{\boldsymbol{\mathrm{H}}}||_{\mathrm{L}^{\infty}(\Omega)}+\left(\max_{|s|\leq\widehat{\mathfrak{M}}_{\vec{\boldsymbol{\mathrm{r}}}}}|\mathcal{N}_j(s)|+||\vec{\boldsymbol{\mathrm{r}}}||_{\ell^\infty}+1\right)\boldsymbol{\mathrm{o}}_{\lambda}\right)\lambda^{\frac{1-p}{4p}}.   
\end{aligned}
\end{equation}
Here we have used the fact that $\tau<0$ and $\lambda^{\frac{1-p}{3p}}\ll\lambda^{\frac{1-p}{4p}}$ as $\lambda\to\infty$. Let us recall the condition \eqref{Lr}. Then, for each $\vec{\boldsymbol{\mathrm{r}}}\in\boldsymbol{\mathrm{S}}_k$, we ensure the existence of a positive constant $\widehat{\lambda}_{\vec{\boldsymbol{\mathrm{r}}},L,\tau}^*\geq\widehat{\lambda}_{\vec{\boldsymbol{\mathrm{r}}},L,\tau}$ such that for $\lambda\geq\widehat{\lambda}_{\vec{\boldsymbol{\mathrm{r}}},L,\tau}^*$, we have
\begin{equation*}
\left(\max_{|s|\leq\widehat{\mathfrak{M}}_{\vec{\boldsymbol{\mathrm{r}}}}}|\mathcal{N}_j(s)|+||\vec{\boldsymbol{\mathrm{r}}}||_{\ell^\infty}+1\right){\boldsymbol{\mathrm{o}}}_{\lambda}<\frac1{k}-\mathcal{L}_{\vec{\boldsymbol{\mathrm{r}}}}|\Omega|^{1-\frac1p}\max_{1\leq j\leq k}||w_j||_{\mathrm{L}^{p}(\Omega)}||\overrightarrow{\boldsymbol{\mathrm{H}}}||_{\mathrm{L}^{\infty}(\Omega)}.
\end{equation*}
This along with \eqref{ch2-T} yields the endomorphism
\begin{equation}\label{sTt}
\boldsymbol{\Gamma}_{\vec{\boldsymbol{\mathrm{r}}},\lambda,\eta}(\boldsymbol{\mathrm{Q}}_{\lambda})\subseteq\boldsymbol{\mathrm{Q}}_{\lambda}\quad\text{for}\,\,\lambda\geq\widehat{\lambda}_{\vec{\boldsymbol{\mathrm{r}}},L,\tau}^*.
\end{equation}

Regarding the estimate of \eqref{ch-T} corresponding to the Type II integral of \eqref{N-3j}, we can still obtain the same estimate as in \eqref{sTt} by following an argument similar to the one used for the Type I integral and an analysis analogous to \eqref{ch-19}--\eqref{ch-23}.

Based on the endomorphism of $\boldsymbol{\Gamma}_{\vec{\boldsymbol{\mathrm{r}}},\lambda,\eta}$ for $\lambda\geq\widehat{\lambda}_{\vec{\boldsymbol{\mathrm{r}}},L,\tau}^*$, it remains to show that for sufficiently large~$\lambda$, the mapping $\boldsymbol{\Gamma}_{\vec{\boldsymbol{\mathrm{r}}},\lambda,\eta}:{\boldsymbol{\mathrm{Q}}_{\lambda}}\to{\boldsymbol{\mathrm{Q}}_{\lambda}}$ is a contraction, which guarantees the existence of a unique fixed point on $\boldsymbol{\mathrm{Q}}_{\lambda}$. Since the estimate for the Type~II integral of \eqref{N-3j} can be handled in a similar fashion as for the Type~I integral, we will only provide a detailed analysis for the case of the Type~I integral. This analysis will determine the specific range of $\lambda$ for which the mapping becomes a contraction on $\boldsymbol{\mathrm{Q}}_{\lambda}$.

Accordingly, for $\vec{\boldsymbol{\vartheta}}_1,\vec{\boldsymbol{\vartheta}}_2\in\boldsymbol{\mathrm{Q}}_{\lambda}$, we will estimate $\|\boldsymbol{\Gamma}_{\vec{\boldsymbol{\mathrm{r}}},\lambda,\eta}(\vec{\boldsymbol{\vartheta}}_1)-\boldsymbol{\Gamma}_{\vec{\boldsymbol{\mathrm{r}}},\lambda,\eta}(\vec{\boldsymbol{\vartheta}_2})\|_{\ell^\infty}$ by focusing on the Type I integral of \eqref{N-2j}. As will be shown, one may check from \eqref{N-2j} and \eqref{nT}--\eqref{vP} that
\begin{equation}\label{0j}
    \begin{aligned}
&\|\boldsymbol{\Gamma}_{\vec{\boldsymbol{\mathrm{r}}},\lambda,\eta}(\vec{\boldsymbol{\vartheta}}_1)-\boldsymbol{\Gamma}_{\vec{\boldsymbol{\mathrm{r}}},\lambda,\eta}(\vec{\boldsymbol{\vartheta}_2})\|_{\ell^\infty}\\
\leq&\, \max_{1\leq j\leq k}\left|\boldsymbol{\mathsf{N}}_j\big[(\vec{\boldsymbol{\mathrm{r}}}+\vec{\boldsymbol{\vartheta}}_1)\boldsymbol{\cdot}\overrightarrow{\boldsymbol{\Phi}}_{\lambda,\eta}+\Psi_{0,\lambda}\big]-\boldsymbol{\mathsf{N}}_j\big[(\vec{\boldsymbol{\mathrm{r}}}+\vec{\boldsymbol{\vartheta}}_2)\boldsymbol{\cdot}\overrightarrow{\boldsymbol{\Phi}}_{\lambda,\eta}+\Psi_{0,\lambda}\big]\right|\\
\leq&\,\max_{1\leq j\leq k}\int_{\Omega}|w_j|\left|\mathcal{N}_j((\vec{\boldsymbol{\mathrm{r}}}+\vec{\boldsymbol{\vartheta}}_1)\boldsymbol{\cdot}\overrightarrow{\boldsymbol{\Phi}}_{\lambda,\eta}+\Psi_{0,\lambda})-\mathcal{N}_j((\vec{\boldsymbol{\mathrm{r}}}+\vec{\boldsymbol{\vartheta}}_2)\boldsymbol{\cdot}\overrightarrow{\boldsymbol{\Phi}}_{\lambda,\eta}+\Psi_{0,\lambda})\right|\dy.
    \end{aligned}
\end{equation}
For simplicity, we set
\begin{equation*}
\mathfrak{I}_j:=|w_j|\left|\mathcal{N}_j((\vec{\boldsymbol{\mathrm{r}}}+\vec{\boldsymbol{\vartheta}}_1)\boldsymbol{\cdot}\overrightarrow{\boldsymbol{\Phi}}_{\lambda,\eta}+\Psi_{0,\lambda})-\mathcal{N}_j((\vec{\boldsymbol{\mathrm{r}}}+\vec{\boldsymbol{\vartheta}}_2)\boldsymbol{\cdot}\overrightarrow{\boldsymbol{\Phi}}_{\lambda,\eta}+\Psi_{0,\lambda})\right|,~1\leq j\leq k.  
\end{equation*}
Recall that $\mathcal{N}_j:\mathbb{R}\to\mathbb{R}$ is locally Lipschitz continuous. Then by \eqref{le-*i}, \eqref{OMV} and \eqref{mPj}, for $\lambda\geq\widehat{\lambda}_{\vec{\boldsymbol{\mathrm{r}}},L,\tau}^*$ sufficiently large, there exists a constant  $\mathcal{L}_{\vec{\boldsymbol{\mathrm{r}}},j}^*>0$ independent of $\lambda$ such that
\begin{equation}\label{2j}
    \begin{aligned}
 \int_{\Omega_{\lambda^{-\frac13}}}\mathfrak{I}_j(y)\dy \leq&\, 
\mathcal{L}_{\vec{\boldsymbol{\mathrm{r}}},j}^*\int_{\Omega_{\lambda^{-\frac13}}}|w_j|\left|(\vec{\boldsymbol{\vartheta}}_1-\vec{\boldsymbol{\vartheta}}_2)\boldsymbol{\cdot}\overrightarrow{\boldsymbol{\Phi}}_{\lambda,\eta}\right|\dy\\
\leq&\,\frac{k\eta}{\lambda}\mathcal{L}_{\vec{\boldsymbol{\mathrm{r}}},j}^*||\overrightarrow{\boldsymbol{\mathrm{H}}}||_{\mathrm{L}^{\infty}(\Omega)}\|\vec{\boldsymbol{\vartheta}}_1-\vec{\boldsymbol{\vartheta}_2}\|_{\ell^\infty}\int_{\Omega_{\lambda^{-\frac13}}}|w_j|\dy\\
\leq&\,\frac{k\eta}{\lambda}\mathcal{L}_{\vec{\boldsymbol{\mathrm{r}}},j}^*||\overrightarrow{\boldsymbol{\mathrm{H}}}||_{\mathrm{L}^{\infty}(\Omega)}|\Omega_{\lambda^{-\frac13}}|^{1-\frac1p}||w_j||_{\text{L}^{p}(\Omega)}\|\vec{\boldsymbol{\vartheta}}_1-\vec{\boldsymbol{\vartheta}_2}\|_{\ell^\infty}\\
\leq&\,k\lambda^{\frac{1-p}{3p}}\left(1+L\lambda^{\tau+\frac{1-p}{4p}}\right)\left(|\partial\Omega|^{1-\frac1p}+\boldsymbol{\mathrm{o}}_{\lambda}\right)\mathcal{L}_{\vec{\boldsymbol{\mathrm{r}}},j}^*\\
&\,\times||\overrightarrow{\boldsymbol{\mathrm{H}}}||_{\mathrm{L}^{\infty}(\Omega)}||w_j||_{\text{L}^{p}(\Omega)}\|\vec{\boldsymbol{\vartheta}}_1-\vec{\boldsymbol{\vartheta}_2}\|_{\ell^\infty}.
    \end{aligned}
\end{equation}
Here, the last line of \eqref{2j} is verified using \eqref{le-*i} and \eqref{OMV}. Note also that
\begin{equation}\label{2j1}
k\lambda^{\frac{1-p}{3p}}\left(1+L\lambda^{\tau+\frac{1-p}{4p}}\right)\left(|\partial\Omega|^{1-\frac1p}+\boldsymbol{\mathrm{o}}_{\lambda}\right)\xrightarrow{\lambda\to\infty}0
\end{equation}
trivially due to $p>1$ and $\tau<0$.

Recall that for $\lambda\geq\widehat{\lambda}_{\vec{\boldsymbol{\mathrm{r}}},L,\tau}^*\geq\widehat{\lambda}_{\vec{\boldsymbol{\mathrm{r}}},L,\tau}$, \eqref{d-L2} is valid for all $\vec{\boldsymbol{\vartheta}}\in\boldsymbol{\mathrm{Q}}_{\lambda}$. Hence, for $\vec{\boldsymbol{\vartheta}}_1,\vec{\boldsymbol{\vartheta}}_2\in\boldsymbol{\mathrm{Q}}_{\lambda}$,  by \eqref{sk} and \eqref{d-L2} we have 
\begin{equation}\label{1j2}
    \begin{aligned}
\int_{\Omega\setminus\overline{\Omega_{\lambda^{-\frac13}}}}\mathfrak{I}_j(y)\dy\leq&\,\mathcal{L}_{\vec{\boldsymbol{\mathrm{r}}}}\int_{\Omega\setminus\overline{\Omega_{\lambda^{-\frac13}}}}|w_j|\left|(\vec{\boldsymbol{\vartheta}}_1-\vec{\boldsymbol{\vartheta}}_2)\boldsymbol{\cdot}\overrightarrow{\boldsymbol{\Phi}}_{\lambda,\eta}\right|\dy\\
\leq&\,k\mathcal{L}_{\vec{\boldsymbol{\mathrm{r}}}}|\Omega|^{1-\frac1p}||w_j||_{\text{L}^{p}(\Omega)}\|\vec{\boldsymbol{\vartheta}}_1-\vec{\boldsymbol{\vartheta}_2}\|_{\ell^\infty}\max_{1\leq j\leq k}\max_{\Omega\setminus\overline{\Omega_{\lambda^{-\frac13}}}}|\Phi_{j,\lambda,\eta}|.
    \end{aligned}    
\end{equation}
On the other hand, by \eqref{HN}, \eqref{le-*i} and \eqref{8-ch} we have
\begin{equation}\label{3j}
    \begin{aligned}  \max_{1\leq j\leq k}\max_{\Omega\setminus\overline{\Omega_{\lambda^{-\frac13}}}}|\Phi_{j,\lambda,\eta}|\leq&\,\max_{1\leq j\leq k}\max_{\Omega\setminus\overline{\Omega_{\lambda^{-\frac13}}}}\left|\Phi_{j,\lambda,\eta}-\frac{h_j}{f}\right|+||\overrightarrow{\boldsymbol{\mathrm{H}}}||_{\mathrm{L}^{\infty}(\Omega)}\\
    \leq&\,\left(\frac{|\eta-\lambda|}{\lambda}+1\right)||\overrightarrow{\boldsymbol{\mathrm{H}}}||_{\mathrm{L}^{\infty}(\Omega)}+\frac{\eta}{\lambda^2}\mathfrak{M}_{3,j}\\
&\,+\frac{\eta}{\lambda}\left(2||\overrightarrow{\boldsymbol{\mathrm{H}}}||_{\mathrm{L}^{\infty}(\Omega)}+ \frac{\mathfrak{M}_{3,j}}{\lambda}\right)\exp\left({-\frac{\mathfrak{M}_0}2\lambda^{\frac16}}\right)\\
\xrightarrow{\lambda\to\infty}&\,||\overrightarrow{\boldsymbol{\mathrm{H}}}||_{\mathrm{L}^{\infty}(\Omega)}.
    \end{aligned}
\end{equation}
Here we have used \eqref{le-*i} to verify $\frac{|\eta-\lambda|}{\lambda}\leq L\lambda^{\tau+\frac{1-p}{4p}}\xrightarrow{\lambda\to\infty}0$ and $\frac\eta{\lambda^2}\xrightarrow{\lambda\to\infty}0$.

Since $k\mathcal{L}_{\vec{\boldsymbol{\mathrm{r}}}}|\Omega|^{1-\frac1p}||w_j||_{\text{L}^{p}(\Omega)}||\overrightarrow{\boldsymbol{\mathrm{H}}}||_{\mathrm{L}^{\infty}(\Omega)}<1$ (cf. \eqref{Lr}), we combine the estimates from \eqref{2j}--\eqref{2j1} and \eqref{1j2}--\eqref{3j} to show that there exists a constant $\lambda_{\vec{\boldsymbol{\mathrm{r}}},L,\tau}^*\geq\widehat{\lambda}_{\vec{\boldsymbol{\mathrm{r}}},L,\tau}^*$ such that for all $\lambda\geq\lambda_{\vec{\boldsymbol{\mathrm{r}}},L,\tau}^*$, we have
\begin{align*}
  \max_{1\leq j\leq k} \int_{\Omega}\mathfrak{I}_j(y)\dy \leq&\,\frac12\left(1+k\mathcal{L}_{\vec{\boldsymbol{\mathrm{r}}}}|\Omega|^{1-\frac1p}||w_j||_{\text{L}^{p}(\Omega)}||\overrightarrow{\boldsymbol{\mathrm{H}}}||_{\mathrm{L}^{\infty}(\Omega)}\right) \|\vec{\boldsymbol{\vartheta}}_1-\vec{\boldsymbol{\vartheta}_2}\|_{\ell^\infty}.
\end{align*}
This result, together with \eqref{0j}, shows that the mapping $\boldsymbol{\Gamma}_{\vec{\boldsymbol{\mathrm{r}}},\lambda,\eta}:{\boldsymbol{\mathrm{Q}}_{\lambda}}\to{\boldsymbol{\mathrm{Q}}_{\lambda}}$ is a contraction and thus admits a unique fixed point $\vec{\boldsymbol{\vartheta}}_{\vec{\boldsymbol{\mathrm{r}}},\lambda,\eta}$ for all $\lambda\geq\lambda_{\vec{\boldsymbol{\mathrm{r}}},L,\tau}^*$. This completes the proof of Proposition~\ref{prop-3}.
\end{proof}

 Having Proposition~\ref{prop-3} at hand, we now present the proof of Theorem~\ref{thm2}. 
 \begin{proof}[Proof of Theorem~\ref{thm2}]
 We first consider the case where $\boldsymbol{\mathrm{S}}_k=\{\vec{\boldsymbol{\mathrm{r}}}_1,...,\vec{\boldsymbol{\mathrm{r}}}_n\}$ is a finite set of $n$ distinct elements. By Proposition~\ref{prop-3}, for $\lambda\geq\max_{\color{black} 1\leq j\leq n}\lambda_{\vec{\boldsymbol{\mathrm{r}}}_j,L,\tau}^*$, and provided that $\eta$ satisfies \eqref{le-*i}, each mapping $\boldsymbol{\Gamma}_{\vec{\boldsymbol{\mathrm{r}}}_j,\lambda,\eta}:{\boldsymbol{\mathrm{Q}}_{\lambda}}\to{\boldsymbol{\mathrm{Q}}_{\lambda}}$ defined in \eqref{nT} admits a unique fixed point $\vec{\boldsymbol{\vartheta}}_{\vec{\boldsymbol{\mathrm{r}}}_j,\lambda,\eta}$ that satisfies $\|\vec{\boldsymbol{\vartheta}}_{\vec{\boldsymbol{\mathrm{r}}}_j,\lambda,\eta}\|_{\ell^\infty}\leq{k^{-1}\lambda^{\frac{1-p}{4p}}}$. This implies that,  for all $j=1,...,k$,  $u_{\vec{\boldsymbol{\mathrm{r}}}_j,\lambda,\eta}$ as defined in \eqref{rtu} with $\vec{\boldsymbol{\mathrm{r}}}=\vec{\boldsymbol{\mathrm{r}}}_j$ is a solution to equation~\eqref{eq-u2} with the boundary condition~\eqref{rbd}. By \eqref{rtu}, \eqref{le-*i} and \eqref{mPj}, for $\lambda\geq\max_{\color{black} 1\leq j\leq n}\lambda_{\vec{\boldsymbol{\mathrm{r}}}_j,L,\tau}^*$, we arrive at
\begin{equation}\label{r-ut}
\begin{aligned}
\max_{\overline{\Omega}}\left|u_{\vec{\boldsymbol{\mathrm{r}}}_j,\lambda,\eta}-\left(\vec{\boldsymbol{\mathrm{r}}}_j\boldsymbol{\cdot}\overrightarrow{\boldsymbol{\Phi}}_{\lambda,\eta}+\Psi_{0,\lambda}\right)\right|\leq&\,\frac{k\eta}{\lambda}\|\vec{\boldsymbol{\vartheta}}_{\vec{\boldsymbol{\mathrm{r}}}_j,\lambda,\eta}\|_{\ell^\infty}\|\overrightarrow{\boldsymbol{\mathrm{H}}}\|_{\mathrm{L}^{\infty}(\Omega)}\\
\leq&\,\left(1+L\lambda^{\tau+\frac{1-p}{4p}}\right)\lambda^{\tau+\frac{1-p}{4p}}\|\overrightarrow{\boldsymbol{\mathrm{H}}}\|_{\mathrm{L}^{\infty}(\Omega)}.
\end{aligned}
\end{equation}
Additionally, since $\vec{\boldsymbol{\vartheta}}_{\vec{\boldsymbol{\mathrm{r}}}_j,\lambda,\eta}$ is a fixed point of the mapping in \eqref{nT} with $\vec{\boldsymbol{\mathrm{r}}}=\vec{\boldsymbol{\mathrm{r}}}_j$, it follows that
\begin{equation}\label{theta*}
\left|\big\langle\boldsymbol{\mathsf{N}}_1\big[u_{\vec{\boldsymbol{\mathrm{r}}}_j,\lambda,\eta}\big],...,\boldsymbol{\mathsf{N}}_k\big[u_{\vec{\boldsymbol{\mathrm{r}}}_j,\lambda,\eta}\big] \big\rangle-\vec{\boldsymbol{\mathrm{r}}}_j\right|\leq\|\vec{\boldsymbol{\vartheta}}_{\vec{\boldsymbol{\mathrm{r}}}_j,\lambda,\eta}\|_{\ell^\infty}\leq{k^{-1}\lambda^{\frac{1-p}{4p}}},~\text{for}~{\color{black}1\leq j\leq n}.
\end{equation}
Consequently, \eqref{u-1sa} is a direct result of \eqref{r-ut} and \eqref{theta*}.

We now show that there exists a constant $\lambda^*\geq\max_{\color{black} 1\leq j\leq n}\lambda_{\vec{\boldsymbol{\mathrm{r}}}_j,L,\tau}^*$ such that for all $\lambda\geq\lambda^*$ and $\eta$ satisfying \eqref{le-*i}, all solutions $u_{\vec{\boldsymbol{\mathrm{r}}}_1,\lambda,\eta},...,u_{\vec{\boldsymbol{\mathrm{r}}}_n,\lambda,\eta}$ are distinct.
We proceed by contradiction as follows. 

Suppose that there exist $\vec{\boldsymbol{\mathrm{r}}}_{j_1}\neq\vec{\boldsymbol{\mathrm{r}}}_{j_2}$ such that
\begin{equation}\label{u*}
u_{\vec{\boldsymbol{\mathrm{r}}}_{j_1},\widetilde{\lambda}_i,\widetilde{\eta}_i}=u_{\vec{\boldsymbol{\mathrm{r}}}_{j_2},\widetilde{\lambda}_i,\widetilde{\eta}_i}
\end{equation}
for a sequence $\{(\widetilde{\lambda}_i,\widetilde{\eta}_i)\}_{i\in\mathbb{N}}$ where $\lim_{i\to\infty}\widetilde{\lambda}_i=\infty$ and \eqref{le-*i} holds with $(\lambda,\eta)=(\widetilde{\lambda}_i,\widetilde{\eta}_i)$. From \eqref{r-ut} and \eqref{u*}, it follows that 
\begin{equation}\label{*u}
\begin{aligned}
&\max_{\overline{\Omega}}\left|(\vec{\boldsymbol{\mathrm{r}}}_{j_1}-\vec{\boldsymbol{\mathrm{r}}}_{j_2})\boldsymbol{\cdot}\overrightarrow{\boldsymbol{\Phi}}_{\widetilde{\lambda}_i,\widetilde{\eta}_i}\right|\\
\leq&\,\max_{\overline{\Omega}}\left(\left|u_{\vec{\boldsymbol{\mathrm{r}}}_{\color{black}j_1},\widetilde{\lambda}_i,\widetilde{\eta}_i}-(\vec{\boldsymbol{\mathrm{r}}}_{\color{black}j_1}\boldsymbol{\cdot}\overrightarrow{\boldsymbol{\Phi}}_{\lambda,\eta}+\Psi_{0,\widetilde{\lambda}_i})\right|+\left|u_{\vec{\boldsymbol{\mathrm{r}}}_{\color{black}j_2},\lambda,\eta}-(\vec{\boldsymbol{\mathrm{r}}}_{j_2}\boldsymbol{\cdot}\overrightarrow{\boldsymbol{\Phi}}_{\widetilde{\lambda}_i,\widetilde{\eta}_i}+\Psi_{0,\widetilde{\lambda}_i})\right|\right) \\
\leq&\,2\left(1+L\widetilde{\lambda}_i^{\tau+\frac{1-p}{4p}}\right)\widetilde{\lambda}_i^{\tau+\frac{1-p}{4p}}\|\overrightarrow{\boldsymbol{\mathrm{H}}}\|_{\mathrm{L}^{\infty}(\Omega)}\to0\quad\text{as}\,\,i\to\infty.
\end{aligned}
\end{equation}
We recall from (A4) that the set $\{h_1(\boldsymbol{\mathrm{x}}_0),...,h_k(\boldsymbol{\mathrm{x}}_0)\}$ is linearly independent for an interior point $\boldsymbol{\mathrm{x}}_0\in\Omega$. Note also that the distance $\mathsf{d}(\boldsymbol{\mathrm{x}}_0)>0$ is independent of $\widetilde{\lambda}_i$. Thus, by virtue of \eqref{HN}, \eqref{vP} and \eqref{8-ch}, we immediately obtain
\begin{equation*}
\lim_{i\to\infty}\overrightarrow{\boldsymbol{\Phi}}_{\widetilde{\lambda}_i,\widetilde{\eta}_i}(\boldsymbol{\mathrm{x}}_0)=\overrightarrow{\boldsymbol{\mathrm{H}}}(\boldsymbol{\mathrm{x}}_0)=\big\langle \frac{h_1(\boldsymbol{\mathrm{x}}_0)}{f(\boldsymbol{\mathrm{x}}_0)},...,\frac{h_k(\boldsymbol{\mathrm{x}}_0)}{f(\boldsymbol{\mathrm{x}}_0)} \big\rangle.
\end{equation*}
This, along with \eqref{*u}, leads to $(\vec{\boldsymbol{\mathrm{r}}}_{j_1}-\vec{\boldsymbol{\mathrm{r}}}_{j_2})\boldsymbol{\cdot}\big\langle h_1(\boldsymbol{\mathrm{x}}_0),...,h_k(\boldsymbol{\mathrm{x}}_0) \big\rangle=0$, which by the linear independence of $\{h_1(\boldsymbol{\mathrm{x}}_0),...,h_k(\boldsymbol{\mathrm{x}}_0)\}$ implies $\vec{\boldsymbol{\mathrm{r}}}_{j_1}=\vec{\boldsymbol{\mathrm{r}}}_{j_2}$. This contradiction invalidates our initial assumption in \eqref{u*} and, therefore, completes the proof of Theorem~\ref{thm2}(i).

Theorem~\ref{thm2}(ii) is a direct consequence of Theorem~\ref{thm2}(i). By considering a subset $\{\vec{\boldsymbol{\mathrm{r}}}_1,...,\vec{\boldsymbol{\mathrm{r}}}_{\widetilde{n}}\}$ of $\boldsymbol{\mathrm{S}}_k$  and following the same argument as in the proof of Theorem~\ref{thm2}(i), we obtain that there exists $\lambda_{\widetilde{n}}^*>0$ such that for all $\lambda\geq\lambda_{\widetilde{n}}^*$, the equation~\eqref{eq-u2} with the boundary condition~\eqref{rbd} possesses at least $\widetilde{n}$ distinct solutions. This completes the proof of Theorem~\ref{thm2}.
 \end{proof}

%\section{\bf Concluding remarks}

\section{\bf Concluding remarks and outlook}\label{sec:conclusion}
In this work, we study the qualitative properties of linear elliptic equations with multiple nonlocal nonlinearities. These equations present significant challenges, as they are typically not amenable to standard variational methods and often fall outside the scope of classical elliptic theory. To address the difficulty in analysis, we developed a novel theoretical framework that integrates fixed-point arguments with asymptotic techniques. This approach establishes a new connection between asymptotic analysis and the existence and multiplicity theory of non-variational elliptic equations. Our methodology not only circumvents the absence of a variational structure but also uncovers a deeper understanding of how the interplay between parameters and nonlocal effects dictates the number of solutions. 

Beyond a purely mathematical study, our work provides a new perspective on the fundamental nature of these nonlocal equations. The diverse asymptotics of solutions we have uncovered suggests that the interplay between parameters and nonlocal effects dictates not just existence, but also the very structure of these solutions. This finding is significant as it provides a new paradigm for understanding how nonlocality fundamentally alters the behavior of nonlocal elliptic equations, with potential implications for modeling complex phenomena in physics and biology.

 Our findings contribute to a broader effort to understand the intricate interplay between locality and nonlocality in mathematical models of the physical systems. Furthermore, our results motivate a deeper exploration into the specific physical interpretations of the multiple solutions we have found. For instance, in applications like population dynamics or neural networks, these distinct solutions could correspond to different stable states or equilibrium patterns. We believe this work provides a solid foundation for future research and underscores the importance of developing new analytical tools for nonlocal problems. In the coming years, we expect this framework to be extended to investigate equations with more complex nonlocal terms or different boundary conditions.

\section{\bf Appendix: Validity of assumptions~(A3) and (A6)}\label{BPX}
We will provide examples on the validity of assumptions (A3) and (A6), which are constructed by means of periodically perturbed functions. 

{\bf Validity of assumption~(A3).}~To demonstrate the validity of assumption (A3), we can consider a simplified case where $w(y)=\frac1{|\Omega|}$. This choice satisfies assumption (A2) and implies that the condition on the Lipschitz constant in \eqref{s0n} becomes $\mathcal{L}_r<\frac{|\Omega|^{\frac1p-1}}{||w||_{\mathrm{L}^{p}(\Omega)}}=1$. We now provide an example for \eqref{s0n} satisfying (A3) as follows.
\begin{itemize}
    \item[\bf(B1)]  Consider a cubic polynomial
    \begin{equation*}
 \mathcal{N}(r)=r-\frac2{(r_1-r_2)^2}(r-r_1)(r-r_2)\left(r-\frac{r_1+r_2}2\right),       
    \end{equation*}
where $r_1\neq r_2$ are reals. Then the equation $r=\mathcal{N}(r)$ has three distinct real roots $r_1$, $r_2$ and $\frac{r_1+r_2}2$ satisfying $\mathcal{N}'(r_1)=0$, $\mathcal{N}'(r_2)=0$ and $\mathcal{N}'(\frac{r_1+r_2}2)=\frac32$. It immediately follows $\boldsymbol{\mathrm{S}}_{\star}=\{r_1,r_2\}$ for this specific case. 
\end{itemize}

In fact, high-degree polynomials $\mathcal{N}(r)$ that satisfy condition (A3) are frequently encountered. 

For the reader's reference, we provide examples~(B2)--(B4) constructed from periodically perturbed functions to illustrate a case where the set $\boldsymbol{\mathrm{S}}_{\star}$ contains infinitely many elements.  
\begin{itemize}
\item[\bf(B2)] For $\mathcal{N}(r)=r+\sin r$, the roots of the equation~$r=\mathcal{N}(r)$ are given by $j\pi$ for $j\in\mathbb{Z}$.
Although this $\mathcal{N}:\mathbb{R}\to\mathbb{R}$  is globally Lipschitz continuous with an optimal constant of $2$, a much tighter local Lipschitz condition holds in certain neighborhoods of the roots $r_j=(2j-1)\pi$:\footnote{Note that $\mathcal{N}'((2j-1)\pi)=0$. This estimate is only used to align with the definition of the set $\boldsymbol{\mathrm{S}}_{\star}$ in \eqref{s0n}.}
 \begin{equation*}
 |\mathcal{N}(s_1)-\mathcal{N}(s_2)|\leq \frac12|s_1-s_2|~\text{for}~s_1,s_2\in(r_j-\frac\pi3,r_j+\frac\pi3),~\forall i\in\mathbb{Z},
 \end{equation*}
which implies $ \boldsymbol{\mathrm{S}}_{\star}=\{(2j-1)\pi|\,j\in\mathbb{Z}\}$.
 This is an important example showing that even if $\mathcal{N}:\mathbb{R}\to\mathbb{R}$ is globally Lipschitz continuous with  Lipschitz constant large than $\frac{|\Omega|^{\frac1p-1}}{||w||_{\mathrm{L}^{p}(\Omega)}}$, its corresponding set $\boldsymbol{\mathrm{S}}_{\star}$ can still have infinitely many elements.
    \item[\bf(B3)] Consider
\begin{equation*}
 \mathcal{N}(r)=r+\alpha\text{e}^{-r^2}\sin r\quad\text{with}\,\,\alpha\in(0,1],   
\end{equation*}
 for which the corresponding $\boldsymbol{\mathrm{S}}_{\star}=\{(2j-1)\pi|\,j\in\mathbb{Z}\}$. 
     \item[\bf(B4)] Consider $\mathcal{N}(r)=r+\xi(r)$, where
 \begin{equation*}
  \xi(r)=
  \begin{cases}
 \frac12r(1-r),\quad\,r\in [0,1),\\
 \frac12(r-1)(r-2),\quad\,r\in[1,2),
  \end{cases}
 \end{equation*}
 and extended periodically with a period of $2$. In this case,  $\boldsymbol{\mathrm{S}}_{\star}=\{2j-1|\,j\in\mathbb{Z}\}$.
\end{itemize}
 (B3) and (B4) can be straightforward to verify.

{\bf Validity of assumption~(A6).}~We provide examples to demonstrate the validity of assumption (A6). For simplicity, we set $k=2$ and let the functions $w_1$ and $w_2$ be defined on two disjoint subsets $\Omega_1,\Omega_2\subset\Omega$ with $|\Omega_1|=|\Omega_2|=\frac12|\Omega|$, respectively, as follows: 
\begin{equation}\label{ww}
w_1(y)=\frac2{|\Omega|}\chi_{\Omega_1},\quad\, w_2(y)=\frac2{|\Omega|}\chi_{\Omega_2}, 
\end{equation}
where $\chi_{\Omega_j}$ denotes the characteristic function on the set $\Omega_j$, $j=1,2$. \eqref{ww} immediately implies that $w_1$ and $w_2$ are linearly independent and satisfy $\int_{\Omega}w_1(y)\dy=\int_{\Omega}w_2(y)\dy=1$ and $||w_1||_{\mathrm{L}^{p}(\Omega)}=||w_2||_{\mathrm{L}^{p}(\Omega)}=(\frac{|\Omega|}2)^{\frac1p-1}$.

Next, we define the functions $h_j$'s and $f$ as  
\begin{equation}\label{hjf}
   \begin{cases}
h_1(x)=f(x)+\gamma_1,\quad~h_2(x)=2f(x)+\gamma_2,\\
\text{where}\,\,f\not\equiv\text{constant}\,\,\text{on}\,\,\overline{\Omega},\,\,\text{and}\,\,\gamma_j\text{'s~are~constants~satisfying}\,\,\gamma_2\neq2\gamma_1.   
   \end{cases} 
\end{equation}
This ensures that $h_1$ and $h_2$ are linearly independent on the domain $\overline{\Omega}$. On the other hand, the condition on all Lipschitz constants~$\mathcal{L}_{\vec{\boldsymbol{\mathrm{r}}}}$ in \eqref{Lr} reduces to 
\begin{equation}\label{LH}
0<\mathcal{L}_{\vec{\boldsymbol{\mathrm{r}}}}<\frac{2^{\frac1p-3}}{||\overrightarrow{\boldsymbol{\mathrm{H}}}||_{\mathrm{L}^{\infty}(\Omega)}}\,\,\text{for~all}\,\, \vec{\boldsymbol{\mathrm{r}}}\in\boldsymbol{\mathrm{S}}_2. 
\end{equation}

Since by \eqref{HN} and \eqref{hjf}, we have $\overrightarrow{\boldsymbol{\mathrm{H}}}(x)-\langle 1,2\rangle=\langle \frac{\gamma_1}{f(x)},\frac{\gamma_2}{f(x)}\rangle$ on $\overline{\Omega}$,
we will construct $\mathcal{N}_1$ and $\mathcal{N}_2$ and determine the values of $\gamma_1$ and $\gamma_2$ such that any solution $\vec{\boldsymbol{\mathrm{r}}}\in\mathbb{R}^2$ to the system $\vec{\boldsymbol{\mathrm{r}}}=\big\langle{\mathcal{N}}_1\big(\vec{\boldsymbol{\mathrm{r}}}\boldsymbol{\cdot}\langle1,2\rangle\big),{\mathcal{N}}_2\big(\vec{\boldsymbol{\mathrm{r}}}\boldsymbol{\cdot}\langle1,2\rangle\big) \big\rangle$ is also a solution to the system $\vec{\boldsymbol{\mathrm{r}}}=\big\langle\boldsymbol{\mathsf{N}}_1\big[\vec{\boldsymbol{\mathrm{r}}}\boldsymbol{\cdot}\overrightarrow{\boldsymbol{\mathrm{H}}}\big],\boldsymbol{\mathsf{N}}_2\big[\vec{\boldsymbol{\mathrm{r}}}\boldsymbol{\cdot}\overrightarrow{\boldsymbol{\mathrm{H}}}\big] \big\rangle$, i.e.,
\begin{equation}\label{n1sn2}
\begin{aligned}
  \small{  \{\vec{\boldsymbol{\mathrm{r}}}\in\mathbb{R}^2|\,\vec{\boldsymbol{\mathrm{r}}}=\big\langle{\mathcal{N}}_1\big(\vec{\boldsymbol{\mathrm{r}}}\boldsymbol{\cdot}\langle1,2\rangle\big),{\mathcal{N}}_2\big(\vec{\boldsymbol{\mathrm{r}}}\boldsymbol{\cdot}\langle1,2\rangle\big) \big\rangle\}\subseteq\{\vec{\boldsymbol{\mathrm{r}}}\in\mathbb{R}^2|\,\vec{\boldsymbol{\mathrm{r}}}=\big\langle\boldsymbol{\mathsf{N}}_1\big[\vec{\boldsymbol{\mathrm{r}}}\boldsymbol{\cdot}\overrightarrow{\boldsymbol{\mathrm{H}}}\big],\boldsymbol{\mathsf{N}}_2\big[\vec{\boldsymbol{\mathrm{r}}}\boldsymbol{\cdot}\overrightarrow{\boldsymbol{\mathrm{H}}}\big] \big\rangle\}. }   
\end{aligned}
\end{equation}
This enables the construction of the following examples~(B5)--(B6) satisfying assumption (A6), which are a bit nontrivial.
\begin{itemize}
    \item[\bf(B5)]  We consider 
 \begin{equation}\label{12N}
    \mathcal{N}_1(s)=-\frac{s}{2^{\rho}},\,\,\quad \mathcal{N}_2(s)=\frac{1+2^{\rho}}{2^{\rho+1}}\left(s-\frac{1+2^{\rho-1}}{1+2^{\rho}}(s-1)(s-2)(s-3)\right).
 \end{equation} 
We will show that for
\begin{equation}\label{rho6}
 \rho \geq6-\frac1p,  
\end{equation}
 the setting of functions in \eqref{12N} satisfies assumption~(A6).

 Solving the system $\vec{\boldsymbol{\mathrm{r}}}=\big\langle{\mathcal{N}}_1\big(\vec{\boldsymbol{\mathrm{r}}}\boldsymbol{\cdot}\langle1,2\rangle\big),{\mathcal{N}}_2\big(\vec{\boldsymbol{\mathrm{r}}}\boldsymbol{\cdot}\langle1,2\rangle\big) \big\rangle$ directly yields the set of solutions $\{\vec{\boldsymbol{\mathrm{r}}}_1=\langle-\frac{1}{2^{\rho}},\frac{1+2^{\rho}}{2^{\rho+1}}\rangle,\vec{\boldsymbol{\mathrm{r}}}_2=2\vec{\boldsymbol{\mathrm{r}}}_1,\vec{\boldsymbol{\mathrm{r}}}_3=3\vec{\boldsymbol{\mathrm{r}}}_1\}$. It is clear that when
 \begin{equation}\label{rho-ga}
\gamma_1=\frac{1+2^{\rho}}2\gamma_2,
 \end{equation}
there holds $\vec{\boldsymbol{\mathrm{r}}}_j\boldsymbol{\cdot}\overrightarrow{\boldsymbol{\mathrm{H}}}=\vec{\boldsymbol{\mathrm{r}}}_j\boldsymbol{\cdot}\langle1,2\rangle$, for $j=1,2,3$. Then by \eqref{N-2j}, this implies that $\vec{\boldsymbol{\mathrm{r}}}_j$ satisfy the system~$\vec{\boldsymbol{\mathrm{r}}}_j=\big\langle\boldsymbol{\mathsf{N}}_1\big[\vec{\boldsymbol{\mathrm{r}}}_j\boldsymbol{\cdot}\overrightarrow{\boldsymbol{\mathrm{H}}}\big],\boldsymbol{\mathsf{N}}_2\big[\vec{\boldsymbol{\mathrm{r}}}_j\boldsymbol{\cdot}\overrightarrow{\boldsymbol{\mathrm{H}}}\big] \big\rangle$ for $j=1,2,3$. Hence, we have checked that \eqref{n1sn2} holds.  A simple calculation yields 
\begin{equation*}
\begin{cases}
 \displaystyle  |{\mathcal{N}}_1'|\equiv\frac1{2^{\rho}},\,\,
|{\mathcal{N}}_2'\big(\vec{\boldsymbol{\mathrm{r}}}_1\boldsymbol{\cdot}\langle1,2\rangle\big)| =|{\mathcal{N}}_2'\big(\vec{\boldsymbol{\mathrm{r}}}_3\boldsymbol{\cdot}\langle1,2\rangle\big)|=\frac1{2^{\rho+1}},\\ 
\displaystyle{\mathcal{N}}_2'\big(\vec{\boldsymbol{\mathrm{r}}}_2\boldsymbol{\cdot}\langle1,2\rangle\big)=\frac{1+3\cdot2^{\rho-1}}{2^{\rho+1}}.
\end{cases}
\end{equation*}
By \eqref{LH} and \eqref{rho-ga}, we  set $\gamma_1$ and $\gamma_2$, and require the property of $f$ as follows:
\begin{equation}\label{b-g}
\begin{cases}
 \displaystyle \gamma_2=-1,~\gamma_1=-\frac{1+2^{\rho}}2~\text{and}~f~\text{satisfies}\\
  \min\limits_{\overline{\Omega}}f=2^{\rho-3}~\&~\max\limits_{\overline{\Omega}}f=2^{\rho-3}+1.   
\end{cases}
\end{equation}
A direct calculation yields $||\overrightarrow{\boldsymbol{\mathrm{H}}}||_{\mathrm{L}^{\infty}(\Omega)}=\frac{3\cdot2^{\rho}+12}{2^{\rho}-8}$. 

Under \eqref{rho6}, we claim that $\boldsymbol{\mathrm{S}}_2\supseteq\{\vec{\boldsymbol{\mathrm{r}}}_1,\vec{\boldsymbol{\mathrm{r}}}_3\}$ and $\mathcal{L}_{\vec{\boldsymbol{\mathrm{r}}}_j}=2^{-\rho}$ for $j=1,3$ satisfy \eqref{LH}.   This can be rigorously demonstrated by noting that $||\overrightarrow{\boldsymbol{\mathrm{H}}}||_{\mathrm{L}^{\infty}(\Omega)}<8$ for $\rho\geq4$, which further implies
$${2^{\frac1p-3}}{||\overrightarrow{\boldsymbol{\mathrm{H}}}||_{\mathrm{L}^{\infty}(\Omega)}^{-1}}-2^{-\rho}>2^{\frac1p-6}-2^{-\rho}\geq0~\text{for}~\rho\geq6-\frac1p.$$ 
Consequently,  $\boldsymbol{\mathrm{S}}_2\supseteq\{\vec{\boldsymbol{\mathrm{r}}}_1,\vec{\boldsymbol{\mathrm{r}}}_3\}$ has at least two distinct elements that satisfy assumption~(A6).
\item[\bf(B6)] As another example, let $\mathcal{N}_1$ be as defined in \eqref{12N}. We replace $\mathcal{N}_2$ from \eqref{12N}  with a periodically perturbed function
\begin{equation*}
  \mathcal{N}_2(s)=\frac{1+2^{\rho}}{2^{\rho+1}}\left(s-\frac{1-2^{\rho}}{1+2^{\rho}}\mathsf{k}\sin\frac{s}{\mathsf{k}}\right), 
\end{equation*}
 where $\mathsf{k}$ is a nonzero constant. Moreover, under assumptions \eqref{rho6}--\eqref{rho-ga} and \eqref{b-g}, it is straightforward to solve the system $\vec{\boldsymbol{\mathrm{r}}}=\big\langle{\mathcal{N}}_1\big(\vec{\boldsymbol{\mathrm{r}}}\boldsymbol{\cdot}\langle1,2\rangle\big),{\mathcal{N}}_2\big(\vec{\boldsymbol{\mathrm{r}}}\boldsymbol{\cdot}\langle1,2\rangle\big) \big\rangle$ and show that the corresponding set 
 $$\boldsymbol{\mathrm{S}}_2\supseteq\{\left\langle-\frac{2\mathsf{z}-1}{2^\rho}\mathsf{k}\pi,\frac{(1+2^\rho)(2\mathsf{z}-1)}{2^{\rho+1}}\mathsf{k}\pi\right\rangle|\,\mathsf{z}\in\mathbb{Z}\}$$
 contains infinitely many elements.
\end{itemize}

\subsection*{Acknowledgement}
This work was supported by the National Science and Technology Council of Taiwan under Grant 114-2115-M-007-013-MY2. The author expresses sincere gratitude to the anonymous referees for the insightful comments that greatly improved the original manuscript.


\begin{thebibliography}{99}
%\footnotesize
\bibitem{AB1996}
{\sc W. Allegretto and A. Barabanova}, 
{\em Positivity of solutions of elliptic equations with nonlocal terms}, 
Proc. Roy. Soc. Edinburgh Sect. A \textbf{126} (1996) 643--663.



 \bibitem{AR1981} {\sc U. Ascher, R.D. Russell}, {\em Reformulation of boundary value problems in ``standard'' form}, SIAM Rev. {\bf 23} (1981) 238--254.
\bibitem{B1987}{\sc H. Bellout}, 
{\em Blow-up of solutions of parabolic equations with nonlinear memory}, 
J. Differential Equations \textbf{70} (1987) 42--68.
\bibitem{BTV1967}
{\sc M. Bertero, G. Talenti and G. A. Viano}, {\em Scattering and bound state solutions for a class of nonlocal potentials (s-wave)}, Comm. Math. Phys. \textbf{6} (1967) 128--150.

\bibitem{BDS1993}
{\sc C. Budd, B. Dold and A. Stuart}, 
{\em Blowup in a partial differential equation with conserved first integral}, 
SIAM J. Appl. Math. \textbf{53} (1993) 718--742.
\bibitem{CR2015}{\sc T. Cecil,  P.J. Ryan}, {\em Geometry of hypersurfaces.} {\bf Springer Monographs in Mathematics.} Springer, New York. xi+596 pp.  (2015).


\bibitem{CLM2024}{\sc X. Chen, C.-C. Lee, M. Mizuno}, {\em Unified asymptotic analysis and numerical simulations of singularly perturbed linear differential equations under various nonlocal boundary effects} (2024), Commun. Math. Sci. {\bf 22} (2024), no. 2, 395--434.

\bibitem{CLW2025}{\sc X. Chen, C.-C. Lee, W. Yang},
{\em Asymptotics and computation for a class of Fredholm integro-differential equations}, Discrete Contin. Dyn. Syst.~{\bf 45} (2025) 1008--1044.

\bibitem{Evans2015}
{\sc L.C. Evans, R. F. Gariepy},
{\em Measure Theory and Fine Properties of Functions},
2nd revised ed., Textbooks in Mathematics, CRC Press, 2015.


\bibitem{FS1998}
{\sc P. Freitas, G. Sweers},
{\rm Positivity results for a nonlocal elliptic equation},
Proc. Roy. Soc. Edinburgh Sect. A: Math. {\bf 128} (1998) 697--715.

\bibitem{FY2015}{\sc G. Freiling and V. Yurko}, {\em Recovering non-selfadjoint differential pencils with nonseparated boundary conditions}, Appl. Anal. {\bf 94}  (2015) 1649--1661.

%\bibitem{FD2017}{\sc A. Foroush Bastani,  D. Damircheli},   {\em An adaptive algorithm for solving stochastic multi-point boundary value problems},  Numer. Algorithms {\bf  74} (2017) 1119--1143. 

%\bibitem{G2012}{\sc F.Z. Geng},  {\em A numerical algorithm for nonlinear multi-point boundary value problems}, J. Comput. Appl. Math. {\bf 236} (2012) 1789--1794.


\bibitem{GT1983} {\sc D. Gilbarg, N.S. Trudinger}, 
{\em Elliptic partial differential equations of second order}, 
Springer-Verlag, New York, Heidelberg, and Berlin (1983).

\bibitem{HL1997} {\sc Q. Han,  F.-H. Lin},  {\em Elliptic Partial Differential Equations, Courant Lecture
Notes, v.1, Amer. Math. Soc.} (1997)

\bibitem{Lee2016}{\sc C.-C. Lee}, {\em Asymptotic analysis of charge conserving Poisson–Boltzmann equations with variable dielectric coefficients}, Discrete Contin. Dyn. Syst. {\bf 36} (2016) 3251--3276. 

\bibitem{Lee2019}{\sc C.-C. Lee}, {\em Thin layer analysis of a non-local model for the double layer structure},  J. Differ. Equ. {\bf 266} (2019) 742--802. 

%\bibitem{Lee2020}{\sc C.-C. Lee}, {\em Nontrivial boundary structure in a Neumann problem on balls with radii tending to infinity}, Ann. Mat. Pura Appl. {\bf 199} (2020), no. 3, 1123--1146.

\bibitem{L2023}{\sc C.-C. Lee}, {\em Uniqueness and asymptotics of singularly perturbed equations involving implicit boundary conditions}, Rev. R. Acad. Cienc. Exactas Fís. Nat. Ser. A Mat. RACSAM {\bf 117} (2023), Paper No. 51, 18 pages.

\bibitem{L2024}{\sc C.-C. Lee}, {\em On Robin problems with infinite-point BCs},  Bull. Lond. Math. Soc. {\bf 57}  (2025), no. 4, 1093--1117.

\bibitem{L2025}{\sc C.-C. Lee}, {\em An existence theorem for elliptic equations with nonlocal boundary conditions},
Bull. Sci. Math. {\bf 207} (2026), 103768.

\bibitem{LM2023}{\sc C.-C. Lee, M. Mizuno, S.-H. Moon}, {\em On the uniqueness of linear convection--diffusion equations with integral boundary conditions}, C. R. Math. Acad. Sci. Paris {\bf 361} (2023) 191--206.

\bibitem{L1837}
{\sc J. Liouville}, 
{\em Solution nouvelle d'un probl\`eme d'analyse, relatif aux ph\'enom\`enes thermom\'ecaniques}, 
J. Math. Pures Appl. \textbf{2} (1837) 439--456.

\bibitem{MF2013}{\sc D. Mantzavinos,  A.S. Fokas}, {\em The unified method for the heat equation: I. Non-separable boundary conditions and non-local constraints in one dimension}, European J. Appl. Math. 24 (2013), no. 6, 857--886.

%\bibitem{M1987}{\sc G.H. Meyer}, {\em Continuous orthonormalization for multipoint problems for linear ordinary differential equations}, SIAM J. Numer. Anal. {\bf 24} (1987) 1288--1300.






\bibitem{N2006}{\sc S.K. Ntouyas}, {\em Nonlocal initial and boundary value problems: a survey.} In Handbook of differential
equations: ordinary differential equations. Vol. {\bf 2} (2006), 461--557. North-Holland. 

%\bibitem{O1968} {\sc R.E. O'Malley Jr.}, {\em Topics in singular perturbations}, Adv. Math. {\bf 2} (1968) 365--470.

%\bibitem{O1991} {\sc R.E. O'Malley Jr.}, {\em Singular Perturbation Methods for Ordinary Differential Equations}, Springer Verlag, 1991.

\bibitem{PS2018}{\sc B. Pelloni , D.A. Smith},  {\em Nonlocal and multipoint boundary value problems for linear evolution equations}, Stud. Appl. Math. {\bf 141} (2018), 46--88.

%\bibitem{PW1984}{\sc M.H. Protter,  H.F. Weinberger}, {\em Maximum principles in differential equations}, Corrected reprint of the 1967 original. Springer-Verlag, New York (1984). 

%\bibitem{R1974}{\sc R.D. Russell}, {\em Collocation for systems of boundary value problems}, Numer. Math. {\bf 23} (1974), 119--133.

%\bibitem{S1998}{\sc P Souplet}, {\em Blow-up in nonlocal reaction–diffusion equations}, SIAM J. Math. Anal., {\bf 29} (1998) 1301--1334.

%\bibitem{S1993}{\sc R. Sperb},  {\em Optimal bounds in semilinear elliptic problems with nonlinear boundary conditions}, Z. Angew. Math. Phys. {\bf 44} (1993) 639--653. 

\bibitem{Sk1997} {\sc A.L. Skubachevskii}  (1997) {\em Elliptic Functional Differential Equations and Applications} (Vol. 91) Birkh\"{a}user.

%\bibitem{TD2012}{\sc M. Tatari, M. Dehghan}, {\em An efficient method for solving multi-point boundary value problems and applications in physics}, J. Vib. Control {\bf 18} (2012) 1116--1124.


%\bibitem{W1959}{\sc A.D. Wentzell}, {\em On boundary conditions for multi-dimensional diffusion processes}, Theory Probab. Appl. {\bf 4} (1959) 164--177.

%\bibitem{U1966}{\sc M. Urabe}, {\em Numerical solution of multi-point boundary value problems in Chebyshev series. Theory of the method}, Numer. Math. {\bf 9} (1966/67) 341--366.

%\bibitem{WZ2009}{\sc J.R.L. Webb, M. Zima}, {\em Multiple positive solutions of resonant and non-resonant nonlocal boundary value problems}, Non. Analysis {\bf 71} (2009) 1369--1378.

%\bibitem{Z1968}{\sc A. Zettl}, {\em Adjoint and self-adjoint boundary value problems with interface conditions}, SIAM J. Appl. Math. {\bf 16} (1968), 851--859.

%\bibitem{ZHZ2007}{\sc Y. Zou, Q. Hu, R. Zhang}, {\em On numerical studies of multi-point boundary value problem and its fold bifurcation}, Appl. Math. Comput. {\bf 185} (2007) 527--537. 

\bibitem{W2011}
{\sc A.M. Wazwaz}, {\em Fredholm Integro-Differential Equations.} In: Linear and Nonlinear Integral Equations. Springer, Berlin, Heidelberg (2011).

\bibitem{W1939}
{\sc H. Weyl}, {\em On the volume of tubes}, \textit{Amer. J. Math.}, {\bf 61} (1939) no. 3, pp. 461--472.
\end{thebibliography}
\end{document}